\newtheorem{theorem}{Theorem}[section]
\newtheorem{lemma}[theorem]{Lemma}
\newtheorem{prop}[theorem]{Proposition}
\newtheorem{cor}[theorem]{Corollary}
\newtheorem{conj}[theorem]{Conjecture}
\theoremstyle{definition}
\newtheorem{definition}[theorem]{Definition}
\numberwithin{equation}{section}
\renewcommand{\mod}[1]{{\ifmmode\text{\rm\ (mod~$#1$)}\else\discretionary{}{}{\hbox{ }}\rm(mod~$#1$)\fi}}
\renewcommand{\L}{{\mathcal L}}
\newcommand{\piup}{\pi}
\begin{document}

\title{Limiting distributions of the classical error terms of prime number theory}

\author[Amir Akbary]{Amir Akbary}
\address{University of Lethbridge, Department of Mathematics and Computer Science, 4401 University Drive, Lethbridge, AB, T1K 3M4, Canada }
\email{amir.akbary@uleth.ca}
\author[Nathan Ng]{Nathan Ng}
\address{University of Lethbridge, Department of Mathematics and Computer Science, 4401 University Drive, Lethbridge, AB, T1K 3M4, Canada }
\email{nathan.ng@uleth.ca}
\author[Majid Shahabi]{Majid Shahabi}
\address{University of Lethbridge, Department of Mathematics and Computer Science, 4401 University Drive, Lethbridge, AB, T1K 3M4, Canada }
\email{m.shahabi@uleth.ca}

\subjclass[2010]{11M06, 11M26}

\thanks{Research for this article  is partially supported by NSERC Discovery grants.}

\keywords{\noindent limiting distribution, prime number theory, explicit formula, almost periodic function}

\begin{abstract}
Let $\phi : [0,\infty) \to \mathbb{R}$ and let $y_0$ be a non-negative constant.  
Let $(\lambda_n)_{n \in \mathbb{N}}$ be a non-decreasing sequence of positive numbers which tends to infinity, let $(r_n)_{n \in \mathbb{N}}$ be a complex sequence,  and $c$ a real number.   Assume that $\phi$ is square-integrable on $[0, y_0]$ and for $y \ge y_0$,  $\phi$ can be expressed as 
\begin{equation*}
\phi(y) = c + \Re \Big( \sum_{\lambda_n \le X}r_ne^{i\lambda_ny} \Big) + \mathcal{E}(y,X) ,
\end{equation*}
for any $X \ge X_0 >0$ where $\mathcal{E}(y,X)$ satisfies 
\begin{equation*}
\lim_{Y\to\infty}\dfrac{1}{Y}\int_{y_0}^Y|\mathcal{E}(y,e^Y)|^2dy=0.
\end{equation*}
We prove that, under certain assumptions on the exponents $\lambda_n$ and the coefficients $r_n$, $\phi(y)$ is a $B^2$-almost periodic function and thus possesses a 
limiting distribution. Also if $\{ \lambda_n \}_{n \in \mathbb{N}}$ is linearly independent over $\mathbb{Q}$, we  
explicitly calculate the Fourier transform of the limiting distribution measure. Moreover, we prove general versions of the above 
results for vector-valued functions. 
Finally, we illustrate some applications of our general theorems 
by applying them to  several classical error 
terms which occur in prime number theory.  Examples include the error term in the prime number theorem 
for an automorphic $L$-function, 
weighted sums of the M\"{o}bius function,
weighted sums of the Liouville function,
the sum of the M\"{o}bius function in an arithmetic progression, and the error term in Chebotarev's density theorem. 
\end{abstract}

\maketitle

\section{Introduction}

In recent years, limiting distributions have played a prominent role in many problems in analytic number theory.
Indeed it is convenient to study number theoretic questions from a probabilistic point of view. 
Limiting distributions have been a useful tool in problems concerning summatory functions \cite{H}, \cite{N}, prime number races \cite{RS}, \cite{FM}, \cite{La2},
and the distribution of values of $L$-functions \cite{Hejhal}, \cite{GS}, \cite{La1}. 
In this article, we shall investigate the limiting distributions associated to some of the classical error terms that occur in prime number theory. 
In 1935, Wintner \cite{Wi1} proved, assuming the Riemann hypothesis (RH), that the function
\begin{equation}
  \label{eq:Wild}
   e^{-y/2} \big(  \psi(e^y)- e^{y} \big) 
\end{equation}
possesses a limiting distribution, where $\psi(x) = \sum_{p^m \le x} \log p$. 
By his method, one may show that on RH
\begin{equation}\label{119}
 ye^{-y/2} \big( \pi(e^y) - \mathrm{Li} (e^y) \big)
\end{equation}
possesses a limiting distribution, where 
$\pi(x)=\sharp \{p\leq x\mid p\mbox{ is a prime}\}$ and
$\mathrm{Li}(x)=\int_2^x \frac{dt}{\log t}$. 
Over the years, other researchers have investigated similar questions for related error terms. 
Let $q >2$ and $a_1, \ldots, a_r$ be reduced residues modulo $q$.  Define $\pi(x;q,a)$ to be the number of primes less than or equal to $x$
which are congruent to $a$ modulo $q$. In 1994, Rubinstein and Sarnak \cite{RS} proved, assuming the generalized Riemann hypothesis for Dirichlet $L$-functions, that the vector-valued function
\begin{equation}
   \label{eq:RSld}
   y e^{-y/2}  \big(
   \varphi(q) \pi(e^y;q,a_1)- \pi(e^y), \ldots,   \varphi(q) \pi(e^y;q,a_r)- \pi(e^y)
  \big)
\end{equation}
possesses a limiting distribution. 
These distributions were employed to give a conditional solution to an old problem known as the Shanks-R\'{e}nyi prime number race game.
In 2004, Ng \cite{N} studied the sum of the M\"{o}bius function.
This arithmetic function is defined by
\begin{equation*}
 \mu(n) = \left\{ \begin{array}{ll}
                  1 & \mbox{if $n=1$,} \\
                  0 & \mbox{if $n$ is not squarefree,} \\
                  (-1)^{k} & \mbox{if $n$ is squarefree
                  and $n= p_{1} \ldots p_{k}$,} 
                  \end{array}
          \right.   
\end{equation*}
and its summatory function is   $M(x) = \sum_{n \le x} \mu(n)$. 
He showed that 
\begin{equation}
   \label{eq:Ngld}
    e^{-y/2} M(e^y)
\end{equation}
possesses a limiting distribution assuming the Riemann hypothesis and the conjectural bound 
\begin{equation*} \label{336}
   \sum_{0 < |\Im(\rho)| \le T} |\zeta'(\rho)|^{-2}  \ll T,
\end{equation*}
where $\zeta(s)$ is the Riemann zeta function and $\rho$ ranges through its non-trivial zeros. 
The common element in the proofs of the existence of a limiting distribution of \eqref{eq:Wild}, \eqref{119}, \eqref{eq:RSld}, and \eqref{eq:Ngld}
is an ``explicit formula''  for each of these functions.  For instance, the truncated explicit formula for $\psi(x)$ is 
\begin{equation*}
   \label{psiexplicit}
  \psi(x) = x - \sum_{\substack{ \zeta(\rho)=0 \\ |\Im(\rho)| \le X}} \frac{x^{\rho}}{\rho} + O \left( \frac{x \log^{2}(xX)}{X} +  
  \log x \right),
\end{equation*}
valid for $x \ge 2$ and $X > 1$ (see \cite[Chapter 17]{Da}). 
On the Riemann hypothesis, it follows that 
\begin{equation}
  \label{psiexplicit2}
  e^{-y/2} \big(  \psi(e^y)- e^{y} \big)= \Re \Bigg( \sum_{\substack{ \rho=\frac{1}{2}+i \gamma \\ 0 < \gamma \le X}} \frac{-2 e^{i y \gamma}}{\rho} \Bigg) + 
O \left( \frac{e^{\frac{y}{2}} \log^{2}(e^y X)}{X} +  y e^{-\frac{y}{2}} \right). 
\end{equation}
Based on this formula Wintner deduced that \eqref{eq:Wild} possesses a limiting distribution. 
In this article, we shall prove a general limiting distribution theorem 
for functions $\phi(y)$, possessing an explicit formula of a particular shape
which is modelled on \eqref{psiexplicit2}.  Our theorem will include the above results as special cases
and we will provide some new examples of functions with limiting distributions.   

We now recall the definition of a limiting distribution for a vector-valued function $\vec{\phi}: [0,\infty) \to \mathbb{R}^\ell$, where $\ell \in \mathbb{N}$.
\begin{definition}\label{4}
We say that a function $\vec{\phi}: [0,\infty) \to \mathbb{R}^\ell$ has a \emph{limiting distribution} $\mu$ on $\mathbb{R}^\ell$ if $\mu$ is a probability measure on $\mathbb{R}^\ell$ and 
\begin{equation*}\label{68}
\lim_{Y\rightarrow\infty}\dfrac{1}{Y}\int_0^Yf\big(\vec{\phi}(y)\big)dy=\int_{\mathbb{R}^\ell} fd\mu
\end{equation*}
for all bounded continuous real functions $f$ on $\mathbb{R}^\ell$.
\end{definition}

We next describe the functions considered in this article. Let $\phi: [0,\infty)\to\mathbb{R}$
and let $y_0$ be a non-negative constant such that $\phi$ is square-integrable on $[0, y_0]$. We shall assume 
there exists  
 $(\lambda_n)_{n \in \mathbb{N}}$, a non-decreasing sequence of positive numbers which tends to infinity, $(r_n)_{n \in \mathbb{N}}$, a complex sequence, and $c$ a real constant  such that for $y \ge y_0$
 \begin{equation} \label{eq:phi}
\phi(y) = c + \Re \Big( \sum_{\lambda_n \le X}r_ne^{i\lambda_ny} \Big) + \mathcal{E}(y,X) ,
\end{equation}
for any $X \ge X_0>0$
where $\mathcal{E}(y,X)$ satisfies 
\begin{equation} \label{9}
\lim_{Y\to\infty}\dfrac{1}{Y}\int_{y_0}^Y|\mathcal{E}(y,e^Y)|^2dy=0.
\end{equation}
\noindent There shall be various conditions imposed on the coefficients $r_n$ and the exponents $\lambda_n$. 

Our approach in proving the limiting distribution of $\phi(y)$ is to show that $\phi(y)$ is a $B^2$-almost periodic function. We say that the real function $\phi(y)$ is a $B^2$-\emph{almost periodic} function if for any $\epsilon>0$ there exists a real-valued trigonometric polynomial $$P_{N(\epsilon)}(y)=\sum_{n=1}^{N(\epsilon)} r_n(\epsilon) e^{i\lambda_n(\epsilon) y}$$ such that
$$\limsup_{Y\rightarrow \infty} \frac{1}{Y} \int_{0}^{Y} |\phi(y)-P_{N(\epsilon)}(y)|^2 dy <\epsilon^2.$$
 
Our main result is the following.  
\begin{theorem}
  \label{lim-dis}
Let $\phi: [0,\infty) \to \mathbb{R}$ satisfy \eqref{eq:phi} and \eqref{9}. 
Let $\alpha,\beta>0$, and $\gamma\geq 0$. Assume either of the following conditions:\\
(a) $\beta>1/2$ and
\begin{equation}\label{SI}
\sum_{T < \lambda_n \le T+1}|r_n|\ll\dfrac{(\log T)^\gamma}{T^\beta}
\end{equation}
for $T>0$. \\
(b) $\beta\leq \min\{1, \alpha\}$, $\alpha^2+\alpha/2<\beta^2+\beta$, and
\begin{equation}\label{AI}
\sum_{S <\lambda_n \le T}|r_n|\ll\dfrac{(T-S)^\alpha(\log T)^\gamma}{S^\beta}
\end{equation}
for $T>S>0$.\\
Then $\phi(y)$ is a $B^2$-almost periodic function and therefore possesses a limiting distribution.

\end{theorem}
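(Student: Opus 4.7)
The plan is to use the partial sums of the explicit formula \eqref{eq:phi} themselves as the approximating trigonometric polynomials:
\[
P_N(y) := c + \Re \sum_{\lambda_n \le N} r_n e^{i \lambda_n y}.
\]
To verify $B^2$-almost periodicity I would show that $\limsup_{Y \to \infty} Y^{-1} \int_0^Y |\phi(y) - P_N(y)|^2 \, dy \to 0$ as $N \to \infty$. Because $\phi$ is square-integrable on $[0, y_0]$ and $P_N$ is bounded, the integral over $[0, y_0]$ contributes only $O(Y^{-1})$; for $y \in [y_0, Y]$ I invoke \eqref{eq:phi} with the specific choice $X = e^Y$ to write $\phi(y) - P_N(y) = \Re \sum_{N < \lambda_n \le e^Y} r_n e^{i \lambda_n y} + \mathcal{E}(y, e^Y)$. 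The contribution of $\mathcal{E}(y, e^Y)$ to the mean square is absorbed directly by hypothesis \eqref{9}, reducing everything to controlling
\[
\mathcal{M}_N(Y) := \frac{1}{Y} \int_{y_0}^Y \Big| \sum_{N < \lambda_n \le e^Y} r_n e^{i \lambda_n y} \Big|^2 dy
\]
and showing $\limsup_Y \mathcal{M}_N(Y) \to 0$ as $N \to \infty$.

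Expanding the square splits $\mathcal{M}_N(Y)$ into a diagonal plus an off-diagonal contribution. The diagonal equals $(1 - y_0/Y) \sum_{N < \lambda_n \le e^Y} |r_n|^2$, and so is controlled as soon as $\sum_n |r_n|^2 < \infty$. In case (a) this is immediate since $\sum_{T < \lambda_n \le T+1} |r_n|^2 \le (\sum_{T < \lambda_n \le T+1} |r_n|)^2 \ll (\log T)^{2\gamma} T^{-2\beta}$ and $2 \beta > 1$. In case (b), the algebraic inequality $\alpha^2 + \alpha/2 < \beta^2 + \beta$ combined with $\beta > 0$ forces $\alpha < 2 \beta$ (otherwise $\alpha^2 + \alpha/2 \ge 4 \beta^2 + \beta > \beta^2 + \beta$); then a dyadic application of \eqref{AI} together with the pointwise estimate $|r_n| \ll (\log \lambda_n)^\gamma \lambda_n^{-\beta}$, obtained from \eqref{AI} on a unit window, yields $\sum_{\lambda_n \in (2^k, 2^{k+1}]} |r_n|^2 \ll k^{2\gamma} 2^{k(\alpha - 2 \beta)}$, which is summable in $k$.

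The off-diagonal terms are the heart of the matter. Using the elementary bound $|\int_{y_0}^Y e^{i(\lambda_m - \lambda_n) y} dy| \le \min(Y - y_0, \, 2/|\lambda_m - \lambda_n|)$, they are dominated by
\[
\sum_{\substack{m \ne n \\ N < \lambda_m, \lambda_n \le e^Y}} |r_m| |r_n| \min \Big( 1, \frac{2}{Y |\lambda_m - \lambda_n|} \Big).
\]
I would cut this sum at a threshold $|\lambda_m - \lambda_n| \le A(Y)$: on the close-pair range one uses $\min \le 1$ together with \eqref{SI} or \eqref{AI} applied to short windows, while on the far range one uses the $2/(Y|\lambda_m - \lambda_n|)$ bound together with a double dyadic decomposition (on $\lambda_n$ and on the gap $|\lambda_m - \lambda_n|$), estimating each block by \eqref{SI} in case (a) or by \eqref{AI} in case (b). In case (a) the exponent $\beta > 1/2$ provides absolute convergence of the resulting double sum; in case (b) the finer inequality $\alpha^2 + \alpha/2 < \beta^2 + \beta$ is exactly what makes the diagonal dyadic block $[2^k, 2^{k+1}] \times [2^k, 2^{k+1}]$ contribute a quantity summable in $k$. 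Carrying out this off-diagonal bookkeeping, especially under the weaker information of case (b), is the main technical obstacle. Once it is completed, $\phi$ is $B^2$-almost periodic by definition, and the existence of a limiting distribution follows from the standard correspondence between $B^2$-almost periodic functions and their distribution functions, a Parseval-type result that the paper will presumably establish as a preliminary lemma.
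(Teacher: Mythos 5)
Your overall strategy is the same as the paper's: take the truncated explicit formula $P_N(y)=c+\Re\sum_{\lambda_n\le N}r_ne^{i\lambda_ny}$ as the approximating polynomial, choose $X=e^Y$ so that \eqref{9} kills the error, and reduce everything to showing that $\limsup_YY^{-1}\int_{y_0}^Y|\sum_{N<\lambda_n\le e^Y}r_ne^{iy\lambda_n}|^2\,dy$ is small when $N$ is large. The paper organizes this reduction slightly differently --- it first proves a \emph{unit-interval} bound $\int_V^{V+1}|\sum_{T<\lambda_n\le X}r_ne^{iy\lambda_n}|^2\,dy\ll(\log T)^\delta/T^\eta$ uniform in $V$ and $X$, and then sums over $\approx Y$ unit subintervals of $[y_0,Y]$ --- but once you drop the $1/Y$ from your $\min(1,2/(Y|\lambda_m-\lambda_n|))$ the two reductions are equivalent, and the $Y$-dependent threshold $A(Y)$ in your sketch is an unnecessary complication: a fixed threshold at gap $1$ suffices. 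Your treatment of the diagonal is complete and correct in both cases; in case (b) the observation that $\alpha^2+\alpha/2<\beta^2+\beta$ forces $\alpha<2\beta$, together with $|r_n|\ll(\log\lambda_n)^\gamma\lambda_n^{-\beta}$ from a unit window of \eqref{AI}, does give $\sum_n|r_n|^2<\infty$.

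The genuine gap is that you stop exactly where the proof actually starts. The off-diagonal estimate under hypothesis (b) is the entire technical content of the theorem; you correctly identify it as ``the main technical obstacle'' but do not carry it out. In the paper this is done by writing $\Sigma_2\le\sum_{\lambda_n>T}|r_n|S_T(\lambda_n)$ with $S_T(U)=\sum_{\lambda_m>T,\,|U-\lambda_m|\ge1}|r_m|/|U-\lambda_m|$, cutting the range of $\lambda_m$ at $U^c,U-U^c,U-1,U+1,U+U^c,2U$, bounding each of the six pieces via \eqref{AI}, optimizing $c=(\alpha+\beta)/(\alpha+\beta+1)$, and then checking that the resulting exponent $\xi=(\beta^2-\alpha^2+\beta)/(\alpha+\beta+1)$ exceeds $\alpha-\beta$ precisely when $\alpha^2+\alpha/2<\beta^2+\beta$; that final inequality is then fed into the partial-summation bound \eqref{partial}. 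None of this appears in your proposal, and the assertion that the inequality $\alpha^2+\alpha/2<\beta^2+\beta$ is ``exactly what makes the diagonal dyadic block summable'' is not substantiated (and, as stated, is not quite what a dyadic gap decomposition requires). A further, minor, deviation from the paper: in case (a) the paper avoids the bilinear expansion entirely by invoking Gallagher's large-sieve lemma to control $\int_{-U}^U|\sum_nc_ne^{2\pi i\nu_nx}|^2\,dx$, which is slicker than re-running the close/far split under hypothesis \eqref{SI}. Your dyadic plan for the off-diagonal is plausible and, if executed carefully, would in fact be a clean alternative to the paper's $S_T(U)$ argument, but as written the proposal is a sketch of a proof rather than a proof.
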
 
\noindent   In Theorem \ref{lim-dis}, we  prove that the conditions on $\phi$ imply that it
is a $B^2$-almost periodic function. However, as it is known that $B^2$-almost periodic functions possess limiting distributions
(see \cite[Theorem 8.3]{Wi2a} and Theorem \ref{340} in this article), we also obtain that $\phi$ possesses a limiting distribution. 
It would be interesting to determine the weakest conditions on the coefficients $(r_n)_{n \in \mathbb{N}}$ and the exponents $(\lambda_n)_{n \in \mathbb{N}}$ which imply that $\phi$ is $B^2$-almost periodic. 

Note that in part (b), the conditions $\beta \le \alpha$ and $\alpha^2+\alpha/2<\beta^2+\beta$ are equivalent to
\begin{equation}
  \label{alphacond}
  \beta \le \alpha < \sqrt{\beta^2+\beta+\frac{1}{16}}-\frac{1}{4}.
\end{equation}
The next corollary provides simpler criteria for which $\phi$ possesses a limiting distribution.
\begin{cor}\label{11}
Let $\phi: [0,\infty) \to \mathbb{R}$ satisfy \eqref{eq:phi} and \eqref{9}. 

Assume either of the following conditions:\\
(a) $r_n \ll \lambda_n^{-\beta}$ for $\beta>\frac{1}{2}$, and 
\begin{equation}\label{lambda}
\sum_{T<\lambda_n \le T+1}1\ll\log T.
\end{equation}
(b) $0\leq \theta<3-\sqrt{3}$, \eqref{lambda}, and
\begin{equation}\label{sec-mom}
\sum_{\lambda_n\leq T}\lambda_n^2|r_n|^2\ll T^\theta.
\end{equation}
Then $\phi(y)$ is a $B^2$-almost periodic function and therefore possesses a limiting distribution.
\end{cor}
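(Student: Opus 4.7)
The plan is to deduce Corollary~\ref{11} directly from Theorem~\ref{lim-dis} by verifying, in each part, the corresponding hypothesis (a) or (b) of the theorem.

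Part~(a) should be essentially immediate. From $r_n \ll \lambda_n^{-\beta}$ with $\beta > 1/2$ together with the density bound~\eqref{lambda}, summing over a single unit interval gives
\begin{equation*}
\sum_{T < \lambda_n \le T+1} |r_n| \ll T^{-\beta} \sum_{T < \lambda_n \le T+1} 1 \ll \frac{\log T}{T^{\beta}},
\end{equation*}
which is exactly condition~\eqref{SI} with $\gamma=1$, so Theorem~\ref{lim-dis}(a) applies and $\phi$ is $B^2$-almost periodic.

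Part~(b) is where the work lies: one has to convert the second-moment hypothesis~\eqref{sec-mom} into the first-moment bound~\eqref{AI}. Setting $A(t):=\sum_{\lambda_n\le t}\lambda_n^2|r_n|^2$, so that $A(t)\ll t^\theta$, I would apply Abel summation to the identity $|r_n|^2 = (\lambda_n^2|r_n|^2)\lambda_n^{-2}$. Since $\theta<3-\sqrt{3}<2$, the resulting integral $\int_S^T t^{\theta-3}\,dt$ is controlled by $S^{\theta-2}$, yielding the tail estimate
\begin{equation*}
\sum_{S<\lambda_n\le T}|r_n|^2 \ll S^{\theta-2}.
\end{equation*}
Combining this with Cauchy--Schwarz and the counting bound $\#\{n:S<\lambda_n\le T\} \ll (T-S+1)\log T$ (which follows from summing~\eqref{lambda} over unit subintervals) produces
\begin{equation*}
\sum_{S<\lambda_n\le T}|r_n| \ll \frac{(T-S+1)^{1/2}(\log T)^{1/2}}{S^{1-\theta/2}}.
\end{equation*}

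It then remains to choose $\alpha,\beta$ so that this bound matches~\eqref{AI} and the constraints $\beta\le\min(1,\alpha)$ and $\alpha^2+\alpha/2<\beta^2+\beta$ both hold. For $\theta\in[0,1]$ I would take $\alpha=\beta=1-\theta/2$, replacing $(T-S)^{1/2}$ by $(T-S)^{1-\theta/2}$ on the range $T-S\ge 1$; here the strict inequality reduces to $\alpha>0$, which is clear. For $\theta\in[1,3-\sqrt{3})$ I would take $\alpha=1/2$ and $\beta=1-\theta/2\le 1/2=\alpha$. The main obstacle, and the point at which the numerical value $3-\sqrt{3}$ actually enters, is verifying $\alpha^2+\alpha/2<\beta^2+\beta$ in this second regime: with these values it becomes $\tfrac12<2-\tfrac{3\theta}{2}+\tfrac{\theta^2}{4}$, i.e., $\theta^2-6\theta+6>0$, whose roots are $3\pm\sqrt{3}$. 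This is exactly what accounts for the upper bound $\theta<3-\sqrt{3}$ in the hypothesis of Corollary~\ref{11}(b).
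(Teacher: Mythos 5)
Your argument is correct and matches the paper's proof in all essentials. Part~(a) is identical: combine $r_n \ll \lambda_n^{-\beta}$ with \eqref{lambda} to get \eqref{SI} with $\gamma=1$. In part~(b), your partial-summation step $\sum_{\lambda_n > S}|r_n|^2 \ll S^{\theta-2}$ and the subsequent Cauchy--Schwarz producing $\sum_{S<\lambda_n\le T}|r_n| \ll (T-S)^{1/2}(\log T)^{1/2}/S^{1-\theta/2}$ (for $T-S\gtrsim 1$, which is the regime actually used in the proof of Theorem~\ref{lim-dis}) are exactly the paper's computations, and for $\theta\in[1,3-\sqrt 3)$ you make the same choice $\alpha=1/2$, $\beta=1-\theta/2$, $\gamma=1/2$, with the same quadratic $\theta^2-6\theta+6>0$ pinning down the constant $3-\sqrt 3$. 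The only divergence is the treatment of $\theta\in[0,1)$: the paper simply observes that \eqref{sec-mom} with $\theta<1$ implies \eqref{sec-mom} with $\theta=1$ (since $T^\theta\le T$), and then invokes the $\theta=1$ case, whereas you re-choose $\alpha=\beta=1-\theta/2$ and use $(T-S)^{1/2}\le (T-S)^{1-\theta/2}$. Both routes are valid; the paper's is a one-line reduction while yours keeps the parameters explicit in terms of $\theta$, but they are morally the same argument.
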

Part (a) of this corollary is useful to apply when the $r_n$'s satisfy the nice bound $r_n \ll \lambda_n^{-\beta}$ where $\beta >1/2$. 
The existence of limiting distributions for \eqref{eq:Wild} and \eqref{eq:RSld} may be deduced from this case.  
If we assumed RH and  $|\zeta'(\rho)|^{-1} \ll |\rho|^{\frac{1}{2}-\varepsilon}$, then part (a) implies that 
\eqref{eq:Ngld} possesses a limiting distribution.  

On the other hand, if the $r_n$'s oscillate significantly, then by part (b) of the above corollary it suffices to have a modest
bound for the second moment of $\lambda_n |r_n|$.  

More generally, we prove a version of Theorem \ref{lim-dis} for vector-valued functions whose components are of the type $\phi(y)$. 
For instance, let $\vec{\phi}: [0,\infty) \to \mathbb{R}^\ell$ be given by 
\begin{equation}
   \label{eq:vecphi} 
   \vec{\phi}(y) = \big( \phi_1(y), \ldots, \phi_{\ell}(y)\big),
\end{equation}
where each $\phi_{k}(y)$ is of the shape \eqref{eq:phi}. Then we have the following.
\begin{theorem}\label{117}
Suppose that the conditions of Theorem \ref{lim-dis} or Corollary \ref{11} hold for each $\phi_k(y)$ for $1 \le k \le \ell$. Then $\vec{\phi}(y)$ possesses a limiting distribution.
\end{theorem}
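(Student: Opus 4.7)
The plan is to reduce Theorem~\ref{117} to the scalar Theorem~\ref{lim-dis} via the Cram\'er--Wold / L\'evy device, exploiting the fact that every real linear combination of the components $\phi_k$ is itself $B^2$-almost periodic. By hypothesis and Theorem~\ref{lim-dis}, each $\phi_k$ is $B^2$-almost periodic, so for every $\epsilon>0$ and every $k\in\{1,\dots,\ell\}$ there is a real trigonometric polynomial $P_k^{(\epsilon)}(y)$ with $\limsup_{Y\to\infty}\tfrac{1}{Y}\int_0^Y|\phi_k(y)-P_k^{(\epsilon)}(y)|^2\,dy<\epsilon^2$.

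First I would show that for every fixed $\vec{t}=(t_1,\dots,t_\ell)\in\mathbb{R}^\ell$ the mean
$$\hat F(\vec{t}):=\lim_{Y\to\infty}\frac{1}{Y}\int_0^Y e^{i\,\vec{t}\cdot\vec{\phi}(y)}\,dy$$
exists. The scalar function $\vec{t}\cdot\vec{\phi}=\sum_{k} t_k\phi_k$ is a finite real linear combination of $B^2$-almost periodic functions, and the $L^2$ triangle inequality applied to the combined trigonometric polynomial $\sum_{k} t_k P_k^{(\epsilon/(\ell(|t_k|+1)))}$ shows that $\vec{t}\cdot\vec{\phi}$ is again $B^2$-almost periodic. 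Theorem~\ref{340} then furnishes a scalar limiting distribution for $\vec{t}\cdot\vec{\phi}$, giving in particular the mean $\hat F(\vec{t})$.

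Next I would establish continuity of $\hat F$ at the origin, which by L\'evy's continuity theorem upgrades the pointwise convergence of the characteristic functions $\hat\mu_Y(\vec{t}):=\tfrac{1}{Y}\int_0^Y e^{i\,\vec{t}\cdot\vec{\phi}(y)}\,dy$ of the empirical measures $\mu_Y$ on $\mathbb{R}^\ell$ to weak convergence. Writing $\vec{P}^{(\epsilon)}:=(P_1^{(\epsilon)},\dots,P_\ell^{(\epsilon)})$ and using $|e^{ia}-e^{ib}|\le|a-b|$ together with the Cauchy--Schwarz inequality,
$$\Bigl|\hat\mu_Y(\vec{t})-\tfrac{1}{Y}\int_0^Y e^{i\,\vec{t}\cdot\vec{P}^{(\epsilon)}(y)}\,dy\Bigr|\le|\vec{t}|\Bigl(\tfrac{1}{Y}\int_0^Y|\vec{\phi}-\vec{P}^{(\epsilon)}|^2\,dy\Bigr)^{1/2},$$
whose $\limsup_{Y\to\infty}$ is at most $\sqrt{\ell}\,|\vec{t}|\,\epsilon$. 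The approximating polynomial integral converges, as $Y\to\infty$, to a finite sum of exponentials in $\vec{t}$, which is analytic in $\vec{t}$ and hence uniformly continuous on compact sets. Thus $\hat F$ is a uniform-on-compacta limit of continuous functions, so $\hat F$ is continuous on $\mathbb{R}^\ell$; in particular $\hat F(\vec{t})\to 1$ as $\vec{t}\to\vec 0$. L\'evy's theorem then produces a probability measure $\mu$ on $\mathbb{R}^\ell$ with $\hat\mu=\hat F$ and $\mu_Y\Rightarrow\mu$, which is the desired limiting distribution for $\vec\phi$.

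The only non-routine point is the tightness-and-continuity step above; once it is in place, the passage from the scalar Theorem~\ref{lim-dis} to the vector statement is formal. In fact the same bookkeeping shows the stronger conclusion that $\vec{\phi}$ is a vector-valued $B^2$-almost periodic function, approximated componentwise by trigonometric polynomials whose frequencies lie in the common finite union of the sets occurring in the scalar approximations of the $\phi_k$.
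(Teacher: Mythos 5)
Your proof is correct but follows a genuinely different route from the paper's. The paper reduces Theorem~\ref{117} in one line: by Theorem~\ref{lim-dis} each component $\phi_k$ is $B^2$-almost periodic, hence (by Definition~\ref{2}) $\vec{\phi}$ is vector-valued $B^2$-almost periodic, and Theorem~\ref{340} (proved by approximating with trigonometric polynomials, establishing tightness of the empirical measures $\nu_Y$, and invoking Helly's selection theorem together with the Portmanteau lemma) does the rest. You instead re-derive the needed half of Theorem~\ref{340} through the Cram\'er--Wold/L\'evy device: the scalar almost-periodicity of every projection $\vec{t}\cdot\vec{\phi}$ yields pointwise convergence of the empirical characteristic functions, and the uniform-on-compacta $L^2$ approximation by $\vec{P}^{(\epsilon)}$ makes the limit $\hat F$ continuous at the origin, so L\'evy's multivariate continuity theorem supplies the limiting measure. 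The two arguments carry the same mathematical content --- your continuity-at-$0$ step is precisely the Fourier-analytic dual of the paper's tightness estimate --- but your framing is more probabilistic, bypasses Helly's selection and Portmanteau explicitly, and produces $\hat\mu = \hat F$ directly, which usefully foreshadows the computation in Theorem~\ref{66}. One small inaccuracy: $\lim_{Y\to\infty}\tfrac{1}{Y}\int_0^Y e^{i\,\vec{t}\cdot\vec{P}^{(\epsilon)}(y)}\,dy$ is not in general a finite sum of exponentials in $\vec{t}$; by Kronecker--Weyl it equals $\int_A e^{i\,\vec{t}\cdot X(a)}\,d\omega(a)$ over a subtorus $A$, which is an entire (in particular continuous) function of $\vec{t}$ by dominated convergence --- so the conclusion you need still holds, but the description should be corrected. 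Also note that your closing remark (that $\vec{\phi}$ is vector-valued $B^2$-almost periodic) is not a byproduct of the argument but rather its first step, exactly as in the paper.
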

\noindent This theorem contains as special cases the results of Wintner, Rubinstein and Sarnak, and Ng. That is, the functions in equations \eqref{eq:Wild}, \eqref{119}, \eqref{eq:RSld},
and \eqref{eq:Ngld} possess limiting distributions.

We also provide several new examples of functions which have limiting distributions. 
These functions are now described.

Let $\piup$  be an irreducible unitary cuspidal automorphic representation of ${\rm GL}_d(\mathbb{A}_{\mathbb{Q}})$, and let $L(s, \piup)$ be the automorphic $L$-function attached to $\piup$. We have
$$L(s, \piup)=\prod_{p < \infty} L_p(s, \piup_p),$$
where $$L_p(s, \piup_p)=\prod_{j=1}^{d} \left(1-\frac{\alpha_\piup (p, j)}{p^s} \right)^{-1},$$
for $\Re(s)>1$.  The completed $L$-function $\Phi(s, \piup)$ is defined by
$$\Phi(s, \piup)=L_\infty(s, \piup_\infty) L(s, \piup),$$
where  the Archimedean local factor is
$$L_\infty(s, \piup_\infty)=\prod_{j=1}^{d} \Gamma_\mathbb{R} (s+\mu_\piup(j))$$
and $\Gamma_\mathbb{R} (s)=\pi^{-s/2} \Gamma(s/2)$ where $\Gamma$ is the classical gamma function.
For $1\leq j\leq d$, the complex numbers $\alpha_\piup (p, j)$ and $\mu_\piup(j)$ are called the \emph{local parameters}. It is known that $\Phi(s, \piup)$ is entire (except in the case $L(s, \piup)=\zeta(s-i\tau_0)$ for $\tau_0\in \mathbb{R}$, which in this case $\Phi(s, \piup)$ has two simple poles) and satisfies the functional equation
$$\Phi(s, \piup)=\epsilon(s, \piup) \Phi(1-s, \tilde{\piup}),$$
with 
$$\epsilon(s, \piup)=\epsilon_\piup Q_\piup^{1/2-s},$$
where $Q_\piup\geq 1$ is an integer called the conductor of $\piup$, $\epsilon_\piup$ is the root number satisfying $|\epsilon_\piup|=1$, and $\tilde{\piup}$ is the representation contragredient to $\piup$. 
It is expected that all non-trivial zeros of $L(s, \piup)$ are located on the line $\Re(s)=1/2$ and this is 
known as the generalized Riemann hypothesis for $L(s, \piup)$.

We now consider prime counting functions associated to $L(s, \pi)$. 
Let 
\begin{equation}
\label{a_pi}
a_\piup(p^k)=\sum_{j=1}^{d} \alpha_\piup(p, j)^k,
\end{equation}
and define $$\psi(x, \piup)=\sum_{n\leq x} \Lambda(n) a_\piup(n),$$
where $\Lambda(n)$ is the classical von Mangoldt function. We have, for $\Re(s)>1$, 
$$-\frac{L^\prime(s, \piup)}{L(s, \piup)}=\sum_{n=1}^{\infty} \frac{\Lambda(n) a_\piup(n)}{n^s}.$$
The prime number theorem for $L(s, \piup)$ (see \cite[Theorem 2.3]{LY2}) is the assertion that
$$\psi(x, \piup)=\delta(x, \piup)+O(x\exp(-c\sqrt{\log{x}}))$$
for some positive constant $c$, where 
$$\delta(x, \piup)= \left\{  \begin{array}{ll} \frac{x^{1+i\tau_0}}{1+i\tau_0}&{\rm if}~L(s, \piup)=\zeta(s-i\tau_0),\\ 0&{\rm otherwise}.\end{array} \right.$$
From Corollary \ref{11} (a) we are able to deduce that a scaled version of the above error term
possesses a limiting distribution.

\begin{cor}
\label{339}
Under the assumption of the generalized Riemann hypothesis for $L(s, \piup)$ the function
$$E_1(y, \piup) = e^{-y/2}\left(\psi(e^y, \piup)-\delta(e^y, \piup)\right)$$
has a limiting distribution.
\end{cor}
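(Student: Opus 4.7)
The plan is to apply Corollary \ref{11}(a) to $E_1(y,\piup)$. The key ingredient is the standard truncated explicit formula for $\psi(x,\piup)$, which under the generalized Riemann hypothesis reads
\begin{equation*}
\psi(x,\piup) = \delta(x,\piup) - \sum_{|\gamma|\le X}\frac{x^{1/2+i\gamma}}{\tfrac{1}{2}+i\gamma} + O\!\left(\frac{x\log^2(xX Q_\piup)}{X}+\log x\right),
\end{equation*}
valid for $x\ge 2$ and $X\ge 2$, where $\rho=\tfrac{1}{2}+i\gamma$ runs over the non-trivial zeros of $L(s,\piup)$. Substituting $x=e^y$ and dividing by $e^{y/2}$ places $E_1(y,\piup)$ in the form \eqref{eq:phi} with $c=0$. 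When $\piup$ is self-dual, the zeros pair as $\{\rho,\bar\rho\}$, yielding $\lambda_n=\gamma_n>0$ and $r_n=-2/\rho_n$; in the general case one regards $E_1$ as the pair $(\Re E_1,\Im E_1)$ and appeals to Theorem \ref{117}, using the analogous formula for $\psi(x,\tilde\piup)$ to identify the exponents and coefficients of each real component.

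The two hypotheses of Corollary \ref{11}(a) now follow from well-known facts about zeros of automorphic $L$-functions. Since $|r_n|=2/|\rho_n|\ll 1/\lambda_n$, one may take $\beta=1>1/2$. The density bound \eqref{lambda} is immediate from the classical estimate $N(T+1,\piup)-N(T,\piup)\ll \log(T Q_\piup d)$ for the number of non-trivial zeros of $L(s,\piup)$ with imaginary part in $(T,T+1]$.

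It remains to verify the mean-square condition \eqref{9}. Setting $X=e^Y$ in the error term above and using $\log(e^y\cdot e^Y Q_\piup)\ll y+Y+1$ for fixed $\piup$, one obtains
\begin{equation*}
|\mathcal{E}(y,e^Y)|^2 \ll e^y(y+Y)^4 e^{-2Y} + y^2 e^{-y}.
\end{equation*}
Integration in $y$ from $y_0$ to $Y$ yields a bound $O(Y^4 e^{-Y})+O(1)$, so dividing by $Y$ produces a quantity tending to zero. The only genuine (and mild) technical point is locating a version of the explicit formula with quantitative error term of the displayed shape, including the correct dependence on $Q_\piup$ and $d$; this is standard for automorphic $L$-functions and is already exploited in, e.g., \cite{LY2}. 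Granting this, all hypotheses of Corollary \ref{11}(a) hold and the existence of a limiting distribution for $E_1(y,\piup)$ follows.
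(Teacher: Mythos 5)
Your argument is correct and runs parallel to the paper's, with two notable differences. You apply Corollary \ref{11}(a) via the pointwise bound $|r_n| = 2/|\rho_n| \ll \lambda_n^{-1}$ together with the density estimate $N(T+1,\piup)-N(T,\piup)\ll\log T$, whereas the paper applies Corollary \ref{11}(b) by first observing $\sum_{\lambda_n\le T}\lambda_n^2|r_n|^2 = \sum_{0<\gamma\le T}4\gamma^2/|\rho|^2 \ll T\log T \ll T^\theta$ for any $1<\theta<3-\sqrt 3$; both routes go through, and (a) is arguably the more natural one here since the coefficients decay deterministically in $\lambda_n$ (the paper itself flags (a) as what handles Wintner's classical case). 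You are also more careful than the paper about the fact that $\psi(x,\piup)$ need not be real-valued when $\piup$ is not self-dual, proposing to pass to $(\Re E_1, \Im E_1)$ via Theorem \ref{117} using the zeros of both $L(s,\piup)$ and $L(s,\tilde\piup)$; the paper silently ``pairs conjugate zeros,'' which is only literally valid in the self-dual case, so your treatment is cleaner on this point. One small issue in your write-up: the quoted error term $O\big(x\log^2(xXQ_\piup)/X + \log x\big)$ for the truncated explicit formula is the form valid under the Ramanujan--Petersson bound $\theta=0$. Without it, one only has the Rudnick--Sarnak bound $|a_\piup(n)|\le dn^\theta$ with some $\theta<1/2$, and the paper's Proposition \ref{801} (proved via Perron and a contour shift) correspondingly carries the larger terms $x^{\theta+1}\log^2 x/T$ and $x^\theta\log x$. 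This does not affect your conclusion --- after dividing by $e^{y/2}$ the worst surviving term is $e^{y(\theta-1/2)}$-sized and still vanishes in mean square since $\theta<1/2$ --- but the explicit formula should be quoted in the $\theta$-corrected form as in Proposition \ref{801}.
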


Note that Wintner's theorem \eqref{eq:Wild} is a special case of the above corollary. 
In addition, for a modular newform $f$ of weight $k$ and level $N$, 
we conclude, under the assumption of the generalized Riemann hypothesis, that $e^{-y/2}\psi(e^y,f)$ has a limiting distribution.

We now introduce several other functions that possess limiting distributions.  These functions are related to certain negative moments 
of the derivative of  an $L$-function evaluated at its zeros. The first case to consider is the Riemann zeta function.
Gonek  \cite{Gonek} and Hejhal \cite{Hejhal} studied $J_{-1}(T) = \sum_{0<\gamma\leq T} |\zeta'(\rho)|^{-2}$ and Gonek 
conjectured that  
\[
    J_{-1}(T) \sim \frac{3}{\pi^3} T.
\]  
Assuming the Riemann hypothesis and all zeros of $\zeta(s)$ are simple, 
Milinovich and Ng \cite{MN} proved that
$J_{-1}(T)\geq(\frac{3}{2 \pi^3}-\varepsilon)T$ for every $\varepsilon >0$ and $T$ sufficiently large.
In our work,  we make the weaker assumption
\begin{equation} \label{330}
   J_{-1}(T) \ll T^{\theta} \text{ with } 1 \le \theta <  3-\sqrt{3}.  
\end{equation}
Currently,  assuming the Riemann hypothesis
and the simplicity of zeros of $\zeta(s)$, no upper bounds are known for $J_{-1}(T)$. However, the weak Mertens conjecture, the assumption
that
$$\int_1^X\Big( \frac{M(x)}{x} \Big)^2dx\ll\log X,$$
implies   
$|\zeta'(\rho)|^{-1}\ll|\rho|$ and thus $J_{-1}(T)\ll T^{3+\varepsilon}$ (see \cite[p. 377, eq. (14.29.4)]{T}).

We also require a version of \eqref{330} for Dirichlet $L$-functions $L(s,\chi)$.
We assume there exists a positive $\theta$
such that
\begin{equation}\label{135}
 \sum_{\chi \text{ mod } q}
\sum_{\substack{ 0 < |\Im(\rho_{\chi})| \le T \\ L(\rho_{\chi},\chi)=0}} |L'(\rho_{\chi},\chi)|^{-2} \ll_{q} T^{\theta} \text{ where } 1 \leq\theta <  3 -\sqrt{3}.
\end{equation}
It seems plausible that such a bound holds and it is natural to conjecture there is a positive constant $C_{q}$ such that
$$
\sum_{\chi \text{ mod } q} 
\sum_{ \substack{ 0 < |\Im(\rho_{\chi})| \le T \\ L(\rho_{\chi},\chi)=0}} |L'(\rho_{\chi},\chi)|^{-2}\sim C_{q} T.$$  
In fact, we can prove that this sum is greater than a positive constant times $T$,
assuming that all zeros of $L(s,\chi)$ are  simple and lie on the  critical line.  Finally, observe that \eqref{330} implies that 
all zeros of $\zeta(s)$ are simple and \eqref{135} implies that all nonreal zeros of the $L(s,\chi)$ are simple.   
We make use of these facts in our applications.

We shall introduce several other summatory functions. For $\alpha\in[0,1]$ and $x>0$, we set 
\begin{equation*}\label{114}
M_\alpha(x)=\sum_{n\leq x}\dfrac{\mu(n)}{n^\alpha}.
\end{equation*}
Over the years, there has been significant interest in these functions.
For instance, Landau showed in his Ph.D. thesis that $M_1(x)$ converges to 0. 
In 1897 Mertens conjectured that $M_0(x)=M(x)$ is bounded in absolute value by $\sqrt{x}$.  
This conjecture implies the Riemann hypothesis. 
Many researchers studied the size of $M_0(x)$.   Finally, in 1985, Odlyzko and te Riele \cite{Odlyzko} showed that Mertens' conjecture
is false.  On the Riemann hypothesis, it is known that $M(x) \ll x^{\frac{1}{2}+\varepsilon}$ for any $\varepsilon >0$.  Hence, by partial
summation, it follows that $M_{\alpha}(x)$ converges to $\zeta(\alpha)^{-1}$ for $\alpha > \frac{1}{2}$.  
Consequently, we observe that the behaviour of $M_{\alpha}(x)$ changes at $\alpha =1/2$
and thus define
\begin{equation}\label{115}
E_2(y,\alpha)=\left\{\begin{array}{lll}
e^{y(-1/2+\alpha)}M_\alpha(e^y)&\quad\text{if}&\quad0\leq\alpha\leq1/2,\\
e^{y(-1/2+\alpha)}\big(M_\alpha(e^y)- \frac{1}{\zeta(\alpha)} \big)&\quad\text{if}&\quad1/2<\alpha \le 1.
\end{array}\right.
\end{equation}

We now consider weighted sums of the Liouville function.  The Liouville function is given by $\lambda(n)=(-1)^{\Omega(n)}$
where $\Omega(n)$ is the total number of prime factors of $n$.   
We set
\begin{equation*}\label{116}
L_\alpha(x)=\sum_{n\leq x}\dfrac{\lambda(n)}{n^\alpha}.
\end{equation*}
P\'{o}lya and Tur\'{a}n studied $L_0(x)=L(x)$ and $L_1(x)$, respectively. 
Early numerical calculations suggested  that the inequalities  $L_0(x)\leq0$ and $L_1(x)>0$ hold  for all $x \ge 2$.
In 1958, Haselgrove 
\cite{Haselgrove} showed that $L_0(x)$ and $L_1(x)$ change sign infinitely often. 
Tanaka \cite{Tanaka} showed that the first value of $n$ 
for which $L_0(n)>0$ is 906,105,257. Borwein, Ferguson, and Mossinghoff \cite{Borwein} determined that the smallest 
value of $n$ for which $L_1(n)<0$ is 72,185,376,951,205.
It would be interesting to know how often $L_{\alpha}(x)$ is positive or negative. 
In order to study such questions we  define the error terms
\begin{equation}\label{58}
E_3(y,\alpha)=\left\{\begin{array}{lll}
e^{y(-1/2+\alpha)}L_\alpha(e^y)&\quad\text{if}&\quad0\leq\alpha<1/2,\\
e^{y(-1/2+\alpha)}\big(L_\alpha(e^y)- \frac{y}{2\zeta(1/2)} \big)&\quad\text{if}&\quad\alpha=1/2,\\
e^{y(-1/2+\alpha)}\big(L_\alpha(e^y)- \frac{\zeta(2\alpha)}{\zeta(\alpha)} \big)&\quad\text{if}&\quad1/2<\alpha\leq1.
\end{array}\right.
\end{equation}
In \cite{N} it was mentioned that $E_3(y,0)$ possesses a limiting distribution under the same hypotheses 
for which $e^{-y/2} M(e^y)$ possesses a limiting distribution.   Recently, Humphries \cite{Humphries}
studied these functions in the range $\alpha \in [0,1/2)$ and showed that, for these $\alpha$, the Riemann hypothesis and $J_{-1}(T) \ll T$ imply that $E_3(y,\alpha)$ possesses a limiting distribution. 

Our next example concerns the M\"{o}bius function in arithmetic progressions. 
For $q \ge 2$ and $(a,q)=1$, let 
$$M(x;q,a)=\sum_{\substack{n\leq x\\n\equiv a\ (\mathrm{mod}\ q)}}\mu(n).$$
This is a variant of $M(x)$ with the extra condition $n \equiv a\ (\mathrm{mod}\ q)$ inserted.  
Sums like $M(x;q,a)$ reflect the behaviour of 
primes in arithmetic progressions.  In fact, many theorems which can be established for 
\begin{equation*}
  \label{psixqa}
\sum_{\substack{n\leq x\\n\equiv a\ (\mathrm{mod}\ q)}}\Lambda(n)-\frac{x}{\phi(q)}
\end{equation*}
 have corresponding analogues for $M(x;q,a)$. 
For a fixed integer $q\geq2$, we define 
\begin{equation}\label{130}
   E_4(y;q,a) = e^{-y/2}  M(e^y;q,a).
\end{equation}
The next corollary establishes the existence of limiting distributions for $E_2(y,\alpha)$, $E_3(y,\alpha)$, and $E_4(y;q,a)$. 
\begin{cor}\label{132}  Let $\alpha \in [0,1]$, $q \ge 2$, and $(a,q)=1$.  \\
(i) If RH is true and \eqref{330} holds, then $E_2(y,\alpha)$ possesses a limiting distribution.\\
(ii) If RH is true and \eqref{330} holds, then $E_3(y,\alpha)$ possesses a limiting distribution.\\
(iii) If the generalized Riemann hypothesis is true for all Dirichlet $L$-functions modulo $q$ and \eqref{135} holds, then $E_4(y;q,a)$ possesses a limiting distribution.
\end{cor}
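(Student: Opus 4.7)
The strategy is to verify the hypotheses of Corollary \ref{11}(b) for each of $E_2(y,\alpha)$, $E_3(y,\alpha)$, and $E_4(y;q,a)$ by deriving in each case a truncated explicit formula of the shape \eqref{eq:phi}, with $\lambda_n=\gamma_n$ running through the positive imaginary parts of the nontrivial zeros of the relevant $L$-function(s). The density condition \eqref{lambda} is the classical estimate $N(T+1)-N(T)\ll\log T$ (uniform in the conductor for case (iii)), so the crux in each case is to check the second-moment hypothesis \eqref{sec-mom}.

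For part (i), I apply Perron's formula to $\sum_n \mu(n) n^{-\alpha-s}=\zeta(s+\alpha)^{-1}$ and shift the contour past the critical line $\Re(s+\alpha)=1/2$. Under RH the bound \eqref{330} forces all nontrivial zeros of $\zeta$ to be simple, and the residues at $s=\rho-\alpha$ produce coefficients
\[
r_n=\frac{2}{(\rho_n-\alpha)\,\zeta'(\rho_n)},
\]
while the residue at $s=0$ supplies $1/\zeta(\alpha)$. This constant is exactly what is subtracted in \eqref{115} when $\alpha>1/2$; for $\alpha\le 1/2$ it produces either a harmless additive constant $c$ in \eqref{eq:phi} or a term absorbed into $\mathcal{E}(y,X)$ since $e^{y(-1/2+\alpha)}/\zeta(\alpha)$ is negligible in mean square. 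With $\lambda_n=\gamma_n$ one has $\lambda_n^2|r_n|^2\asymp|\zeta'(\rho_n)|^{-2}$, so \eqref{sec-mom} reduces precisely to \eqref{330}.

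Part (ii) is parallel, using the generating series $\zeta(2s+2\alpha)/\zeta(s+\alpha)$. The pole of $\zeta(2s+2\alpha)$ at $s=1/2-\alpha$ supplies the main term for $\alpha<1/2$; the residue at $s=0$ equals $\zeta(2\alpha)/\zeta(\alpha)$ when $\alpha>1/2$; and at $\alpha=1/2$ the confluence of these two poles into a double pole at $s=0$ produces the linear-in-$y$ term $y/(2\zeta(1/2))$ appearing in \eqref{58}. The zero coefficients are now $r_n\propto \zeta(2\rho_n)/((\rho_n-\alpha)\zeta'(\rho_n))$, and under RH the bound $|\zeta(1+2i\gamma)|\ll\log|\gamma|$ yields
\[
\sum_{\gamma_n\le T}\lambda_n^{2}|r_n|^{2}\ll (\log T)^{2}\,J_{-1}(T)\ll T^{\theta+\varepsilon},
\]
and the strict inequality $\theta<3-\sqrt{3}$ in \eqref{330} provides enough slack to absorb the logarithmic factor. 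For part (iii) I use the orthogonality decomposition
\[
M(x;q,a)=\frac{1}{\varphi(q)}\sum_{\chi\bmod q}\overline{\chi}(a)\sum_{n\le x}\mu(n)\chi(n);
\]
the principal-character contribution (after correcting for primes dividing $q$) reduces to the case handled in part (i), while each nonprincipal twisted sum $\sum_{n\le x}\mu(n)\chi(n)$ carries its own explicit formula obtained from $1/L(s,\chi)$ under GRH. The simplicity of nonreal zeros implied by \eqref{135} gives $r_n\propto 1/(\rho_\chi L'(\rho_\chi,\chi))$, and summing the second moment across characters reduces \eqref{sec-mom} to \eqref{135} directly.

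The main technical obstacle is the rigorous derivation of these truncated explicit formulas with an error $\mathcal{E}(y,X)$ satisfying \eqref{9}: the truncated Perron estimates yield a contribution of size $e^{y}\log^2(e^yX)/X$ plus boundary terms whose mean square (with $X=e^Y$) must be shown to vanish as in Wintner's treatment of \eqref{eq:Wild}. The most delicate individual computation is the Laurent expansion of $\zeta(2s+1)/(s\,\zeta(s+1/2))$ at $s=0$ for $E_3(y,1/2)$, and one must also verify that the $(\log T)^2$ factor in part (ii) does not push the exponent past $3-\sqrt{3}$, which is ensured by the strict inequality in \eqref{330}. Once these explicit formulas and moment estimates are in place, Corollary \ref{11}(b) delivers a limiting distribution in all three cases.
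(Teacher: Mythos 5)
Your proposal is correct and follows essentially the same route as the paper: in each case you reduce to Corollary \ref{11}(b) by deriving a truncated explicit formula via Perron and a contour shift (orthogonality of characters for part (iii)), with $\lambda_n$ the positive ordinates of the relevant $L$-function zeros and $r_n$ a residue at a simple zero, so that the second-moment hypothesis \eqref{sec-mom} becomes exactly \eqref{330} or \eqref{135}. The paper packages the Perron argument in a general Lemma~\ref{105} (and a companion Lemma~\ref{102} to move from a special sequence $T_m$ of horizontal heights to arbitrary $T$), and it isolates a residue term at $s=1/2$ coming from the potential vanishing of $L(1/2,\chi)$ in case (iii); you gesture at the same issues without spelling them out, and your identification of the three regimes of $\alpha$ (including the confluent double pole at $\alpha=1/2$ for the Liouville sum) matches the paper's $R_{\alpha,s_0}(x)$ case analysis. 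A small cosmetic difference: for part (ii) the paper uses $\zeta(1+it)\ll(\log t)^{3/4+\varepsilon}$ to get $T^\theta(\log T)^{3/2+\varepsilon}$ rather than your $T^\theta(\log T)^2$, but both are absorbed by the strict inequality $\theta<3-\sqrt3$, as you observe; also the bound $\zeta(1+it)\ll\log t$ is unconditional and does not require RH.
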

Part (i) improves and generalizes the main result of \cite{N}. Similarly, part (ii) improves and generalizes the limiting distribution result of \cite{Humphries}.
In \cite{N} and \cite{Humphries} the bound $J_{-1}(T) \ll T$ is employed, whereas we use the weaker bound \eqref{330}. 
It it possible that parts (i) and (ii) may be extended to hold for all $\alpha \in \mathbb{R}$.  In addition, assuming the same conditions as in part (iii), 
we can show 
that for $q \ge 2$ and $(a,q)=1$ that
$e^{-y/2} L(e^y;q,a)$ possesses a limiting distribution where 
$$L(x;q,a) = \sum_{\substack{m\leq x\\n\equiv a\ (\mathrm{mod}\ q)}}\lambda(n).$$

Our final example of error terms which possess limiting distributions is related to number fields. Let $K/k$ be a normal extension of number fields with Galois group $G = \mathrm{Gal}(K/k)$. Denote by $\mathcal{O}_{k}$ and $\mathcal{O}_{K}$ the corresponding rings of integers of $k$ and $K$. 
We define several counting functions.  Let
\[
   \pi_{k}(x) = \sum_{N\mathfrak{p} \le x} 1
\]
where $N \mathfrak{p}$ denotes the norm of the prime ideal $\mathfrak{p}  \subset \mathcal{O}_k$
and for a conjugacy class $C$ of $G$
\[
   \pi_{C}(x) = \sum_{\substack{N\mathfrak{p} \le x \\ \sigma_{\mathfrak{p}}=C}} 1
\]
where $\sigma_{\mathfrak{p}}$ is the Frobenius conjugacy class associated to $\mathfrak{p}$. 

Associated to $r$ distinct conjugacy classes $C_1, \ldots, C_r$ in $G$, we define
$$\vec{E}_5(y)=ye^{-y/2}\left( \frac{|G|}{|C_1|}\pi_{C_1}(e^y) - \pi_{k}(e^y) , \ldots , \frac{|G|}{|C_r|}\pi_{C_r}(e^y) - \pi_{k}(e^y) \right).$$ 
In order to study $\vec{E}_5(y)$, we require information regarding the zeros of  Artin $L$-functions associated to the extension $K/k$. 
Let $\rho$ be a representation of $G$ in $\mathrm{GL}_n(\mathbb{C})$ with character $\chi=\mathrm{tr}(\rho)$.
The principal character $\chi_0$ is the character attached to the trivial representation $\rho_0=1$. 
For each character $\chi$ of $G$, we associate  the Artin $L$-function $L(s,\chi,K/k)$. It is known that 
$L(s,\chi,K/k)$ is a meromorphic function on the complex plane. Moreover, there is the following fundamental conjecture.
\begin{conj}[Artin's Holomorphy Conjecture]\label{335}

If $\chi$ is non-trivial then $L(s,\chi,K/k)$ is entire.
\end{conj}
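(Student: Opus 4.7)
The plan starts by acknowledging that Artin's Holomorphy Conjecture is a central open problem, so any proposal is necessarily aspirational. The classical line of attack is Brauer induction: every character $\chi$ of a finite group $G$ can be written as an integer combination $\chi = \sum_i n_i \mathrm{Ind}_{H_i}^G \psi_i$, where each $\psi_i$ is a one-dimensional character of a subgroup $H_i \le G$. The compatibility of Artin $L$-functions with induction yields
\begin{equation*}
L(s,\chi,K/k) = \prod_i L(s,\psi_i,K/K^{H_i})^{n_i}.
\end{equation*}
Artin reciprocity identifies each $L(s,\psi_i,K/K^{H_i})$ with a Hecke $L$-function, which is entire when $\psi_i$ is non-trivial and has a simple pole at $s=1$ otherwise. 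This already gives meromorphy on all of $\C$; what remains is to rule out poles, which the possibly negative $n_i$ threaten to produce.

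The modern plan for controlling these poles is via Langlands functoriality. The goal is to exhibit, for every non-trivial irreducible Artin representation $\rho$ of dimension $n$, a cuspidal automorphic representation $\piup_\rho$ of $\mathrm{GL}_n(\mathbb{A}_k)$ whose standard $L$-function coincides with $L(s,\rho,K/k)$. Once such an automorphic lift is in hand, the holomorphy of $L(s,\piup_\rho)$ is a theorem of Godement--Jacquet, and $L(s,\chi,K/k)$ inherits entirety by linearity in the character. I would proceed by a case analysis on the image of $\rho$: dimension one is settled by class field theory; in dimension two, Langlands--Tunnell handles solvable image and the Khare--Wintenberger--Kisin modularity theorems cover the odd non-solvable cases; in higher dimensions one would try to assemble $\piup_\rho$ from lower-dimensional automorphic data using automorphic induction from intermediate fields, together with tensor product and symmetric power lifts.

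The hard part is exactly the missing functoriality. No general construction of $\piup_\rho$ for Artin representations of arbitrary dimension is currently available, and Brauer's integer decomposition alone is too coarse to force cancellation of the negative exponents. A natural intermediate goal, due to Stark and Heilbronn, is to establish the \emph{Dedekind conjecture} $\zeta_K(s)/\zeta_k(s)$ is entire, which follows from Aramata--Brauer in the Galois case but reflects the same arithmetic difficulty. My proposal therefore reduces Artin holomorphy to establishing the requisite pieces of Langlands functoriality, and I expect this reduction itself to be the decisive obstruction, with no evident route around it from within classical analytic number theory.
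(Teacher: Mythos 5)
The statement you were asked to prove is labelled \texttt{Conjecture} in the paper, not \texttt{Theorem}, and the paper offers no proof of it: Artin's Holomorphy Conjecture is a famous open problem, and the authors only invoke it as a \emph{hypothesis} in Corollary \ref{118} (existence of a limiting distribution for $\vec{E}_5$, together with GRH for the relevant Artin $L$-functions). There is therefore no argument in the paper against which to check you. You correctly recognised this at the outset, and your proposal is really an accurate survey of the state of the art rather than a proof: Brauer induction gives meromorphy of $L(s,\chi,K/k)$ but the integer coefficients $n_i$ may be negative, so poles are not excluded; the Langlands functoriality programme would resolve this via Godement--Jacquet if one could attach a cuspidal automorphic representation to every irreducible Artin representation, but this is known only in low dimension (class field theory in dimension one; Langlands--Tunnell and modularity in dimension two); and the Aramata--Brauer theorem settles the weaker Dedekind-quotient question in the Galois case only. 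Your honest conclusion that the reduction to functoriality is itself the decisive open obstruction is the correct one. The only thing to flag is presentational: since this is a conjecture the paper assumes rather than proves, the appropriate response was simply to note that no proof exists (as you essentially did), rather than to attempt one.
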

Also it is conjectured that an analogue of the Riemann hypothesis holds for  Artin $L$-functions. 

For further information regarding Artin $L$-functions see \cite[pp. 218--225]{Cassels} .

In his Ph.D. thesis \cite{NgPhd}, the second author showed that $\vec{E}_5(y)$ possesses a limiting distribution.
This can be deduced as a corollary of Theorem \ref{117}.   
\begin{cor}\label{118}
Under the assumptions of the generalized Riemann hypothesis and Artin's holomorphy conjecture for $L(s,\chi,K/k)$, where $\chi$ ranges through the irreducible characters of $G$, $\vec{E}_5(y)$ possesses a limiting distribution.
\end{cor}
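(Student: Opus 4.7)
The plan is to deduce Corollary \ref{118} from Theorem \ref{117} by verifying that each scalar component
\[
  \phi_j(y) = y e^{-y/2} \Big( \frac{|G|}{|C_j|} \pi_{C_j}(e^y) - \pi_{k}(e^y) \Big), \qquad 1 \le j \le r,
\]
of $\vec{E}_5(y)$ satisfies the hypotheses of Corollary \ref{11}(a) with $\beta = 1$.

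First I would invoke Artin formalism. By orthogonality of the irreducible characters of $G$, for any conjugacy class $C_j$,
\[
  \frac{|G|}{|C_j|}\pi_{C_j}(x) - \pi_{k}(x) = \sum_{\chi \ne \chi_0}\overline{\chi(C_j)}\, \pi(x,\chi,K/k),
\]
where $\pi(x,\chi,K/k) = \sum_{N\mathfrak{p} \le x}\chi(\sigma_{\mathfrak{p}})$ and the sum ranges over the non-trivial irreducible characters of $G$. Since a finite $\mathbb{C}$-linear combination of expansions of the form \eqref{eq:phi} is again of that form (with the pooled frequency set), it suffices to establish such an expansion for $y e^{-y/2}\pi(e^y,\chi,K/k)$ for each non-trivial $\chi$.

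Next I would derive the explicit formula. Under Artin's holomorphy conjecture, $L(s,\chi,K/k)$ is entire for $\chi \ne \chi_0$; under GRH its non-trivial zeros lie on $\Re(s)=1/2$. The standard truncated explicit formula gives
\[
  \psi(x,\chi,K/k) = -\sum_{|\Im(\rho_{\chi})| \le X}\frac{x^{\rho_{\chi}}}{\rho_{\chi}} + O\!\Big( \tfrac{x(\log xX)^2}{X} + \log x\Big).
\]
Passing from $\psi$ to $\pi$ by partial summation, using $\mathrm{Li}(x^{\rho}) = x^{\rho}/(\rho \log x) + O(x^{1/2}(\log x)^{-2})$ on the critical line, and pairing each zero $\rho_{\chi}$ with its conjugate (a zero of $L(s,\bar\chi,K/k)$), I would arrive at
\[
  y e^{-y/2}\pi(e^y,\chi,K/k) = \Re\!\Bigg(\sum_{0 < \gamma_{\chi} \le X}\frac{-2\, e^{i y \gamma_{\chi}}}{\rho_{\chi}}\Bigg) + \mathcal{E}_{\chi}(y,X).
\]
Weighting by $\overline{\chi(C_j)}$ and summing over non-trivial $\chi$ produces an expansion of $\phi_j(y)$ as in \eqref{eq:phi}, with frequencies $\lambda_n$ drawn from the positive ordinates $\gamma_{\chi}$ of zeros of the various Artin $L$-functions and coefficients $r_n \ll 1/|\rho_{\chi}| \ll 1/\lambda_n$. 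Hence the bound $r_n \ll \lambda_n^{-1}$ of Corollary \ref{11}(a) holds with $\beta = 1$, and the density hypothesis \eqref{lambda}, namely $\sum_{T < \gamma_{\chi} \le T+1} 1 \ll \log T$, is the unconditional Riemann--von Mangoldt zero count obtained from the functional equation of $L(s,\chi,K/k)$.

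The main obstacle will be showing that the aggregate error $\mathcal{E}(y,X)$, with $X = e^Y$, satisfies the mean-square condition \eqref{9}. This requires splitting $\mathcal{E}$ into the tail from zeros with $|\gamma_{\chi}| > X$ (handled via the zero-density estimate together with a standard mean-square bound on $\int_{y_0}^{Y} |\sum_{\gamma > X} e^{i y \gamma}/\rho|^2 dy$) and the remainder from the partial-summation passage and the $\psi$-to-$\vartheta$ difference (bounded uniformly for $x = e^y \le e^Y$). Once these checks are completed for each $\phi_j$, Theorem \ref{117} yields at once that $\vec{E}_5(y)$ possesses a limiting distribution.
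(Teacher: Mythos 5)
Your overall strategy -- invoking the Artin formalism to reduce to explicit formulas for $L(s,\chi,K/k)$, writing $\phi_j$ in the form \eqref{eq:phi}, and then appealing to Corollary \ref{11} -- is the same route the paper takes. You apply Corollary \ref{11}(a) with $\beta=1$ (from $r_n \ll 1/\lambda_n$ plus the Riemann--von Mangoldt count), whereas the paper verifies the second-moment bound $\sum_{\lambda_n \le T}\lambda_n^2|r_n|^2 \ll T\log T$ and uses Corollary \ref{11}(b). Both verifications succeed here, so that difference is immaterial.

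There is, however, a genuine gap in your derivation of the expansion. Your displayed formula
\[
y e^{-y/2}\pi(e^y,\chi,K/k) = \Re\!\Bigg(\sum_{0 < \gamma_{\chi} \le X}\frac{-2\, e^{i y \gamma_{\chi}}}{\rho_{\chi}}\Bigg) + \mathcal{E}_{\chi}(y,X)
\]
has no constant term, but the correct explicit formula must contain one. Two sources contribute a nonvanishing constant after normalization by $\tfrac{\log x}{\sqrt{x}}$: (1) the prime-power correction in passing from $\psi(x,\chi)$ (which counts prime powers) to $\pi(x,\chi)$ (which counts primes); the $m=2$ terms are of size $\asymp \sqrt{x}/\log x$ with a main term governed by the squaring map $\mathrm{sq}:G\to G$, which is why the paper's constant $c(G,C_j)$ contains $-1 + |\mathrm{sq}^{-1}(C_j)|/|C_j|$; and (2) any zeros of $L(s,\chi,K/k)$ at $s=1/2$ (i.e., $\gamma_\chi=0$), excluded from your sum over $0<\gamma_\chi\le X$, which contribute the term $2\sum_{\chi\ne\chi_0}\overline{\chi(C_j)}\,\mathrm{ord}_{s=1/2}L(s,\chi,K/k)$. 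If you sweep this constant into $\mathcal{E}_\chi$, then $\frac{1}{Y}\int_{y_0}^Y|\mathcal{E}(y,e^Y)|^2\,dy$ tends to $c^2\ne 0$ for any $C_j$ with $c(G,C_j)\ne 0$, and condition \eqref{9} fails. The fix is exactly what the paper does: record the explicit formula \eqref{100} \emph{with} the constant $-c(G,C_j)$, which then plays the role of $c$ in \eqref{eq:phi}, and verify \eqref{9} only for the genuinely decaying remainder. Once that constant is accounted for, the rest of your argument (decay of the aggregate error via zero density, $r_n\ll 1/\lambda_n$, and Theorem \ref{117}) goes through.
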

\noindent This result contains as special cases the fact that \eqref{119} and \eqref{eq:RSld} possess limiting distributions.

The above corollaries are just a few applications of Theorems \ref{lim-dis} and \ref{117} and there are other interesting examples. 
For instance, Fiorilli \cite{F1}  applies our theorems in his work on highly biased prime number races
and also in his work \cite{F2} on prime number races associated to elliptic curves.

Our next theorem states that under an additional assumption on the exponent set $(\lambda_n)_{n \in \mathbb{N}}$ 
the Fourier transform of the limiting distribution of Theorem \ref{117} can be explicitly calculated. 
In order to explain our result we require some notation.

For $1\leq k\leq\ell$, let the component function $\phi_k(x)$ of \eqref{eq:vecphi} be defined by
$$\phi_k(x)=c_k+\Re\Big(\sum_{\lambda_{k,n}\leq X}r_{k,n}e^{i\lambda_{k,n}y}\Big)+\mathcal{E}_k(y,X),$$
where $c_k\in\mathbb{R}$, $(\lambda_{k,n})_{n \in \mathbb{N}} \subset \mathbb{R}^{+}$ is an increasing sequence, 
 $(r_{k,n})_{n\in \mathbb{N}} \subset \mathbb{C}$, and $\mathcal{E}_k(y,X)$ satisfies \eqref{9}.
Note that the collection of $(\lambda_{k,n})_{n \in \mathbb{N}}$ for $1 \le k \le \ell$ is a multiset.
We now consider the set $\cup_{k=1}^{\ell} \cup_{n=1}^{\infty} \{ \lambda_{k,n} \}$ and reorder its 
elements to construct the increasing sequence
$(\lambda_m)_{m \in \mathbb{N}}$. Also, we define
\begin{equation*}\label{338}
r_k(\lambda_m)=\left\{\begin{array}{ll}r_{k,n}&\mbox{if $\lambda_m=\lambda_{k,n}$ 
for some $n\in\mathbb{N}$,}\\0&\text{otherwise}.\end{array}\right.
\end{equation*}
With this notation in hand, we now provide a formula for the Fourier transform of the limiting distribution 
of $\vec{\phi}(y)$. 
\begin{theorem}\label{66}
Assume that $\mu$ is the limiting distribution associated to $\vec{\phi}(y)$ as given 
in Theorem \ref{117}. Suppose that the set $\{\lambda_m\}_{m \in \mathbb{N}}$ is linearly independent over $\mathbb{Q}$. 
Then the Fourier transform 
$$\hat{\mu}(\vec{\xi})=\int_{\mathbb{R}^\ell}e^{-i\sum_{j=1}^\ell\xi_jt_j}d\mu(t_1,\ldots,t_\ell)$$ 
of $\mu$ at $\vec{\xi}=(\xi_1,\ldots,\xi_\ell)\in\mathbb{R}^\ell$ 
exists and is equal to
$$\hat{\mu}(\vec{\xi})=\exp\big(-i{\textstyle\sum_{k=1}^\ell} c_k\xi_k\big)\times\prod_{m=1}^\infty J_0\big(\big|{\textstyle\sum_{k=1}^\ell} r_k(\lambda_m)\xi_k\big|\big),$$
where $J_0(z)$ is the Bessel function
\begin{equation*}
J_0(z)=\int_0^1e^{-iz\cos(2\pi t)}dt.
\end{equation*}
\end{theorem}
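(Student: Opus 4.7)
The plan is to compute the Fourier transform as a Cesàro mean. Applying Definition \ref{4} to the real and imaginary parts of the bounded continuous map $\vec{t}\mapsto e^{-i\vec{\xi}\cdot\vec{t}}$ yields
$$\hat{\mu}(\vec{\xi})=\lim_{Y\to\infty}\frac{1}{Y}\int_0^Y e^{-i\vec{\xi}\cdot\vec{\phi}(y)}\,dy.$$
Taking $X=e^Y$ in the representation of each $\phi_k$, regrouping according to the reindexed frequencies $(\lambda_m)_{m\in\mathbb{N}}$, and setting $R_m=\sum_{k=1}^\ell r_k(\lambda_m)\xi_k$, we obtain
$$\vec{\xi}\cdot\vec{\phi}(y)=\sum_{k=1}^\ell c_k\xi_k+\Re\Big(\sum_{\lambda_m\le e^Y}R_m\,e^{i\lambda_m y}\Big)+\sum_{k=1}^\ell\xi_k\mathcal{E}_k(y,e^Y).$$
Using $|e^{-ia}-e^{-ib}|\le|a-b|$, Cauchy--Schwarz, and the hypothesis \eqref{9} applied to each component, the $\mathcal{E}_k$ contribution to the Cesàro mean vanishes as $Y\to\infty$.

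The core step is to replace the partial trigonometric sum $\Re(\sum_{\lambda_m\le e^Y}R_m e^{i\lambda_m y})$ by the fixed-length truncation $P_M(y)=\Re(\sum_{m=1}^M R_m e^{i\lambda_m y})$. Expanding $|\sum_{M<m,\,\lambda_m\le e^Y}R_m e^{i\lambda_m y}|^2$, the diagonal terms contribute $\sum_{M<m,\,\lambda_m\le e^Y}|R_m|^2$ to the Cesàro mean while the off-diagonal terms contribute $O(Y^{-1})$, so, again via $|e^{-ia}-e^{-ib}|\le|a-b|$ and Cauchy--Schwarz,
$$\hat{\mu}(\vec{\xi})=e^{-i\sum_k c_k\xi_k}\,\lim_{M\to\infty}\lim_{Y\to\infty}\frac{1}{Y}\int_0^Y e^{-iP_M(y)}\,dy,$$
provided $\sum_m|R_m|^2<\infty$, which follows from Parseval for $B^2$-almost periodic functions applied to $\vec{\xi}\cdot\vec{\phi}-\sum_k c_k\xi_k$ (whose $B^2$-almost periodicity is supplied by Theorems \ref{lim-dis} and \ref{117}).

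For fixed $M$, the $\mathbb{Q}$-linear independence of $\{\lambda_1,\ldots,\lambda_M\}$ together with the Kronecker--Weyl equidistribution theorem implies that the one-parameter flow $y\mapsto(\lambda_1 y,\ldots,\lambda_M y)$ equidistributes modulo $2\pi$ on $[0,2\pi)^M$. Writing $R_m=|R_m|e^{i\theta_m}$ so that $P_M(y)=\sum_{m=1}^M|R_m|\cos(\lambda_m y+\theta_m)$, equidistribution yields
$$\lim_{Y\to\infty}\frac{1}{Y}\int_0^Y e^{-iP_M(y)}\,dy=\prod_{m=1}^M\frac{1}{2\pi}\int_0^{2\pi}e^{-i|R_m|\cos(u+\theta_m)}\,du=\prod_{m=1}^M J_0(|R_m|),$$
where the phase $\theta_m$ is absorbed by $2\pi$-periodicity and the substitution $u=2\pi t$ recovers the stated definition of $J_0$. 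Letting $M\to\infty$, the infinite product converges absolutely since $1-J_0(z)=z^2/4+O(z^4)$ and $\sum_m|R_m|^2<\infty$, yielding the claimed formula.

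The main obstacle lies in the truncation step: one must identify the Fourier--Bohr coefficients of the $B^2$-almost periodic function $\vec{\xi}\cdot\vec{\phi}-\sum_k c_k\xi_k$ as $R_m/2$ at $\lambda_m$ and $\overline{R_m}/2$ at $-\lambda_m$, and establish genuine $B^2$-convergence of the partial sums $P_M$. This requires careful bookkeeping to pass from the explicit formula hypothesis \eqref{eq:phi}--\eqref{9} to a Bohr expansion, using the orthogonality of the exponentials $e^{i\lambda_m y}$ in the $B^2$-inner product and tight control over the cross terms at distinct frequencies.
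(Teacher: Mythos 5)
Your overall route parallels the paper's (Kronecker--Weyl on the full torus by linear independence, the resulting product of $J_0$ factors, then pass to the limit), but there is a genuine gap in your truncation step. You claim that when expanding $\big|\sum_{M<m,\,\lambda_m\le e^Y}R_m e^{i\lambda_m y}\big|^2$, ``the off-diagonal terms contribute $O(Y^{-1})$'' to the Cesàro mean. That is not true in general: the off-diagonal contribution is
$\frac{1}{Y}\sum_{m\neq n}R_m\overline{R_n}\,\frac{e^{i(\lambda_m-\lambda_n)Y}-1}{i(\lambda_m-\lambda_n)}$,
and the sum $\sum_{m\neq n,\,\lambda_m,\lambda_n\le e^Y}|R_m||R_n|/|\lambda_m-\lambda_n|$ grows with $Y$ (for the examples in the paper, including $\zeta$-zeros, the unrestricted sum diverges). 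Controlling these near-diagonal pairs is precisely the point of the proof of Theorem~\ref{lim-dis}, which needs either Gallagher's lemma under hypothesis~(a) or the Cram\'er-style dissection under hypothesis~(b) to obtain the bound~\eqref{alm-per}. Your diagonal/off-diagonal heuristic silently assumes the conclusion.

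The paper avoids this entirely by reusing what is already established: Theorem~\ref{lim-dis} shows that the specific truncations $\Re\big(\sum_{\lambda_n\le T}r_n e^{i\lambda_n y}\big)$ approximate $\phi(y)-c$ in the $B^2$ (hence $B^1$) norm, so the proof of Theorem~\ref{340} applies with these truncations and gives the weak convergence $\mu_N(f)\to\mu(f)$ for bounded Lipschitz $f$ (equation~\eqref{353}). Applying this with $f$ the real and imaginary parts of $e^{-i\vec{\xi}\cdot\vec{t}}$, and computing $\hat\mu_N$ via the change-of-variables formula~\eqref{348} plus Kronecker--Weyl, directly yields the infinite product, with convergence of the product guaranteed by the weak convergence rather than by a separate $\sum_m|R_m|^2<\infty$ estimate. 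If you want to repair your version, replace the ``$O(Y^{-1})$'' claim by a citation of~\eqref{alm-per} (or, equivalently, the $B^2$-almost-periodicity established in Theorem~\ref{lim-dis}), after which $\sum_m|R_m|^2<\infty$ does indeed follow from Parseval (Theorem~\ref{123}) and the remainder of your argument goes through. You were right to flag this as the main obstacle; the fix is to lean on the machinery already built rather than to re-derive the tail estimate from scratch.
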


The above theorem is a useful tool in studying arithmetic  applications of our limiting distribution theorems. 
We now discuss an application.
For $q \ge 2$ and $a_1,\ldots,a_r$, $r$ distinct reduced residue classes mod $q$, consider the set
\[
  S_{q;a_1, \ldots, a_r} = \{x>0\mid M(x;q,a_1) > M(x;q,a_2) > \cdots > M(x;q,a_r)\}.
\]
In analogy to the Shanks-R\'enyi prime number race, we ask whether this set
contains infinitely many natural numbers and if it possesses a density. 
In this situation it is convenient to consider logarithmic density. 
\begin{definition}\label{503}
For $P\subseteq[0,\infty)$, set
$$\overline{\delta}(P)=\limsup_{X\to\infty}\dfrac{1}{\log X}\int_{t\in P\cap[2,X]}\dfrac{dt}{t}$$
and
$$\underline{\delta}(P)=\liminf_{X\to\infty}\dfrac{1}{\log X}\int_{t\in P\cap[2,X]}\dfrac{dt}{t}.$$
If $\overline{\delta}(P)=\underline{\delta}(P)=\delta(P)$, we say that the \emph{logarithmic 
density} of $P$ is $\delta(P)$.
\end{definition}
In order to study $ S_{q;a_1, \ldots, a_r}$, we consider
$$\vec{E}_6(y)=e^{-y/2}\big(M(e^y;q,a_1),\ldots,M(e^y;q,a_r)\big).$$
Theorem \ref{117} implies $\vec{E}_6(y)$ has a limiting distribution $\mu_{q;a_1,\ldots,a_r}$
assuming the generalized Riemann hypothesis for Dirichlet $L$-functions modulo $q$ and \eqref{135}.
If it were known that $\mu_{q;a_1,\ldots,a_r}$ is an absolutely continuous measure, then it would 
follow that 
\begin{equation} \label{deltaform}
   \delta( S_{q;a_1, \ldots, a_r}) = \mu(\{ x \in \mathbb{R}^r \ | \ x_1 > x_2 > \cdots > x_r \}).
\end{equation}
In order to show that $\mu_{q;a_1,\ldots,a_r}$ is absolutely continuous, we require further information
on the imaginary parts of zeros of Dirichlet $L$-functions.
We now recall a folklore conjecture concerning the  diophantine nature of the imaginary parts.
\begin{conj}[Linear Independence Conjecture] \label{LI}
The multiset of the nonnegative imaginary parts of the nontrivial zeros of Dirichlet $L$-functions corresponding
to primitive characters is linearly independent over the rationals. 

\end{conj}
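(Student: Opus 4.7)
The statement to be proved is Conjecture~\ref{LI}, a notorious open problem for which no nontrivial instance is currently known; this proposal is therefore necessarily speculative, and the goal below is to describe the most natural line of attack and to be explicit about where it breaks down.

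First, I would reduce to a single primitive character. Using Dirichlet orthogonality inside explicit-formula identities for $\psi(x;q,a)$, a hypothetical $\mathbb{Q}$-linear relation among ordinates drawn from zeros of several distinct primitive $L$-functions should be projectable, by averaging over a well-chosen family of characters, onto a relation internal to the zero set of a single $L(s,\chi)$. The essential content thus becomes the $\mathbb{Q}$-linear independence of the positive ordinates $\gamma$ of the nontrivial zeros of one fixed primitive $L(s,\chi)$ (the case $\chi = \chi_0$ recovering the Riemann zeta function).

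Second, I would attempt to contradict a hypothetical minimal relation
\begin{equation*}
\sum_{k=1}^N c_k \gamma_k = 0, \qquad c_k \in \mathbb{Z}\setminus\{0\}, \quad 0 < \gamma_1 < \cdots < \gamma_N,
\end{equation*}
by feeding it into the explicit formula for $\psi(x,\chi)$. Such a relation forces a persistent phase alignment among the oscillatory terms $x^{i\gamma_k}$ along any geometric progression of scales $x_j = e^{T_0 j}$ for which $T_0\gamma_k \in 2\pi\mathbb{Q}$ for every $k$. The plan is to combine this alignment with an independent mean-square estimate for $\psi(x,\chi)$ (after subtracting the appropriate main term) on long intervals, or with Montgomery-style pair- and triple-correlation information for the nontrivial zeros, to produce at those scales a lower bound on $\sum_k x_j^{i\gamma_k}/\rho_k$ that is incompatible with the trivial analytic control on $\psi(x,\chi)$. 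Concretely, I would integrate the explicit formula against a Fej\'er-type kernel supported on the scales $x_j$ in order to isolate the contribution of the colluding zeros from the tail of the zero sum. If the purely analytic route stalls, an alternative is transcendence-theoretic: attempt to realise the $\gamma_k$ as periods attached to inequivalent motives for $L(s,\chi)$ and appeal to a form of Grothendieck's period conjecture.

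The main obstacle, and the reason the conjecture has resisted all prior attempts, is twofold. The explicit formula is essentially a one-way transform from zeros to primes, so the analytic strategy above requires prime-side information strong enough to detect an $N$-term phase relation among ordinates, and nothing remotely of that strength is known for any $N \ge 2$; indeed, even a single zero cannot currently be pinned down arithmetically. On the transcendence side, we do not know that even one nontrivial zero of $\zeta(s)$ is irrational, so any period-theoretic route faces a zeroth-order obstacle before it can address $\mathbb{Q}$-linear independence. A genuinely new ingredient---most plausibly a geometric or automorphic interpretation of the individual zeros themselves---appears to be required for any unconditional proof.
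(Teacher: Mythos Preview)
The statement you are attempting to prove is labeled in the paper as a \emph{conjecture}, not a theorem, and the paper makes no attempt to prove it. Conjecture~\ref{LI} is introduced as a ``folklore conjecture'' and is used only as a \emph{hypothesis} in subsequent results (the formula~\eqref{muhatformula}, Proposition~\ref{502}, and Corollary~\ref{504}). There is therefore no proof in the paper against which to compare your proposal.

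Your write-up correctly recognizes that the Linear Independence Conjecture is a wide-open problem and that your sketch is speculative. That honesty is appropriate, but as a ``proof proposal'' the document does not deliver a proof: the explicit-formula/pair-correlation strategy you outline has the gap you yourself identify---prime-side information of the required strength is unavailable for any $N\ge 2$---and the transcendence route is blocked even earlier, since no single nontrivial zero ordinate is known to be irrational. In short, there is nothing to grade against the paper here; the statement is not proved in the paper, and your proposal (by its own admission) does not prove it either.
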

With this conjecture in hand, it follows from Formula \eqref{500} and Theorem \ref{66} that
\begin{equation}
 \label{muhatformula}
\hat{\mu}_{q;a_1,\ldots,a_r}(\xi_1,\ldots,\xi_r)=\prod_{\chi\ \mathrm{mod}\ q}\prod_{\gamma_\chi>0}
J_0\bigg(\dfrac{2\big|\sum_{j=1}^r\chi(a_j)\xi_j\big|}{\varphi(q) \big|\rho_\chi L'(\rho_\chi,\chi)\big|}\bigg).
\end{equation}
Following the arguments in \cite[Lemma 2.1]{FeMa} and \cite[Lemma 6.4]{Humphries} we can deduce 
from \eqref{muhatformula} that $\mu_{q;a_1,\ldots,a_r}$ possesses a density function and  is absolutely continuous.
Thus  on the generalized Riemann hypothesis, Conjecture \ref{LI}, and \eqref{135} it follows from 
\eqref{deltaform} that $\delta( S_{q;a_1, \ldots, a_r})$ exists. We can also employ
\eqref{muhatformula} to investigate symmetries of the density function of $\mu_{q;a_1,\ldots,a_r}$.
The proof of Proposition 3.1 of \cite{RS} yields the following. 

\begin{prop}\label{502}
Assume the generalized Riemann hypothesis, \eqref{135}, and the linear independence conjecture. 
Then the density function of $\mu_{q;a_1,\ldots,a_r}$ is symmetric in $(t_1,\ldots,t_\ell)$
if and only if either $r=2$ or $r=3$ and there is $\rho\neq1$ such that $\rho^3\equiv1$, 
$a_2\equiv a_1\rho$, and $a_3\equiv a_1\rho^2$ (modulo $q$). 
\end{prop}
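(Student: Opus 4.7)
The plan is to reduce the symmetry of the density to an algebraic condition on the residues $a_1, \ldots, a_r$ modulo $q$ via Fourier analysis, and then to analyze the resulting condition combinatorially, following \cite[Proposition 3.1]{RS}.

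First, since $\mu := \mu_{q;a_1,\ldots,a_r}$ is absolutely continuous on $\mathbb{R}^r$, the density is invariant under every permutation of the coordinates iff $\mu$ itself is invariant under the action of $S_r$ on $\mathbb{R}^r$. By uniqueness of Fourier transforms and since transpositions generate $S_r$, it suffices to verify $\hat\mu(\sigma\vec\xi) = \hat\mu(\vec\xi)$ for all $\vec\xi \in \mathbb{R}^r$ and every transposition $\sigma \in S_r$. Writing $L_\chi(\vec\xi) := \sum_{j=1}^r \chi(a_j)\xi_j$, formula \eqref{muhatformula} converts this requirement into
$$\prod_{\chi \bmod q}\prod_{\gamma_\chi>0} J_0\!\left(\frac{2|L_\chi(\vec\xi)|}{\varphi(q)|\rho_\chi L'(\rho_\chi,\chi)|}\right) = \prod_{\chi \bmod q}\prod_{\gamma_\chi>0} J_0\!\left(\frac{2|L_\chi(\sigma\vec\xi)|}{\varphi(q)|\rho_\chi L'(\rho_\chi,\chi)|}\right).$$

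Next, I realize $\mu$ as the law of the independent-sum random vector
$$\vec X = \frac{2}{\varphi(q)}\sum_{\chi \bmod q}\sum_{\gamma_\chi>0}\frac{\Re(\zeta_{\chi,\gamma}\,\vec v_\chi)}{|\rho_\chi L'(\rho_\chi,\chi)|}, \qquad \vec v_\chi := (\chi(a_j))_{j=1}^r,$$
with $\zeta_{\chi,\gamma}$ independent uniforms on the unit circle, so that $\sigma\vec X$ has the same form with $\vec v_\chi$ replaced by $\sigma\vec v_\chi := (\chi(a_{\sigma(j)}))_{j=1}^r$. Invoking the linear independence hypothesis (Conjecture \ref{LI}), the argument of \cite[Proposition 3.1]{RS} separates the contributions of distinct $(\chi,\gamma)$ pairs in the above product, reducing the invariance to the character-wise identity $|L_\chi(\vec\xi)|^2 = |L_\chi(\sigma\vec\xi)|^2$ for every $\chi \bmod q$ as polynomials in $\vec\xi$. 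Expanding and comparing off-diagonal coefficients, this is equivalent to $\Re(\chi(a_j a_k^{-1})) = \Re(\chi(a_{\sigma(j)} a_{\sigma(k)}^{-1}))$ for all $j, k$ and all $\chi \bmod q$; by orthogonality of characters on $(\mathbb{Z}/q\mathbb{Z})^*$, this is equivalent to the multiset equality
$$\{a_j a_k^{-1},\; a_k a_j^{-1}\} = \{a_{\sigma(j)} a_{\sigma(k)}^{-1},\; a_{\sigma(k)} a_{\sigma(j)}^{-1}\} \pmod q$$
for every transposition $\sigma \in S_r$ and every pair $(j,k)$.

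Finally, I carry out the combinatorics. Taking $\sigma = (1,2)$ and $k \ge 3$: since $a_1 \not\equiv a_2$, the multiset equality forces $a_1 a_2 \equiv a_k^2 \pmod q$, and permuting indices gives $a_i a_j \equiv a_k^2 \pmod q$ whenever $i, j, k$ are distinct. If $r \ge 4$, picking four distinct indices $i, j, k, \ell$ yields $a_k^2 \equiv a_i a_j \equiv a_i a_\ell$, hence $a_j \equiv a_\ell$, contradicting distinctness; so $r \le 3$. If $r = 2$ the multiset condition is vacuous and the symmetry is immediate from $|\chi(a_1)| = |\chi(a_2)| = 1$. If $r = 3$, the three relations $a_1 a_2 \equiv a_3^2$, $a_1 a_3 \equiv a_2^2$, $a_2 a_3 \equiv a_1^2$ imply $a_1^3 \equiv a_2^3 \equiv a_3^3 \pmod q$; setting $\rho \equiv a_2 a_1^{-1}$, one checks $\rho \not\equiv 1$, $\rho^3 \equiv 1$, and $a_3 \equiv a_1 \rho^2$, exactly the condition in the proposition. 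The converse direction then follows from a direct computation: under these relations $|L_\chi(\vec\xi)|^2$ comes out as a symmetric polynomial in $(\xi_1,\xi_2,\xi_3)$ (using $\Re(\chi(\rho))=\Re(\chi(\rho^2))=-\tfrac12$ whenever $\chi(\rho)\neq 1$). The main obstacle of this plan is the character-wise reduction in the preceding paragraph: showing that the product over $(\chi, \gamma)$ of Bessel functions separates across $\chi$ is the technical heart of the argument, and it is precisely there that the linear independence hypothesis is used, exactly as in \cite[Proposition 3.1]{RS}.
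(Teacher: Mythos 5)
Your proposal is correct and follows essentially the same route as the paper, which simply cites \cite[Proposition 3.1]{RS} and offers no independent argument. You have filled in the combinatorics that RS carry out (the $r\le 3$ bound via $a_ia_j\equiv a_k^2$ for all distinct triples, and the derivation of the cubic-root structure from the three relations $a_ia_j\equiv a_k^2$), and these calculations check out, as does the converse computation that $|L_\chi(\vec\xi)|^2$ becomes a symmetric quadratic form when $\chi(\rho)$ is a cube root of unity. The one place where you flag a gap yourself — the reduction from invariance of the full product $\prod_{\chi,\gamma}J_0(\cdot)$ to the character-wise identity $|L_\chi(\vec\xi)|^2=|L_\chi(\sigma\vec\xi)|^2$ — is genuinely the technical heart, and you correctly defer to RS for it, just as the paper does. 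One small attribution to double-check when you write this up carefully: in RS, the role of the linear independence conjecture is primarily to justify the explicit Fourier transform formula (here Theorem \ref{66}) and the convolution structure behind it; once that formula is in hand, the separation step is an analytic argument about the zero set of the product of Bessel factors (entire functions whose vanishing loci are confocal quadrics with a common diagonal part), and does not literally invoke linear independence again. So it would be cleaner to say LI gives the product formula and the rest is algebra on that formula, rather than placing LI inside the separation step.
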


As a consequence of the symmetry of the density function of $\mu_{q;a_1,\ldots,a_r}$ we obtain 
the next corollary.
\begin{cor}\label{504}
Assume the conditions of Proposition \ref{502}. If either $r=2$ or $r=3$ and there is $\rho\neq1$ 
such that $\rho^3\equiv1$, $a_2\equiv a_1\rho$, and $a_3\equiv a_1\rho^2$ (modulo $q$), then 
$$\delta\big(\{x>0\mid M(x;q,a_1) > M(x;q,a_2) > \cdots > M(x;q,a_r)\}\big)=\dfrac{1}{r!}.$$
\end{cor}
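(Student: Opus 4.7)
The plan is to deduce the corollary directly from Proposition \ref{502} together with the absolute continuity of the limiting distribution and the formula \eqref{deltaform} for the logarithmic density of the race set.

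First, under the stated hypotheses, the discussion immediately preceding Proposition \ref{502} guarantees that $\vec{E}_6(y)$ possesses a limiting distribution $\mu_{q;a_1,\ldots,a_r}$ and that this measure is absolutely continuous with respect to Lebesgue measure on $\mathbb{R}^r$, with density $f_{q;a_1,\ldots,a_r}$. As noted there, this in turn justifies the identity
\[
  \delta(S_{q;a_1,\ldots,a_r}) \;=\; \mu_{q;a_1,\ldots,a_r}\bigl(\{\, x \in \mathbb{R}^r \mid x_1 > x_2 > \cdots > x_r\,\}\bigr),
\]
so the task reduces to evaluating the right-hand side.

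Next, I would invoke Proposition \ref{502}: in the two listed regimes ($r=2$, or $r=3$ together with the cubic-root hypothesis on $a_1,a_2,a_3$ modulo $q$), the density $f_{q;a_1,\ldots,a_r}$ is symmetric under all permutations of its coordinates. For each $\sigma \in S_r$ define
\[
   R_\sigma = \{\, x \in \mathbb{R}^r \mid x_{\sigma(1)} > x_{\sigma(2)} > \cdots > x_{\sigma(r)}\,\}.
\]
The hyperplane union $\{x \in \mathbb{R}^r \mid x_i = x_j \text{ for some } i \neq j\}$ has Lebesgue measure zero and therefore $\mu_{q;a_1,\ldots,a_r}$-measure zero by absolute continuity. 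Hence the $r!$ open regions $R_\sigma$ partition $\mathbb{R}^r$ up to a $\mu_{q;a_1,\ldots,a_r}$-null set, so
\[
   \sum_{\sigma \in S_r} \mu_{q;a_1,\ldots,a_r}(R_\sigma) = 1.
\]
The change of variables $x \mapsto \sigma^{-1} x$ combined with the symmetry of $f_{q;a_1,\ldots,a_r}$ shows that all the summands are equal, giving $\mu_{q;a_1,\ldots,a_r}(R_\sigma) = 1/r!$ for every $\sigma$. Taking $\sigma = \mathrm{id}$ and inserting this into the displayed identity for $\delta(S_{q;a_1,\ldots,a_r})$ yields the claimed value $1/r!$.

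The only nontrivial ingredient is the symmetry statement in Proposition \ref{502}; once that is in hand the remaining argument is a short measure-theoretic averaging over the symmetric group, with the main verification being that the diagonal hyperplanes carry no mass (immediate from absolute continuity). Consequently, I expect no serious obstacle beyond quoting the earlier results; the proof amounts to a clean reduction from \eqref{deltaform} and \eqref{muhatformula} through Proposition \ref{502}.
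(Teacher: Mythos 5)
Your proof is correct and follows exactly the path the paper intends: the paper omits a formal argument for Corollary~\ref{504}, offering only the remark that it is ``a consequence of the symmetry of the density function,'' and your write-up supplies the implicit measure-theoretic averaging over $S_r$ together with the needed observation that the diagonal hyperplanes carry no mass under an absolutely continuous distribution. This is the only natural way to derive the corollary from Proposition~\ref{502} and \eqref{deltaform}, so there is no real divergence from the paper; you have simply made the omitted details explicit, and correctly so.
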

\noindent In particular, if $a_1$ and $a_2$ are distinct residues modulo $q$,  
$$\delta\big(\{x>0\mid M(x;q,a_1) > M(x;q,a_2)\}\big)=\delta\big(\{x>0\mid M(x;q,a_2) > M(x;q,a_1)\}\big)=\dfrac{1}{2}.$$
This shows that the race between the summatory functions of the M\"{o}bius function on two arithmetic progressions is unbiased. 

Our general limiting distribution theorems can be used in 
proposing and studying many new arithmetic problems. For example, let
\begin{equation*}\label{120}
\vec{E}_7(y)=\big(ye^{-y/2}(\pi(e^y)-\mathrm{Li}(e^y)),e^{-y/2}M(e^y)\big).
\end{equation*}
Then Corollaries \ref{118} and \ref{132}(i)

imply that if the Riemann hypothesis and 
\eqref{330} hold, then $\vec{E}_7(y)$ possesses a limiting distribution. If in addition, the linear independence conjecture for the 
zeros of $\zeta(s)$ is true,  then 
$$\Bigg\{x>0\ \bigg|\ \dfrac{\log x}{\sqrt{x}}\big(\pi(x)-\mathrm{Li}(x)\big)>\dfrac{M(x)}{\sqrt{x}}\Bigg\}$$
possesses a logarithmic density. It would be interesting to determine the value of this logarithmic density.
However, this requires further analysis of the constructed distribution. 

As mentioned before, our strategy in the proof of our general limiting distribution theorem will be to prove that $\phi(y)$ is a $B^2$-almost periodic function.  Since Besicovitch \cite[Section 2]{Besicovitch2} proved that $B^2$-almost periodic functions satisfy a Parseval type identity, we deduce the following result.
\begin{theorem}\label{123}
Suppose that the function $\phi(y)$ of \eqref{eq:phi} satisfies the conditions of Theorem \ref{lim-dis} or Corollary \ref{11}. Then we have
\begin{equation}\label{345}
\lim_{Y\to\infty}\dfrac{1}{Y}\int_0^Y \phi(y)^2dy=c^2+{\textstyle\frac{1}{2}}\sum_{n=1}^\infty|r_n|^2.
\end{equation}
\end{theorem}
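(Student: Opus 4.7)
The approach is to combine the $B^2$-almost periodicity of $\phi$ (already established) with Besicovitch's Parseval identity for $B^2$-almost periodic functions. First, by Theorem \ref{lim-dis} (or Corollary \ref{11}), the hypotheses of this theorem guarantee that $\phi$ is $B^2$-almost periodic, so Besicovitch's identity (see \cite[Section 2]{Besicovitch2}) yields
$$\lim_{Y\to\infty}\frac{1}{Y}\int_0^Y \phi(y)^2\, dy = \sum_{\lambda} |a(\lambda)|^2,$$
where $a(\lambda)=\lim_{Y\to\infty}\frac{1}{Y}\int_0^Y \phi(y)\, e^{-i\lambda y}\,dy$ are the $B^2$ Fourier coefficients of $\phi$. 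The entire proof reduces to identifying these coefficients.

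The second step is to compute $a(\lambda)$ explicitly. Write $\phi(y)=c+\tfrac{1}{2}\sum_{\lambda_n\le X}(r_n e^{i\lambda_n y}+\overline{r_n}e^{-i\lambda_n y})+\mathcal{E}(y,X)$ and specialize $X=e^Y$. For a fixed real number $\lambda$, multiply by $e^{-i\lambda y}$ and average over $[0,Y]$. The contribution from the constant $c$ vanishes unless $\lambda=0$, in which case it contributes $c$. The contribution from the term $r_n e^{i\lambda_n y}$ equals $\tfrac{1}{2}r_n$ when $\lambda=\lambda_n$ and is $O(1/Y)$ otherwise; similarly $\overline{r_n}e^{-i\lambda_n y}$ contributes $\tfrac{1}{2}\overline{r_n}$ when $\lambda=-\lambda_n$. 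Finally the error integral is bounded by Cauchy--Schwarz:
$$\left|\frac{1}{Y}\int_0^Y \mathcal{E}(y,e^Y)e^{-i\lambda y}\,dy\right|\le \left(\frac{1}{Y}\int_0^Y |\mathcal{E}(y,e^Y)|^2\,dy\right)^{1/2}\longrightarrow 0$$
by \eqref{9}. Thus $a(0)=c$, $a(\lambda_n)=r_n/2$, $a(-\lambda_n)=\overline{r_n}/2$, and $a(\lambda)=0$ elsewhere.

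Substituting into Besicovitch's Parseval identity immediately yields
$$\lim_{Y\to\infty}\frac{1}{Y}\int_0^Y \phi(y)^2\,dy = c^2+\sum_{n=1}^\infty\Bigl(\tfrac{|r_n|^2}{4}+\tfrac{|r_n|^2}{4}\Bigr) = c^2+\tfrac{1}{2}\sum_{n=1}^\infty |r_n|^2,$$
which is \eqref{345}.

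The main obstacle is the step of extracting the Fourier coefficient $r_n/2$ from the truncated explicit formula when $X=e^Y$ grows with $Y$. One must justify uniform control on the off-diagonal sum $\frac{1}{Y}\sum_{\lambda_m\le e^Y,\,\lambda_m\ne \lambda_n}\frac{r_m}{i(\lambda_m-\lambda_n)}(e^{i(\lambda_m-\lambda_n)Y}-1)$, showing it tends to zero as $Y\to\infty$. This follows by partial summation together with the coefficient bounds \eqref{SI} or \eqref{AI} (respectively the Corollary \ref{11} hypotheses), which are precisely the inputs needed to dominate such tail sums; the same estimates already appear in the proof of Theorem \ref{lim-dis}, so no new analytic ingredient is required. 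Once this is handled, the rest is a clean application of Besicovitch's theorem.
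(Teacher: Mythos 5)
Your proof follows the same route the paper gestures at: establish $B^2$-almost periodicity via Theorem \ref{lim-dis} or Corollary \ref{11}, then invoke Besicovitch's Parseval identity, and finally identify the Fourier coefficients as $a(0)=c$, $a(\pm\lambda_n)=r_n/2$ (resp.\ $\overline{r_n}/2$), with all other $a(\lambda)=0$. The paper leaves the coefficient computation implicit, and your filling-in is correct. One remark: your handling of the growing cutoff $X=e^Y$ is more laborious than necessary. Rather than controlling the off-diagonal sum at the moving cutoff, it is cleaner to fix $T$, split $\phi(y)=\bigl(c+\Re\sum_{\lambda_n\le T}r_n e^{i\lambda_n y}\bigr)+R_T(y)$, and observe that the proof of Theorem \ref{lim-dis} already gives $\|R_T\|_{B^2}\ll (\log T)^{\delta/2}/T^{\eta/2}\to 0$. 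Then for any fixed $\lambda$, by Cauchy--Schwarz the contribution of $R_T$ to $a(\lambda)$ is $\le\|R_T\|_{B^2}$, the head is a fixed trigonometric polynomial with the obvious Fourier coefficients, and letting $T\to\infty$ pins down $a(\lambda)$ exactly; there is no growing off-diagonal sum to control. This gives the same conclusion with less work and no new estimates beyond those already proved.
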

In fact, it is possible to show following an argument of Fiorilli \cite[Lemmas 2.4, 2.5]{F1} that 
\[
\lim_{Y\to\infty}\dfrac{1}{Y}\int_0^Y \phi(y)^2dy = \int_{\mathbb{R}} t^2 d \mu(t) 
\]
where $\mu$ is the limiting distribution associated 
to $\phi$.  A similar argument would also establish that   
\[
\lim_{Y\to\infty}\dfrac{1}{Y}\int_0^Y \phi(y)dy = \int_{\mathbb{R}} t d \mu(t) =c.
\]

As a corollary, we deduce Cramer's result \cite{Cramer} and 
its analogues for the error term of an automorphic $L$-function,  $e^{-y/2}M(e^y)$, and $e^{-y/2}L(e^y)$.
\begin{cor}\label{331}
(i) Let $L(s, \piup)$ be an automorphic $L$-function. If the generalized Riemann hypothesis is true for $L(s, \piup)$, then 
$$\lim_{Y\to\infty}\dfrac{1}{Y}\int_0^Y\bigg(\dfrac{\psi(e^y, \piup)-\delta(e^y, \piup)}{e^{y/2}}\bigg)^2dy=
4\ (\mathrm{ord}_{s=1/2}L(s, \piup))^2+\sum_{\substack{\gamma>0\\L(1/2+i\gamma, \piup)=0}}\dfrac{2 m_{\gamma}^2}{\frac{1}{4}+\gamma^2},$$
where $m_{\gamma}$ denotes the multiplicity of the zero ${1}/{2}+i \gamma$. \\
(ii) If the Riemann hypothesis is true and \eqref{330} holds, then 
$$\lim_{Y\to\infty}\dfrac{1}{Y}\int_0^Y\bigg(\dfrac{M(e^y)}{e^{y/2}}\bigg)^2dy=
\sum_{\substack{\gamma>0 \\ \zeta(\frac{1}{2}+i \gamma)=0}}\dfrac{2}{|\rho\zeta'(\rho)|^2}.$$
(iii) If the Riemann hypothesis is true and \eqref{330} holds, then 
$$\lim_{Y\to\infty}\dfrac{1}{Y}\int_0^Y\bigg(\dfrac{L(e^y)}{e^{y/2}}\bigg)^2dy=
\dfrac{1}{\zeta(\frac{1}{2})^2}+\sum_{\substack{\gamma>0 \\ \zeta(\frac{1}{2}+i \gamma)=0}}2\ \bigg|\dfrac{\zeta(2\rho)}{\rho\zeta'(\rho)}\bigg|^2.$$
\end{cor}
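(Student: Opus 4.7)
\emph{Proof plan.} The strategy in all three parts is to invoke Theorem \ref{123} directly. Once each error term is written in the form \eqref{eq:phi} with a specific constant $c$, exponents $\{\lambda_n\}$, and coefficients $\{r_n\}$, the Parseval-type identity \eqref{345} evaluates the mean square as $c^2 + \tfrac{1}{2}\sum_n |r_n|^2$. The hypotheses of Theorem \ref{lim-dis} (or Corollary \ref{11}) required to apply Theorem \ref{123} have already been verified in the proofs of Corollaries \ref{339} and \ref{132}, so the task reduces to reading off $c$ and the $r_n$ correctly from each explicit formula.

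For part (i), the truncated explicit formula for $\psi(x,\pi)$ under GRH for $L(s,\pi)$, combined with division by $e^{y/2}$, yields
\[
\frac{\psi(e^y,\pi)-\delta(e^y,\pi)}{e^{y/2}} = -2 m_0 - \sum_{\gamma>0} m_\gamma\left(\frac{e^{iy\gamma}}{\tfrac{1}{2}+i\gamma}+\frac{e^{-iy\gamma}}{\tfrac{1}{2}-i\gamma}\right) + \mathcal{E}(y,X),
\]
after isolating the zero at $\rho=\tfrac{1}{2}$ of multiplicity $m_0 = \mathrm{ord}_{s=1/2}L(s,\pi)$ from the zeros $\rho=\tfrac{1}{2}+i\gamma$ with $\gamma>0$. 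Thus $c=-2m_0$, and upon pairing conjugate zeros the coefficient attached to $\lambda_n=\gamma$ is $r_\gamma = -2 m_\gamma/(\tfrac{1}{2}+i\gamma)$. Substituting into \eqref{345} produces $4 m_0^2 + \sum_{\gamma>0} 2 m_\gamma^2/(\tfrac{1}{4}+\gamma^2)$, as claimed.

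For parts (ii) and (iii), we use the classical explicit formulas
\[
M(x) = \sum_\rho \frac{x^\rho}{\rho\zeta'(\rho)} + O(1),\qquad L(x) = \frac{x^{1/2}}{\zeta(\tfrac{1}{2})} + \sum_\rho \frac{\zeta(2\rho)\, x^\rho}{\rho\zeta'(\rho)} + O(1),
\]
both valid under RH together with the simplicity of the nontrivial zeros of $\zeta$, which is itself forced by \eqref{330}. Dividing by $e^{y/2}$ and pairing conjugate zeros gives, for (ii), $c=0$ and $r_\gamma = 2/(\rho\zeta'(\rho))$, and for (iii), $c=1/\zeta(\tfrac{1}{2})$ and $r_\gamma = 2\zeta(2\rho)/(\rho\zeta'(\rho))$. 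Inserting these into \eqref{345} yields exactly the stated mean squares.

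The only point requiring care is verifying that the residual terms absorbed into $O(1)$ and $\mathcal{E}(y,X)$ (the sum over trivial zeros, the residue at $s=0$ in the explicit formula for $M(x)$, the analogous low-order constants for $L(x)$, and the truncation error from the sum over nontrivial zeros) decay in $L^2$ mean so that \eqref{9} holds. This is precisely what has already been checked in the proofs of Corollaries \ref{339} and \ref{132}, so no new estimates are needed; the genuine content here is bookkeeping, namely separating the contribution of any zero at $s=\tfrac{1}{2}$ (which supplies the constant $c$) from the oscillatory contributions, and folding the two-sided sum over zeros on the critical line into a one-sided sum over $\gamma>0$ via complex conjugation.
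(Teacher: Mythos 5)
Your proposal is correct and matches the paper's intended argument: Corollary \ref{331} is a direct application of Theorem \ref{123} (the Parseval identity), using the explicit formulas and the verification of the hypotheses of Corollary \ref{11} that were already carried out in the proofs of Corollaries \ref{339} and \ref{132}. You correctly identify the constant $c$ and the coefficients $r_n$ in each case, including the subtle point in part (i) that a zero $\tfrac12+i\gamma$ of multiplicity $m_\gamma$ contributes the combined Fourier coefficient $r_\gamma=-2m_\gamma/(\tfrac12+i\gamma)$, which is what produces the $m_\gamma^2$ in the stated formula.
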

Note that Theorem \ref{331} (ii) improves Theorem 3 of \cite{N} where the stronger condition $J_{-1}(T) \ll T$ is assumed. 

The rest of this article is organized as follows. In Section 2 we review the background on $B^p$-almost periodic functions and show that almost periodic functions possess limiting distributions. In Section 3, we prove Theorem \ref{lim-dis} and Corollary \ref{11}.  
In Section 4, we deduce Corollaries \ref{339}, \ref{132}, and \ref{118}. In Section 5, we prove Theorem \ref{66}.  
Finally, we mention some notation used throughout this article. 
We write $f(x)=O(g(x))$ or $f(x) \ll g(x)$  to mean
there exists $M >0$ such that $|f(x)| \le M |g(x)|$ for all sufficiently large $x$.

\section{$B^p$-almost periodic functions and limiting distributions}

The main goal in this section is to provide the necessary background on  $B^p$-almost periodic functions needed in the proof of Theorem \ref{117}. It has been known since the 1930's
that any $B^p$-almost periodic function $\phi$\footnote{In this section $\phi$ denotes a Lebesgue integrable function.} possess limiting distributions.
Such a result is mentioned in \cite[Theorems 25 and 27]{JW} and proven in  
\cite[Theorem 8.3]{Wi2a}. However, the authors were unable to find a refereed publication 
from the 1930's which proves this result. The earliest journal publication we are aware of
is \cite{Bleher}, though it only proves the result for $\ell=1$. 
In order to keep our article self-contained, we provide a proof in the general case 
of a vector-valued function. 

We review some facts from the theory of almost periodic functions. 
Let $L_{loc}^p ([0, \infty))$ be the set of locally $p$-integrable functions on $[0, \infty)$. 
For $p\geq1$ and $\phi\in L_{loc}^p ([0, \infty))$, define 
$$\lVert\phi\rVert_{B^p}=\Big(\limsup_{Y\to\infty}\dfrac{1}{Y}\int_0^Y|\phi(y)|^pdy\Big)^{1/p}.$$
Denote by $\mathscr{T}$ the class of all real-valued trigonometric polynomials 
$$P_N(y)=\sum_{n=1}^Nr_ne^{i\lambda_ny}\quad (y\in\mathbb{R}),$$
where $r_n\in\mathbb{C}$ and $\lambda_n\in\mathbb{R}$.
The \emph{$B^p$-closure} of $\mathscr{S}$, denoted $\mathcal{H}_{B^p}(\mathscr{S})$, is the set of functions $\phi\in\mathscr{R}$ that satisfy the following property:\\
For any $\varepsilon>0$ there is a function $f_\varepsilon(y)\in\mathscr{S}$ such that
$$\lVert\phi(y)-f_\varepsilon(y)\rVert_{B^p}<\varepsilon.$$

\begin{definition}\label{1}
Any $\phi\in\cup_{p\geq 1}\mathcal{H}_{B^p}(\mathscr{T})$ is called an \emph{almost periodic function}. If $\phi\in\mathcal{H}_{B^p}(\mathscr{T})$ we say that $\phi$ is a $B^p$-\emph{almost periodic function}.
\end{definition}
\noindent For $\phi\in\mathcal{H}_{B^p}(\mathscr{T})$ and given $\varepsilon>0$ there exists 
\begin{equation}\label{50}
P_{N(\varepsilon)}(y)=\sum_{n=1}^{N(\varepsilon)}r_n(\varepsilon)e^{i\lambda_n(\varepsilon)y}
\end{equation}
in $\mathscr{T}$ such that 
\begin{equation*}
\lVert\phi(y)-P_{N(\varepsilon)}(y)\rVert_{B^p}<\varepsilon.
\end{equation*}
It is an important fact of the theory of almost periodic functions that in \eqref{50}, $\lambda_n(\varepsilon)$ can be taken only from a set $\Lambda(\phi)=\{\lambda_n\mid n\in\mathbb{N}\}$ and the corresponding values for $r_n$ are given by
$$r_n=\lim_{Y\to\infty}\dfrac{1}{Y}\int_0^Y\phi(y)e^{-i\lambda_ny}dy$$
(see \cite{Bleher}).

\begin{definition}\label{2}
A vector-valued function $\vec{\phi}:[0,\infty)\to\mathbb{R}^\ell$, $\vec{\phi}=(\phi_1,\ldots,\phi_\ell)$, is called \emph{almost periodic}, if there is a $p\geq 1$ such that each  component function $\phi_k$ ($1\leq k \leq \ell$) belongs to $\mathcal{H}_{B^p}(\mathscr{T})$. Moreover $\vec{\phi}$ is called $B^p$-\emph{almost periodic} if each $\phi_k$ ($1\leq k \leq \ell$) is $B^p$-{almost periodic}.
\end{definition}
\noindent It is known that $\mathcal{H}_{B^p}(\mathscr{T})\subseteq\mathcal{H}_{B^q}(\mathscr{T})$ if $1\leq q\leq p$ (see \cite[p. 476]{Bleher}). So a vector-valued function is almost periodic if and only if each of its component functions is almost periodic.\\

The following lemma states a version of Kronecker-Weyl equidistribution theorem.
\begin{lemma}\label{34}
Let $t_1,\dots,t_N$ be arbitrary real numbers. Suppose that $A$ is the topological closure of $\left\{y(t_1,\ldots,t_N)\mid y\in\mathbb{R}\right\}/\mathbb{Z}^N$ 
in the torus $\mathbb{T}^{N}$. Let $g:\mathbb{R}^{N}\rightarrow\mathbb{R}$ be a continuous function of period 1 in each of its variables. Then we have
\begin{equation*}\label{35}
\lim_{Y\rightarrow\infty}\dfrac{1}{Y}\int_0^Yg(yt_1,\ldots,yt_N)dy=\int_Ag(a)d\omega
\end{equation*}
where $\omega$ is the normalized Haar measure on $A$.
\end{lemma}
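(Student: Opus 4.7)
The plan is to reduce the claim to the case of trigonometric characters via Stone--Weierstrass, and then verify the equidistribution directly for each character.

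First I would note that since $g$ has period one in each variable it descends to a continuous function on the torus $\mathbb{T}^N = \mathbb{R}^N / \mathbb{Z}^N$. Writing $q:\mathbb{R}^N \to \mathbb{T}^N$ for the quotient map and $t=(t_1,\ldots,t_N)$, the trajectory $\{q(yt):y\in\mathbb{R}\}$ is a one-parameter subgroup of $\mathbb{T}^N$, so its closure $A$ is a closed subgroup of the compact abelian group $\mathbb{T}^N$. In particular $A$ is a compact abelian group carrying a unique normalized Haar measure $\omega$, which is what the statement refers to.

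Next, for each $k\in\mathbb{Z}^N$ set $\chi_k(x)=e^{2\pi i\langle k,x\rangle}$, where the inner product is the standard one on $\mathbb{R}^N$. A direct evaluation gives
$$\frac{1}{Y}\int_0^Y \chi_k(yt)\,dy = \frac{1}{Y}\int_0^Y e^{2\pi i y\langle k,t\rangle}\,dy,$$
which equals $1$ if $\langle k,t\rangle=0$ and tends to $0$ as $Y\to\infty$ otherwise. On the other hand, $\chi_k|_A$ is trivial iff $\chi_k\equiv 1$ on the dense subgroup $\{q(yt):y\in\mathbb{R}\}$, iff $y\langle k,t\rangle\in\mathbb{Z}$ for all $y\in\mathbb{R}$, iff $\langle k,t\rangle=0$. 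By the standard orthogonality of characters for Haar measure on a compact abelian group, $\int_A \chi_k\,d\omega$ is $1$ when $\chi_k|_A$ is trivial and $0$ otherwise. The two sides therefore agree for every character $\chi_k$, and by linearity for every trigonometric polynomial $P(x)=\sum_{|k|\le K}c_k\chi_k(x)$.

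Finally, I would pass from trigonometric polynomials to arbitrary continuous $g$ by uniform approximation. By Stone--Weierstrass the trigonometric polynomials are dense in $C(\mathbb{T}^N)$ in the supremum norm, so given $\varepsilon>0$ one may choose $P$ with $\|g-P\|_\infty<\varepsilon$. Both the time average $\frac{1}{Y}\int_0^Y(g-P)(yt)\,dy$ and the Haar integral $\int_A(g-P)\,d\omega$ are then bounded in absolute value by $\varepsilon$, while $P$ already satisfies the identity exactly; a routine three-epsilon argument closes the proof. The only nonroutine ingredient is the orthogonality relation $\int_A\chi_k\,d\omega\in\{0,1\}$ depending on whether $\chi_k|_A$ is trivial, which is the main conceptual step and is a standard property of Haar measure (integration against a character is the orthogonal projection onto the constants); the remainder of the argument is direct computation followed by uniform approximation.
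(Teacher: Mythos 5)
Your proof is correct. Note that the paper itself offers no argument for this lemma: it simply cites Hlawka and Ratner's theorem. What you have written is a complete, self-contained proof along the classical Weyl lines, and it is essentially the argument one would find in the cited source. The only conceptual points worth spelling out are the ones you already flag: that $A$, being the closure of a one-parameter subgroup, is a closed (hence compact abelian) subgroup of $\mathbb{T}^N$ and so carries a unique normalized Haar measure; that for a character $\chi$ of a compact abelian group one has $\int_A \chi\,d\omega = 1$ if $\chi|_A \equiv 1$ and $0$ otherwise (translation invariance of $\omega$); and that $\chi_k|_A$ is trivial if and only if $\langle k,t\rangle = 0$, which is exactly the dichotomy that governs the time average $\frac{1}{Y}\int_0^Y e^{2\pi i y\langle k,t\rangle}\,dy$. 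Your closing three-epsilon step should be phrased as a bound on the $\limsup$: since the middle term $\left|\frac{1}{Y}\int_0^Y P(yt)\,dy - \int_A P\,d\omega\right|$ only vanishes in the limit $Y\to\infty$, one first lets $Y\to\infty$ to get $\limsup_Y \left|\frac{1}{Y}\int_0^Y g(yt)\,dy - \int_A g\,d\omega\right| \le 2\varepsilon$ and then lets $\varepsilon\to 0$; this is presumably what you mean by "routine," and it is fine. In short, your proof fills in a gap the paper chose to leave as a citation, using the standard and correct route.
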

\begin{proof}
This may be deduced from the Kronecker-Weyl theorem (see \cite[pp. 1--16]{Hlawka}), and is also a special case
of Ratner's theorem on unipotent flows (see \cite{Morris}). 
\end{proof}
\noindent Next we prove that every vector-valued function whose components are real-valued trigonometric polynomials 
has a limiting distribution.
\begin{prop}\label{36}
For $1\leq k\leq\ell$, let $(\lambda_{k,n})_{n=1}^{N_k}$ be a real sequence and $(r_{k,n})_{n=1}^{N_k}$ be a complex sequence. Set
$$P_k(y) = \sum_{n=1}^{N_k}r_{k,n}e^{i\lambda_{k,n}y}\quad(y\in\mathbb{R}).$$
If $P_k(y)\in\mathbb{R}$ for all $y\in \mathbb{R}$, then
$$\vec{P}(y)=\big(P_1(y),\ldots,P_\ell(y)\big)$$
has a limiting distribution.
\end{prop}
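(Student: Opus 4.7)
The plan is to reduce the limiting-distribution statement for $\vec{P}(y)$ to the equidistribution result in Lemma \ref{34}. First I would enumerate the (finite) multiset $\bigcup_{k=1}^\ell \{\lambda_{k,n}\colon 1\le n\le N_k\}$ and extract the distinct frequencies as $\lambda_1,\dots,\lambda_N$. Setting $t_j=\lambda_j/(2\pi)$, each exponential $e^{i\lambda_{k,n}y}$ becomes $e^{2\pi i t_{j(k,n)} y}$, where $j(k,n)$ records the index of $\lambda_{k,n}$ in the enumerated list.

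Next I would define the auxiliary functions $G_k\colon \mathbb{R}^N\to\mathbb{C}$ by
\[
G_k(x_1,\dots,x_N) \;=\; \sum_{n=1}^{N_k} r_{k,n}\, e^{2\pi i\, x_{j(k,n)}}, \qquad 1\le k\le \ell,
\]
together with their real parts $\widetilde{G}_k=\Re(G_k)$. Each $\widetilde{G}_k$ is continuous on $\mathbb{R}^N$ and of period $1$ in every variable. Because $P_k(y)$ is real, we have $P_k(y)=\widetilde{G}_k(yt_1,\dots,yt_N)$ for every $y\in\mathbb{R}$, so $\vec{P}(y)=\widetilde{G}(yt_1,\dots,yt_N)$ with $\widetilde{G}=(\widetilde{G}_1,\dots,\widetilde{G}_\ell)\colon \mathbb{T}^N\to\mathbb{R}^\ell$.

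For an arbitrary bounded continuous $f\colon\mathbb{R}^\ell\to\mathbb{R}$, the composition $g=f\circ\widetilde{G}$ is a continuous real-valued function on $\mathbb{R}^N$ of period $1$ in each variable. Applying Lemma \ref{34} to $g$ and $t_1,\dots,t_N$ yields
\[
\lim_{Y\to\infty}\dfrac{1}{Y}\int_0^Y f\big(\vec{P}(y)\big)\,dy
\;=\;\lim_{Y\to\infty}\dfrac{1}{Y}\int_0^Y g(yt_1,\dots,yt_N)\,dy
\;=\;\int_A g(a)\,d\omega(a),
\]
where $A\subseteq\mathbb{T}^N$ is the closure of the orbit $\{y(t_1,\dots,t_N)\mod \mathbb{Z}^N\colon y\in\mathbb{R}\}$ and $\omega$ is its normalized Haar measure.

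Finally I would define the probability measure $\mu$ on $\mathbb{R}^\ell$ to be the pushforward of $\omega$ under the (continuous, hence Borel-measurable) map $\widetilde{G}|_A\colon A\to\mathbb{R}^\ell$. By the change-of-variables formula
\[
\int_{\mathbb{R}^\ell} f\,d\mu \;=\; \int_A f\circ \widetilde{G}\,d\omega \;=\; \int_A g\,d\omega,
\]
so combining with the previous display gives exactly the condition in Definition \ref{4}, proving that $\mu$ is a limiting distribution for $\vec{P}(y)$. The content of the argument is almost entirely packaged in Lemma \ref{34}; the only point requiring care is choosing the periodic lifts $\widetilde{G}_k$ and verifying that $P_k(y)=\widetilde{G}_k(yt_1,\dots,yt_N)$ holds on the nose, which is where the hypothesis that each $P_k$ is real-valued enters.
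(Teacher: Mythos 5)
Your proof is correct and follows essentially the same route as the paper's: reduce to a toral orbit via the Kronecker--Weyl lemma (Lemma~\ref{34}) and then take the pushforward of the Haar measure on the orbit closure $A$. Your use of $\widetilde{G}_k=\Re(G_k)$ is a slightly cleaner rendering of the same idea, since the paper's map $X$ is written as a $\mathbb{C}^\ell$-valued expression that is only real on $A$, whereas $\widetilde{G}$ is genuinely $\mathbb{R}^\ell$-valued on all of $\mathbb{T}^N$, so $g=f\circ\widetilde{G}$ is unambiguously a continuous real function to which Lemma~\ref{34} applies.
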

\begin{proof}
We consider the set $\cup_{k=1}^{\ell} \cup_{n=1}^{N_k} \{ \lambda_{k,n} \}$ and write its elements in increasing order as the sequence $(\lambda_m)_{m=1}^N$.  For $1\leq k\leq\ell$, we set 
\begin{equation*}
r_k(\lambda_m)=\sum_{\substack{{1\leq n \leq N_k}\\{\lambda_{k, n}=\lambda_m}}} r_{k,n}.
\end{equation*}
Let $f:\mathbb{R}^\ell\to\mathbb{R}$ be a bounded continuous function. Suppose that $X:\mathbb{T}^{N}\rightarrow\mathbb{R}^\ell$ and $g:\mathbb{T}^{N}\rightarrow\mathbb{R}$ are defined by
$$X(\theta_1,\dots,\theta_N)=\bigg(\sum_{m=1}^Nr_1(\lambda_m)e^{2\pi i\theta_m},\ldots,\sum_{m=1}^Nr_\ell(\lambda_m)e^{2\pi i\theta_m}\bigg)$$
and $g(\theta_1,\dots,\theta_N)=f\big(X(\theta_1,\dots,\theta_N)\big)$. 
By applying Lemma \ref{34} with $t_1=\frac{\lambda_1}{2\pi},\ldots,t_N=\frac{\lambda_N}{2\pi}$, we have
$$\lim_{Y\rightarrow\infty}\dfrac{1}{Y}\int_0^Yg\big({\textstyle\frac{y\lambda_1}{2\pi},\ldots,\frac{y\lambda_N}{2\pi}}\big)dy=\int_Ag(a)d\omega,$$
where $A$ is the closure of $\big\{y\big(\frac{\lambda_1}{2\pi},\dots,\frac{\lambda_N}{2\pi}\big)
\mid y\in\mathbb{R}\big\}/\mathbb{Z}^N$ in $\mathbb{T}^{N}$ and $\omega$ is the normalized Haar measure on $A$. 
Define a probability measure $\mu_N$ on $\mathbb{R}^\ell$ by $\mu_N(B)=\omega\big(X^{-1}(B)\cap A\big)$, 
where $B$ is any Borel set in $\mathbb{R}^\ell$. By the change of variable formula 
\cite[Theorem 16.12]{Billingsley},  
\begin{equation}\label{348}
\int_Ag(a)d\omega=\int_{\mathbb{R}^\ell}fd\mu_N
\end{equation}
and thus
\begin{equation*}\label{349}
\lim_{Y\rightarrow\infty}\dfrac{1}{Y}\int_0^Yf\big(\vec{P}(y)\big)dy=\int_{\mathbb{R}^\ell}fd\mu_N,
\end{equation*}
for all bounded continuous real-valued functions $f$ on $\mathbb{R}^\ell$. Therefore, $\vec{P}(y)$ has a limiting distribution.
\end{proof}
Our next goal is to show that every almost periodic function possesses a limiting distribution. 
This requires several concepts from probability. 
\begin{definition}\label{401} 
Let $( \mu_n)_{n  \in \mathbb{N}}$ be a sequence of finite measures on a measurable space $X$. We say that $\mu_n$
\emph{converges weakly} to $\mu$ if for every bounded real-valued continuous function $f$ we have 
\begin{equation}
  \label{weakconv}
\int_X fd\mu_n\to\int_X fd\mu
\end{equation}
as $n\rightarrow \infty$.
\end{definition}
In fact, it is well known that \eqref{weakconv} only needs to be verified for Lipschitz functions. 
\begin{lemma}[Portmanteau]\label{402}
$\mu_n$ converges weakly to $\mu$ if and only if
$$\int_X fd\mu_n\to\int_X fd\mu$$
for any bounded Lipschitz function $f$ on $X$.
\end{lemma}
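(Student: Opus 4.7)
The forward direction is immediate, since every bounded Lipschitz function on a metric space is a bounded continuous function. The substance is therefore the converse: assuming $\int_X f\,d\mu_n\to\int_X f\,d\mu$ for every bounded Lipschitz $f$, deduce the same for every bounded continuous $f$. My plan is the classical Portmanteau route: first pass to inequalities for closed and open sets, and then conclude for general bounded continuous test functions via the layer-cake representation.

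The pivotal step is to show that $\limsup_{n\to\infty}\mu_n(F)\le\mu(F)$ for every closed set $F\subseteq X$, and this is where the Lipschitz hypothesis is actually used. For each $\varepsilon>0$ I would introduce the Urysohn-type cut-off
\[
  f_\varepsilon(x)=\max\bigl(0,\,1-\varepsilon^{-1}d(x,F)\bigr),
\]
which is bounded by $1$, Lipschitz with constant $1/\varepsilon$, and satisfies $\mathbf{1}_F\le f_\varepsilon\le\mathbf{1}_{F^\varepsilon}$, where $F^\varepsilon=\{x:d(x,F)<\varepsilon\}$. Applying the hypothesis to $f_\varepsilon$ yields
\[
  \limsup_{n\to\infty}\mu_n(F)\le\int_X f_\varepsilon\,d\mu\le\mu(F^\varepsilon),
\]
and sending $\varepsilon\to 0^+$ together with downward continuity of the finite measure $\mu$ at $F=\bigcap_{\varepsilon>0}F^\varepsilon$ gives the claim. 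Taking $f\equiv 1$ in the hypothesis gives $\mu_n(X)\to\mu(X)$, and complementing the closed-set inequality with this identity produces the open-set analogue $\liminf_n \mu_n(G)\ge\mu(G)$ for every open $G$.

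With these inequalities in hand, the conclusion follows by a standard layer-cake argument. Splitting an arbitrary bounded continuous $f$ into positive and negative parts, I may assume $0\le f\le M$, and then $\int_X f\,d\mu_n=\int_0^M \mu_n(\{f>t\})\,dt$, similarly for $\mu$. For each $t>0$ the set $\{f>t\}$ is open while $\{f\ge t\}$ is closed, so the two Portmanteau inequalities pinch $\mu_n(\{f>t\})$ between $\mu(\{f>t\})$ and $\mu(\{f\ge t\})$; these values agree for every $t$ with $\mu(\{f=t\})=0$, which excludes at most countably many $t$ because $\mu$ is finite. Dominated convergence on $[0,M]$, using the uniform bound $\sup_n\mu_n(X)<\infty$ inherited from $\mu_n(X)\to\mu(X)$, then yields $\int f\,d\mu_n\to\int f\,d\mu$. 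The only delicate ingredient is the Urysohn-style Lipschitz approximation of $\mathbf{1}_F$; every subsequent step is a routine measure-theoretic manipulation.
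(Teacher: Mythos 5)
Your proof is correct and is the standard argument for this direction of the Portmanteau theorem. The paper itself gives no proof of Lemma \ref{402}, deferring entirely to a citation of Molchanov and Zuyev, so there is no in-paper argument to compare against; your self-contained route (Lipschitz Urysohn cut-offs $f_\varepsilon(x)=\max(0,1-\varepsilon^{-1}d(x,F))$ to get $\limsup_n\mu_n(F)\le\mu(F)$ for closed $F$, complementation via $\mu_n(X)\to\mu(X)$ to get the open-set inequality, then the layer-cake representation with dominated convergence, using that $\mu(\{f=t\})=0$ for all but countably many $t$) is exactly what the cited reference would supply.
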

\begin{proof}
See \cite[Theorem 3.5]{Molch}.
\end{proof}
Next we define the tightness of a sequence of probability measures.
\begin{definition}\label{560}
A sequence $(\mu_n)_{n\in\mathbb{N}}$ of probability measures on $\mathbb{R}^\ell$ is \emph{tight} 
if for any $\varepsilon>0$ there is $A_\varepsilon>0$ such that $\int_{|\mathbf{x}|\geq A_\varepsilon}d\mu_n<\varepsilon$, 
for all $n\in\mathbb{N}$.
\end{definition}
The following lemma illustrates the importance of a tight sequence of measures.
\begin{lemma}[Helly's Selection Theorem]\label{561}
Let $(\mu_n)_{n\in\mathbb{N}}$ be a sequence of probability measures on $\mathbb{R}^\ell$. Then 
$(\mu_n)_{n\in\mathbb{N}}$ is tight if and only if for every subsequence $(\mu_{n_j})_{j \in \mathbb{N}}$ there is a further 
subsequence $(\mu_{n_{j_k}})_{k \in \mathbb{N}}$ and a probability measure $\mu$ such that $\mu_{n_{j_k}}$ 
converges weakly to $\mu$.
\end{lemma}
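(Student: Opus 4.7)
The plan is to establish both implications of the theorem through classical diagonalization and tightness arguments, mirroring the standard proof of Prokhorov's theorem specialized to $\mathbb{R}^\ell$.

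For the forward direction, assume $(\mu_n)_{n\in\mathbb{N}}$ is tight and let $(\mu_{n_j})_{j \in \mathbb{N}}$ be any subsequence. To each $\mu_{n_j}$ I associate its distribution function
$$F_{n_j}(x_1,\ldots,x_\ell)=\mu_{n_j}\bigl((-\infty,x_1]\times\cdots\times(-\infty,x_\ell]\bigr),$$
which is non-decreasing in each coordinate and bounded by $1$. Enumerating $\mathbb{Q}^\ell$ as $(q_m)_{m\in\mathbb{N}}$, I would apply a Cantor diagonal extraction to obtain a further subsequence $(\mu_{n_{j_k}})_{k\in\mathbb{N}}$ such that $F_{n_{j_k}}(q_m)$ converges for every $m$. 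Denoting this pointwise limit on $\mathbb{Q}^\ell$ by $G$ and defining $F(x)=\inf\{G(q)\mid q\in\mathbb{Q}^\ell,\ q>x\ \text{coordinatewise}\}$, the function $F$ is non-decreasing in each coordinate and right-continuous. Standard measure theory then produces a Borel measure $\mu$ on $\mathbb{R}^\ell$ whose distribution function is $F$.

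The key step is to use tightness to upgrade $\mu$ to a probability measure. Given $\varepsilon>0$, pick $A_\varepsilon$ from the tightness hypothesis; then $F_{n_{j_k}}$ assigns mass at least $1-\varepsilon$ to $[-A_\varepsilon,A_\varepsilon]^\ell$ uniformly in $k$, and passing to the limit along continuity points of $F$ gives $\mu([-A_\varepsilon,A_\varepsilon]^\ell)\geq 1-\varepsilon$, forcing $\mu(\mathbb{R}^\ell)=1$. Weak convergence $\mu_{n_{j_k}}\to\mu$ then follows by a standard approximation argument: for any bounded Lipschitz $f$ (which by Lemma \ref{402} suffices), one splits $\int f\, d\mu_{n_{j_k}}$ into the part on a large box where $f$ is well-approximated by simple functions built from continuity points of $F$, plus a tail controlled by tightness.

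For the converse, suppose $(\mu_n)_{n\in\mathbb{N}}$ is not tight. Then there exists $\varepsilon>0$ and a subsequence $(\mu_{n_j})_{j\in\mathbb{N}}$ with $\mu_{n_j}(\{|\mathbf{x}|\geq j\})\geq\varepsilon$ for every $j$. Any sub-subsequence $\mu_{n_{j_k}}$ converging weakly to some probability measure $\mu$ would, upon testing against a continuous cutoff $f_R$ equal to $1$ on $\{|\mathbf{x}|\leq R\}$ and $0$ on $\{|\mathbf{x}|\geq R+1\}$, satisfy $\int f_R\, d\mu=\lim_k\int f_R\, d\mu_{n_{j_k}}\leq 1-\varepsilon$ for each fixed $R$ (since eventually $j_k>R+1$). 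Letting $R\to\infty$ contradicts $\mu(\mathbb{R}^\ell)=1$.

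The main obstacle is the first direction: setting up the candidate limit $\mu$ from a pointwise-convergent net of distribution functions and verifying it has full mass. This is exactly where tightness is essential, since without it the extracted limit could be a strictly sub-probability measure (mass escaping to infinity). Everything else—diagonal extraction, right-continuous regularization of $F$, and the portmanteau-type passage to integrals of Lipschitz functions—is routine once the measure is in hand.
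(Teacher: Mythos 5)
Your argument is essentially the classical Helly selection / Prokhorov argument specialized to $\mathbb{R}^\ell$: Cantor diagonalization on distribution functions restricted to $\mathbb{Q}^\ell$, right-continuous regularization, tightness to prevent mass escape, and portmanteau to conclude weak convergence. The paper does not prove this lemma at all---it simply cites Billingsley, \emph{Probability and Measure}, Theorems 25.8 and 25.10---so there is no ``paper's proof'' to compare against; your sketch is precisely the content of that reference. Both directions of your argument are sound, and the converse in particular is done cleanly via the cutoff functions $f_R$.

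One point worth flagging in the forward direction: in $\ell\geq 2$ dimensions, a function $F$ that is merely non-decreasing in each coordinate and right-continuous need \emph{not} be the distribution function of a measure. One also has to verify that $F$ assigns nonnegative ``mass'' to every box via the inclusion--exclusion alternating sum $\Delta_{(a,b]}F = \sum_{v\in\{0,1\}^\ell}(-1)^{\ell-|v|}F(a+v\odot(b-a))\geq 0$. This holds because each $F_{n_{j_k}}$ satisfies it (being a distribution function), the property passes to the pointwise limit $G$ on $\mathbb{Q}^\ell$, and then to $F$ by approximation from above; only then does ``standard measure theory'' produce $\mu$. A second small gap: you assert convergence $F_{n_{j_k}}(x)\to F(x)$ at continuity points of $F$ implicitly when ``passing to the limit along continuity points'' for the tightness bound, but this requires a short squeeze argument using the rational grid and monotonicity. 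Both are routine, but since they are exactly the places where the $\ell>1$ case differs from $\ell=1$, they deserve a sentence in a complete write-up.
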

\begin{proof}
See \cite[Theorems 25.8 and 25.10]{Billingsley}.
\end{proof}
We are ready to prove the main result of this section.
\begin{theorem}\label{340}
Every almost periodic function possesses a limiting distribution.
\end{theorem}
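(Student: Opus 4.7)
The plan is to approximate $\vec{\phi}$ by vector-valued trigonometric polynomials, equip each approximant with a limiting distribution via Proposition \ref{36}, extract a weak limit via Helly's selection theorem, and identify that weak limit as the limiting distribution of $\vec{\phi}$. Write $\vec{\phi}=(\phi_1,\ldots,\phi_\ell)$, and use the containment $\mathcal{H}_{B^p}(\mathscr{T})\subseteq \mathcal{H}_{B^q}(\mathscr{T})$ for $q\leq p$ to find a common $p\geq 1$ with $\phi_k\in\mathcal{H}_{B^p}(\mathscr{T})$ for every $k$. For each $\varepsilon\in(0,1]$, choose real trigonometric polynomials $P_{k,\varepsilon}\in\mathscr{T}$ with $\lVert\phi_k-P_{k,\varepsilon}\rVert_{B^p}<\varepsilon$, and set $\vec{P}_\varepsilon=(P_{1,\varepsilon},\ldots,P_{\ell,\varepsilon})$, which has a limiting distribution $\mu_\varepsilon$ on $\mathbb{R}^\ell$ by Proposition \ref{36}.

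To prove tightness of $\{\mu_\varepsilon\}_{\varepsilon\in(0,1]}$, note that $\vec{P}_\varepsilon$ is bounded, so $\mu_\varepsilon$ has compact support, and truncating the continuous function $\mathbf{t}\mapsto|t_k|^p$ by bounded continuous functions gives
$$\int_{\mathbb{R}^\ell}|t_k|^p\,d\mu_\varepsilon = \lim_{Y\to\infty}\frac{1}{Y}\int_0^Y|P_{k,\varepsilon}(y)|^p\,dy = \lVert P_{k,\varepsilon}\rVert_{B^p}^p\leq (\lVert\phi_k\rVert_{B^p}+1)^p,$$
uniformly in $\varepsilon$. Markov's inequality then yields tightness, and Lemma \ref{561} produces a sequence $\varepsilon_n\to 0$ and a probability measure $\mu$ on $\mathbb{R}^\ell$ with $\mu_{\varepsilon_n}\to\mu$ weakly.

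To identify $\mu$ as the limiting distribution of $\vec{\phi}$, fix a bounded Lipschitz $f:\mathbb{R}^\ell\to\mathbb{R}$ with Lipschitz constant $L$ (using the $\ell^1$ norm on $\mathbb{R}^\ell$). Jensen's inequality gives
$$\frac{1}{Y}\int_0^Y\bigl|f(\vec{\phi}(y))-f(\vec{P}_\varepsilon(y))\bigr|\,dy\leq L\sum_{k=1}^{\ell}\Bigl(\frac{1}{Y}\int_0^Y|\phi_k(y)-P_{k,\varepsilon}(y)|^p\,dy\Bigr)^{1/p},$$
whose limit superior in $Y$ is at most $L\ell\varepsilon$. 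Combining with $\frac{1}{Y}\int_0^Y f(\vec{P}_{\varepsilon_n}(y))\,dy\to\int f\,d\mu_{\varepsilon_n}$ (Proposition \ref{36}) and $\int f\,d\mu_{\varepsilon_n}\to\int f\,d\mu$ (Lemma \ref{402}), a three-epsilon argument in $\varepsilon_n$ yields
$$\lim_{Y\to\infty}\frac{1}{Y}\int_0^Y f(\vec{\phi}(y))\,dy=\int_{\mathbb{R}^\ell}f\,d\mu$$
for every bounded Lipschitz $f$. One more application of Lemma \ref{402}, now in the reverse direction applied to the pushforward measures $\nu_Y$ defined by $\int f\,d\nu_Y=\frac{1}{Y}\int_0^Y f(\vec{\phi}(y))\,dy$, extends this to all bounded continuous $f$, matching Definition \ref{4}.

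The main obstacle is the tightness step: it requires promoting the $\limsup$-defined $B^p$-norm to an honest $p$-th moment of $\mu_\varepsilon$, and this relies crucially on the boundedness of $\vec{P}_\varepsilon$ (hence compact support of $\mu_\varepsilon$) to justify the truncation argument. The remainder is a standard package of Helly extraction, a three-epsilon comparison of $\vec{\phi}$ to $\vec{P}_{\varepsilon_n}$, and the Portmanteau equivalence between Lipschitz and bounded continuous test functions.
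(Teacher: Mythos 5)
Your proposal is correct and follows the same high-level skeleton as the paper (approximate $\vec{\phi}$ by vector-valued trigonometric polynomials, apply Proposition~\ref{36} to equip the approximants with limiting distributions, extract a weak limit $\mu$ via Helly, identify $\mu$ as the limiting distribution of $\vec{\phi}$), but the tightness step is genuinely different. The paper proves tightness of the family of \emph{empirical} measures $\nu_Y(B)=\frac{1}{Y}\mathrm{meas}\big([0,Y]\cap(\vec{\phi})^{-1}(B)\big)$, reduces everything to $B^1$-approximation, and bounds $\int_{|\mathbf{x}|\geq A_\varepsilon}d\nu_Y$ by a Markov-type estimate comparing $|\vec{\phi}(y)|$ to the sup of the trigonometric approximant $\vec{P}_M$. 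It then applies Helly to $(\nu_Y)$ and uses a sandwich argument (your ``three-epsilon'') to show $\lim_Y\nu_Y(f)=\lim_M\mu_M(f)$ for bounded Lipschitz $f$. You instead keep the exponent $p$, prove tightness of the \emph{approximant} measures $\{\mu_\varepsilon\}$ by bounding their uniform $p$-th moments (using compact support of $\mu_\varepsilon$ and Minkowski for the $B^p$-seminorm), apply Helly to $(\mu_{\varepsilon_n})$, and only then carry out the three-epsilon comparison against $\nu_Y$. Both routes need tightness plus Helly; the paper's proves tightness of $(\nu_Y)$ directly (which is arguably the more canonical statement, since it literally asserts the mass of $\vec{\phi}$ does not escape), while yours buys a cleaner moment-based tightness proof at the cost of an extra reduction step at the end to transfer the weak convergence back to $(\nu_Y)$. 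One small polish worth adding in your version: justify that $\lim_Y\frac{1}{Y}\int_0^Y|P_{k,\varepsilon}(y)|^p\,dy$ is an honest limit (not just a $\limsup$), which holds because $|P_{k,\varepsilon}|^p$ is Bohr almost periodic and hence has a mean value; this is what makes the $p$-th moment identity $\int|t_k|^p\,d\mu_\varepsilon=\lVert P_{k,\varepsilon}\rVert_{B^p}^p$ a clean equality.
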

\begin{proof}
Consider an almost periodic function $\vec{\phi}:[0,\infty)\to\mathbb{R}^\ell$. For $Y\geq1$, let
$$\nu_Y(B)=\dfrac{1}{Y}\mathrm{meas}\big([0,Y]\cap(\vec{\phi})^{-1}(B)\big)$$
for any Borel set $B$ in $\mathbb{R}^\ell$, where $\mathrm{meas}(\cdot)$ is the Lebesgue measure on $\mathbb{R}$. 

Note that, by Definition \ref{401}, $\vec{\phi}(y)$ has a limiting distribution if and only if there exists a probability measure $\mu$ such that the sequence $(\nu_Y)_{Y \in \mathbb{N}}$ 
converges weakly to $\mu$. By Lemma \ref{402} this is equivalent to 
$$\int_{\mathbb{R}^\ell} fd\nu_Y\to\int_{\mathbb{R}^\ell} fd\mu,$$
as $Y\to\infty$, for any bounded Lipschitz function $f:\mathbb{R}^\ell\to\mathbb{R}$. 

Now let $\vec{\phi}(y)=\big(\phi_1(y),\ldots\phi_\ell(y)\big)$ such that $\phi_k(y)$ belongs to $\mathcal{H}_{B^1}(\mathscr{T})$
for $1 \le k \le \ell$.  (Recall that $\mathcal{H}_{B^p}(\mathscr{T})\subseteq \mathcal{H}_{B^1}(\mathscr{T})$ for any $p\geq 1$.) Then for each component $\phi_k(y)$ and for $M\in\mathbb{N}$, there exists $N_k(M) \in \mathbb{N}$
and sequences $( r_{k,n})_{n=1}^{N_k}$  and $(\lambda_{k,n})_{n=1}^{N_k}$  such that 
\begin{equation}\label{350}
\limsup_{Y\to\infty}\dfrac{1}{Y}\int_0^Y\Big|\phi_k(y)-\sum_{n=1}^{N_k(M)}r_{k,n}e^{iy\lambda_{k,n}}\Big|dy<\dfrac{1}{M}.
\end{equation}
By Proposition \ref{36},
\begin{equation}
\label{Papp}
\vec{P}_M(y)=\bigg(\sum_{n=1}^{N_1(M)}r_{1,n}e^{iy\lambda_{1,n}},\ldots,\sum_{n=1}^{N_\ell(M)}r_{\ell,n}e^{iy\lambda_{\ell,n}}\bigg)
\end{equation}
has a limiting distribution $\mu_M$, i.e.
$$\lim_{Y\to\infty}\dfrac{1}{Y}\int_0^Yf\big(\vec{P}_M(y)\big)dy=\int_{\mathbb{R}^\ell}f(x) d\mu_M(x):=\mu_M(f),$$
for all bounded continuous functions $f:\mathbb{R}^\ell\to\mathbb{R}$. From now on for a probability measure $\nu$ on $\mathbb{R}^\ell$ and a function $g$,  we shall make use of the notation 
\[ 
  \nu(g) =\int_{\mathbb{R}^\ell}g(x) d\nu(x).
\]

Let $f:\mathbb{R}^\ell\to\mathbb{R}$ be a bounded Lipschitz function which satisfies 
\[
   |f(x)-f(y)| \le c_f |x-y|
\]
for all $x, y \in \mathbb{R}^\ell$ where $c_f$ is the Lipschitz constant. Then we have
\begin{eqnarray}\label{42}
\dfrac{1}{Y}\int_0^Yf\big(\vec{\phi}(y)\big)dy\leq\dfrac{1}{Y}\int_0^Yf\big(\vec{P}_M(y)\big)dy+\dfrac{c_f}{Y}\int_0^Y\big|\vec{\phi}(y)-\vec{P}_M(y)\big|dy
\end{eqnarray}
and
\begin{eqnarray}\label{43}
\dfrac{1}{Y}\int_0^Yf\big(\vec{\phi}(y)\big)dy\geq\dfrac{1}{Y}\int_0^Yf\big(\vec{P}_M(y)\big)dy-\dfrac{c_f}{Y}\int_{0}^Y\big|\vec{\phi}(y)-\vec{P}_M(y)\big|dy
\end{eqnarray}
for any $Y>0$ and $M\in \mathbb{N}$. Moreover,
$$\dfrac{1}{Y}\int_0^Y\big|\vec{\phi}(y)-\vec{P}_M(y)\big|dy\leq\sum_{k=1}^\ell\dfrac{1}{Y}\int_0^Y\Big|\phi_k(y)-\sum_{n=1}^{N_k(M)}r_{k,n}e^{iy\lambda_{k,n}}\Big|dy.$$
If we apply the latter inequality in \eqref{42} and \eqref{43} and take $\limsup$ and $\liminf$ as $Y\to\infty$, respectively, by employing \eqref{350} we obtain
\begin{equation}\label{37}
\mu_M(f)-\ell c_f/M\leq\liminf_{Y\to\infty}\dfrac{1}{Y}\int_0^Yf\big(\vec{\phi}(y)\big)dy\leq\limsup_{Y\to\infty}\dfrac{1}{Y}\int_0^Yf\big(\vec{\phi}(y)\big)dy\leq\mu_M(f)+\ell c_f/M.\\
\end{equation}
These inequalities imply that 
$L(f):=\lim_{Y\to\infty}\frac{1}{Y}\int_0^Yf\big(\vec{\phi}(y)\big)dy$ 
exists. Moreover, \eqref{37} implies that 
\begin{equation} \label{351}
\lim_{Y\to\infty}\nu_Y(f)=\lim_{M\to\infty}\mu_M(f)=L(f)
\end{equation}
exists for every bounded Lipschitz function $f:\mathbb{R}^\ell\to\mathbb{R}$.

We next show that $(\nu_Y)_{Y \in \mathbb{N}}$ is tight, i.e. for any $\varepsilon>0$, there is $A_\varepsilon>0$ such that 
\begin{equation*} \label{352}
\int_{|\mathbf{x}|\geq A_\varepsilon}d\nu_Y<\varepsilon
\end{equation*}
for all $Y\in \mathbb{N}$.
Let $\varepsilon>0$ be given. We choose a natural number $M$ such that ${\ell}/M<\varepsilon$. By \eqref{350} and \eqref{Papp}, there exists a vector function $\vec{P}_M(y)$ with trigonometric polynomials as its components such that
\begin{equation}
\label{difference}
|\vec{\phi}(y)-\vec{P}_M(y)|<{\ell}/M<\varepsilon,
\end{equation}
where $|.|$ denote the Euclidean norm in $\mathbb{R}^\ell$. Let $$A_\varepsilon=\sup_{y\in [0, \infty]} |\vec{P}_M(y)|+1.$$
Now by employing \eqref{difference} we have
$$\int_{|\mathbf{x}|\geq A_\varepsilon}d\nu_Y=\dfrac{1}{Y}\mathrm{meas}\big\{0\leq y \leq Y,~|\vec{\phi}(y)|>A_\varepsilon\big\}\leq \dfrac{1}{Y} \int_{0}^{Y} |\vec{\phi}(y)-\vec{P}_M(y)| dy<\varepsilon.$$
Hence $(\nu_Y)_{Y\in \mathbb{N}}$ is tight, as we stated.
Thus, by Lemma \ref{561}, 
there is a subsequence $(\nu_{Y_j})_{j \in \mathbb{N}}$ of $(\nu_Y)_{Y \in \mathbb{N}}$ and a probability measure $\mu$ on $\mathbb{R}^\ell$ 
such that 
\begin{equation*}
L(f)=\lim_{j\to\infty}\nu_{Y_j}(f)=\mu(f).
\end{equation*}
This together with \eqref{351} shows that
\begin{equation} \label{353}
  \lim_{Y\to\infty}\nu_{Y}(f)=\lim_{M\to\infty}\mu_{M}(f)=\mu(f),
\end{equation}
for every bounded Lipschitz function $f:\mathbb{R}^\ell\to\mathbb{R}$ and the proof is complete.
\end{proof}
\section{Proof of the main theorem}

The goal in this section is to prove Theorem \ref{117}. By Theorem \ref{340}, we know that $\vec{\phi}(y)$ has a limiting distribution if $\vec{\phi}(y)$ is a $B^2$-almost periodic function. Since $\vec{\phi}(y)$ is a $B^2$-almost periodic function if and only if each of its component functions $\phi_k(y)$ is $B^2$-almost periodic, Theorem \ref{117} will follow as a consequence of Theorem \ref{lim-dis}.

The proof of Theorem \ref{lim-dis} under conditions given in (a) uses a lemma of Gallagher. The proof under the assumptions given in (b) follows an argument first employed by Cram\'{e}r \cite{Cramer} and later used by Ng \cite{N}.

For a proof of the following lemma see \cite[Lemma 1]{Gal}.
\begin{lemma}[Gallagher]\label{gal}
Let $(\nu_n )_{n \in \mathbb{N}}$ be an arbitrary sequence of real numbers and $(c_n)_{n \in \mathbb{N}} \subset\mathbb{C}$. 
Assume that $f(x)=\sum_{n=1}^\infty c_ne^{2\pi i\nu_nx}$ is absolutely convergent. Then, for $U\geq0$,
$$\int_{-U}^U|f(x)|^2dx\ll\dfrac{1}{U^2}\int_{-\infty}^\infty\Big|\sum_{t<\nu_n\leq t+1/U}c_n\Big|^2dt.$$
\end{lemma}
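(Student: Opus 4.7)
The plan is to deduce the bound from an application of Plancherel's theorem, following the classical idea of recognizing $f$ (up to an explicit Fourier kernel) as the transform of a step function built from the $c_{n}$'s. I would first rewrite the binned sum as
\[
S(t) \;:=\; \sum_{t < \nu_{n} \le t + 1/U} c_{n} \;=\; \sum_{n} c_{n}\, \mathbf{1}_{[\nu_{n} - 1/U,\,\nu_{n})}(t).
\]
The absolute convergence of $\sum c_{n} e^{2\pi i \nu_{n} x}$ forces $\sum |c_{n}| < \infty$, so $S \in L^{1}(\mathbb{R}) \cap L^{2}(\mathbb{R})$, and this legitimizes all the termwise Fourier manipulations below.

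The pivotal step is a direct computation of $\widehat{S}(\xi)$. Interchanging the sum and integral, evaluating $\int_{\nu_{n} - 1/U}^{\nu_{n}} e^{-2\pi i t\xi}\,dt$ in closed form, and recollecting the sum $\sum_{n} c_{n} e^{-2\pi i \nu_{n}\xi} = f(-\xi)$ yield
\[
\widehat{S}(\xi) \;=\; \frac{1}{U}\, e^{i\pi \xi/U}\, \operatorname{sinc}(\xi/U)\, f(-\xi),
\]
where $\operatorname{sinc}(y) := \sin(\pi y)/(\pi y)$. Squaring and invoking Plancherel's identity $\int|\widehat{S}|^{2} = \int|S|^{2}$ gives the weighted form of the desired estimate:
\[
\int_{-\infty}^{\infty} \operatorname{sinc}^{2}(\xi/U)\, |f(\xi)|^{2}\, d\xi \;=\; U^{2}\int_{-\infty}^{\infty} |S(t)|^{2}\, dt.
\]

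The remaining task is to convert this weighted identity into a bound for $\int_{-U}^{U} |f|^{2}\, d\xi$. Using the elementary inequality $|\operatorname{sinc}(y)| \ge 2/\pi$ on $|y| \le 1/2$, the identity immediately controls $\int_{-U/2}^{U/2} |f|^{2}\, d\xi$ by an absolute constant times the right-hand side of the lemma. To extend the control from the half-interval to the full interval $[-U, U]$, I would apply the same estimate after the rescaling $U \mapsto 2U$ and absorb the change of bin width into the implicit constant. The main technical obstacle is precisely that $\operatorname{sinc}(\xi/U)$ vanishes at the endpoints $\xi = \pm U$, precluding a direct pointwise lower bound on all of $[-U, U]$; the rescaling trick, or equivalently working with a Fej\'er-type kernel $\operatorname{sinc}^{2}(\xi/(2U))$ in place of $\operatorname{sinc}(\xi/U)$ in the Fourier identity, is what bridges this endpoint gap and produces the lemma with an absolute implied constant.
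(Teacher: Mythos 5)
The paper does not prove this lemma; it cites Gallagher \cite[Lemma 1]{Gal} and moves on. Your Plancherel strategy is the classical route. Writing $S(t)=\sum_n c_n\,\mathbf{1}_{[\nu_n-1/U,\nu_n)}(t)$ and computing $\widehat S(\xi)=\tfrac1U e^{i\pi\xi/U}\operatorname{sinc}(\xi/U)f(-\xi)$, Plancherel together with $\operatorname{sinc}(y)\ge 2/\pi$ on $|y|\le1/2$ gives the half-interval estimate $\int_{-U/2}^{U/2}|f|^{2}\ll U^{2}\int_{\mathbb{R}}|S|^{2}$, and all of this is correct. But notice that the prefactor you obtain is $U^{2}$, not the $U^{-2}$ appearing in the printed statement. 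The printed statement has a sign error in the exponent, as the single-term case $f(x)=e^{2\pi i\nu_{0}x}$ already shows: $\int_{-U}^{U}|f|^{2}=2U$, yet $U^{-2}\int|S|^{2}=U^{-3}$. (This typo is harmless in the paper, where the $1/(V+1)^2$ factor is discarded anyway.) The deeper problem is your final step.

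Replacing $U$ by $2U$, or equivalently passing to the Fej\'er weight $\operatorname{sinc}^{2}(\xi/(2U))$, does widen the spectral window to $[-U,U]$, but it simultaneously shrinks the bin to width $1/(2U)$, and the claim that this change ``can be absorbed into the implicit constant'' is false: $\int|S_{1/(2U)}|^{2}$ need not be $\ll\int|S_{1/U}|^{2}$, because the wider bin may cancel. Take $\nu_{k}=k/(2U)$ and $c_{k}=(-1)^{k}$ for $k=0,\dots,2N$. In the bulk a width-$1/U$ bin contains exactly two consecutive indices whose coefficients cancel, so $\int|S_{1/U}|^{2}\ll 1/U$, whereas a width-$1/(2U)$ bin contains a single index, so $\int|S_{1/(2U)}|^{2}\asymp N/U$. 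The same example shows the inequality on the full interval $[-U,U]$ with bin width $1/U$ (even after fixing the exponent to $U^{2}$) is simply not true: the $\nu_k$ lie in $\tfrac{1}{2U}\mathbb{Z}$, so the exponentials are orthogonal over $[-U,U]$ and $\int_{-U}^{U}|f|^{2}=2U\sum_k|c_k|^{2}=2U(2N+1)$, which for large $N$ dwarfs $U^{2}\int|S_{1/U}|^{2}=O(U)$. The version your Plancherel computation actually establishes, namely integration over $[-U/2,U/2]$ with bin width $1/U$, is the correct one (Gallagher's lemma carries an implicit constraint of the shape $T\delta\le\tfrac12$ between the window half-length $T$ and the bin width $\delta$), and that weaker form is all that is used in the proof of Theorem~\ref{lim-dis}.
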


\begin{proof}[Proof of Theorem \ref{lim-dis}]
Let $X>T>1$ and $V\geq0$.  Assume either of the conditions of Theorem \ref{lim-dis}.
We shall begin by showing that there exist $\delta\geq0$ and $\eta>0$ such that
\begin{equation}\label{alm-per}
\int_{V}^{V+1}\Big|\sum_{T<\lambda_n\leq X}r_ne^{iy\lambda_n}\Big|^2dy\ll 
\dfrac{(\log T)^\delta}{T^\eta}.
\end{equation}
First assume that Condition (a) in Theorem \ref{lim-dis} holds. Then
$$\int_V^{V+1}\Big|\sum_{T<\lambda_n\leq X}r_ne^{iy\lambda_n}\Big|^2dy\leq2\pi
\int_{-(V+1)}^{V+1}\Big|\sum_{T<\lambda_n\leq X}r_ne^{2\pi iy\lambda_n}\Big|^2dy.$$
Lemma \ref{gal} implies
\setlength\arraycolsep{0.1em}
\begin{eqnarray*}
\int_{-(V+1)}^{V+1}\Big|\sum_{T<\lambda_n\leq X}r_ne^{2\pi iy\lambda_n}\Big|^2dy
&&\ll\dfrac{1}{(V+1)^2}\int_{-\infty}^\infty\Big|\sum_{\substack{T<\lambda_n\leq X\\t<\lambda_n\leq t+\frac{1}{V+1}}}
r_ne^{2\pi it\lambda_n}\Big|^2dt\\&&\leq\int_{-\infty}^\infty\Big(
\sum_{\substack{T<\lambda_n\leq X\\t<\lambda_n\leq t+1}}|r_n|\Big)^2dt.
\end{eqnarray*}
In the last integral, $t$ satisfies $T-1\leq t\leq X$. From \eqref{SI} and $\beta>1/2$, we have
$$\int_{-\infty}^\infty\Big(\sum_{\substack{T<\lambda_n\leq X\\t<\lambda_n\leq t+1}}|r_n|\Big)^2dt
\leq\int_{T-1}^X\Big(\sum_{t<\lambda_n\leq t+1}|r_n|\Big)^2dt\ll\int_{T-1}^X
\dfrac{(\log t)^{2\gamma}}{t^{2\beta}}dt\ll\dfrac{(\log T)^{2\gamma}}{T^{2\beta-1}}.$$
So \eqref{alm-per} holds for $\delta=2\gamma$ and $\eta=2\beta-1$.

Next assume that (b) holds. Note that, by dyadic summations, \eqref{AI} and $\alpha \ge \beta$ imply
\begin{equation*}
\label{relation2}
\sum_{\lambda_n\leq T} |r_n|\ll T^{\alpha-\beta}(\log{T})^{\gamma+1}.
\end{equation*}
Thus, by partial summation, we conclude that if $\kappa>\alpha-\beta$ and $\nu>0$, then
\begin{equation}
\label{partial}
\sum_{\lambda_n>T} \frac{|r_n|(\log{\lambda_n})^{\nu}}{\lambda_n^\kappa}
\ll \frac{(\log{T})^{\gamma+\nu+1}}{T^{\kappa-\alpha+\beta}}.
\end{equation}
Since $|z|^2=z\bar{z}$, we have
\setlength\arraycolsep{0.1em}
\begin{eqnarray*}
\int_{V}^{V+1}\Big|\sum_{T<\lambda_n\leq X}r_ne^{iy\lambda_n}\Big|^2dy
&&=\sum_{T<\lambda_n\leq X}\sum_{T<\lambda_m\leq X}r_n\overline{r_m}\int_{V}^{V+1}e^{iy(\lambda_n-\lambda_m)}dy\\
&&\ll\sum_{T<\lambda_n\leq X}\sum_{T<\lambda_m\leq X}|r_nr_m|\min\left(1,\dfrac{1}{|\lambda_n-\lambda_m|}\right)
=\Sigma_1+\Sigma_2,
\end{eqnarray*}
where $\Sigma_1$ is the sum of those terms for which we have $|\lambda_n-\lambda_m|<1$, and $\Sigma_2$ 
is the sum of the rest of the terms. For $\Sigma_1$, by employing \eqref{AI} and \eqref{partial} we have
\setlength\arraycolsep{0.1em}
\begin{equation}
 \label{Sigma1bd}
\Sigma_1\leq\sum_{T<\lambda_n\leq X}|r_n|\sum_{\lambda_n-1 < \lambda_m < \lambda_n+1}|r_m|
\ll\sum_{\lambda_n>T}\frac{|r_n|(\log{\lambda_n})^{\gamma}}{\lambda_n^\beta}
\ll\frac{(\log{T})^{2\gamma+1}}{T^{2\beta-\alpha}}.
\end{equation}
Note that the last inequality is justified since \eqref{alphacond} implies that $\beta>\alpha-\beta$. 
To study $\Sigma_2$, we define for any $T\geq1$
$$S_T(U)=\sum_{\substack{\lambda_m> T\\|U-\lambda_m|\geq1}}\dfrac{|r_m|}{|U-\lambda_m|},$$
where $U\geq T$. Then we can write
\setlength\arraycolsep{0.1em}
\begin{eqnarray*}\label{19}
\Sigma_2=\sum_{T<\lambda_n\leq X}|r_n|\sum_{\substack{T<\lambda_m\leq X\\|\lambda_n-\lambda_m|\geq1}}
\dfrac{|r_m|}{|\lambda_n-\lambda_m|}\leq\sum_{T<\lambda_n\leq X}|r_n|S_T(\lambda_n).
\end{eqnarray*}
We determine an upper bound for $S_T(U)$ as follows. Let $0<c<1$ and $T\geq1$ be fixed. 
For any number $U\geq T$ consider the set of numbers $U^c$, $U-U^c$, and $U-1$. 
Either of the following cases occurs
\begin{equation*}\label{346}
T\leq U^c,\quad U^c<T\leq U-U^c,\quad U-U^c<T\leq U-1,\quad \mathrm{or}\quad U-1<T\leq U.
\end{equation*}
Suppose that the first case happens, i.e. $T\leq U^c$. Then
\setlength\arraycolsep{0.1em}
\begin{equation*}\label{20}
S_T(U)=\bigg(\sum_{T<\lambda_m\leq U^c}+\sum_{U^c < \lambda_m\leq U-U^c}+\sum_{U-U^c < \lambda_m\leq U-1}\nonumber
+\sum_{U+1\leq \lambda_m\leq U+U^c}+\sum_{U+U^c < \lambda_m\leq 2U}+\sum_{\lambda_m >  2U}\bigg)
\dfrac{|r_m|}{|U-\lambda_m|}.
\end{equation*}
Denote these six sums by $\sigma_1,\dots,\sigma_6$. Then, by applying \eqref{AI}, we deduce
\begin{eqnarray*}\label{21}
\sigma_1&&\leq\dfrac{1}{U-U^c}\sum_{T < \lambda_m \leq U^c}|r_m|
\ll  \frac{(U^c-T)^{\alpha} (\log U)^{\gamma}}{(U-U^c)T^{\beta}} \ll \frac{(\log{U})^\gamma}{U^{1-c\alpha}},
\end{eqnarray*}
\begin{eqnarray*}\label{22}
\sigma_2&&\leq\dfrac{1}{U^c}\sum_{U^c<\lambda_m\leq U-U^c}|r_m|\ll 
\frac{1}{U^c} \frac{(U-2U^c)^{\alpha} \log(U-U^c)^{\gamma}}{(U^c)^{\beta}}
\ll \frac{(\log{U})^\gamma}{U^{c+c\beta-\alpha}},
\end{eqnarray*}
\begin{eqnarray*}\label{23}
\sigma_3&&\leq\sum_{U-U^c < \lambda_m \leq U-1}|r_m|\ll
\frac{(U^c)^{\alpha}(\log U)^{\gamma}}{(U-U^c)^{\beta}}
\ll \frac{(\log{U})^\gamma}{U^{\beta-c\alpha}},
\end{eqnarray*}
\begin{eqnarray*}\label{24}
\sigma_4&&\leq\sum_{U+1 \leq \lambda_m \leq U+U^c}|r_m|\ll  
\frac{(U^c)^{\alpha}(\log U)^{\gamma}}{U^{\beta}} 
\ll \frac{(\log{U})^\gamma}{U^{\beta-c\alpha}},
\end{eqnarray*}
and
\begin{eqnarray*}\label{25}
\sigma_5&&\leq\dfrac{1}{U^c}\sum_{U+U^c < \lambda_m\leq 2U} |r_m| \ll
\frac{1}{U^c} \frac{U^{\alpha} (\log U)^{\gamma}}{U^{\beta}}
\ll \frac{(\log{U})^\gamma}{U^{c+\beta-\alpha}}.
\end{eqnarray*}
For $\sigma_6$, we divide the interval of summation into subintervals $(2^kU,2^{k+1}U]$ to get
\begin{eqnarray*}\label{26}
\sigma_6&&\leq\sum_{k=1}^\infty\dfrac{1}{(2^k-1)U}\sum_{2^kU < \lambda_m \leq 2^{k+1}U}|r_m|
\ll\left(\sum_{k=1}^\infty\dfrac{k^\gamma}{2^{k(\beta+1-\alpha)}}\right) \dfrac{(\log U)^\gamma}{U^{\beta+1-\alpha}}
\ll \frac{(\log{U})^\gamma}{U^{\beta+1-\alpha}},
\end{eqnarray*}
which is justified since $\alpha<\beta+1$ by \eqref{alphacond}. 
We observe that $\sigma_6 \ll \sigma_5 \ll \sigma_2$, $\sigma_4 \ll \sigma_3$, and $\sigma_1 \ll \sigma_3$ since $\beta \le 1$.
Thus we have 
\[
  S_T(U) \ll \sigma_2+\sigma_3 \ll \frac{(\log{U})^\gamma}{U^{c+c\beta-\alpha}}+
   \frac{(\log{U})^\gamma}{U^{\beta-c\alpha}}.
\]
In the last inequality we choose $c=\frac{\alpha+\beta}{\alpha+\beta+1}$
and hence  if $T\leq U^c$, then
\begin{equation*}
\label{S_n}
S_T(U)\ll \frac{(\log{U})^\gamma}{U^{\xi}},
\end{equation*}
where 
$$\xi=\frac{\beta^2-\alpha^2+\beta}{\alpha+\beta+1}.$$
By similar arguments, we find the same bound for $S_T(U)$ in the three other cases of \eqref{346}. 
Condition $\alpha^2+\alpha/2<\beta^2+\beta$ yields $\xi>\alpha-\beta$. Hence \eqref{partial} implies
\begin{equation}
\label{Sigma2bd}
\Sigma_2\ll\sum_{\lambda_n> T}|r_n|S_T(\lambda_n)\ll \frac{(\log{T})^{2\gamma+1}}{T^{\xi-\alpha+\beta}},
\end{equation}
where 
$$\xi-\alpha+\beta=\frac{2(\beta^2-\alpha^2)+(2\beta-\alpha)}{\alpha+\beta+1}.$$
By \eqref{Sigma1bd} and \eqref{Sigma2bd}, we have 
$$\Sigma_1+\Sigma_2 \ll \frac{(\log{T})^{2\gamma+1}}{T^{\xi-\alpha+\beta}},$$
since $\alpha,\beta >0$ implies that  $\xi-\alpha+\beta<2\beta-\alpha$.
Thus \eqref{alm-per} holds for $\delta=2\gamma+1$ and $\eta=\xi-\alpha+\beta$.

Now we show that \eqref{alm-per} together with \eqref{9} imply that $\phi(y)$ is a $B^2$-almost periodic function. 
It follows from \eqref{eq:phi} that for $e^{Y}>T \ge X_0$ and $y\geq y_0$, 
\begin{equation*} \label{27}
  \phi(y) - c - \Re \Big( \sum_{\lambda_n \le T}r_ne^{i\lambda_ny} \Big) 
   = \Re \Big( \sum_{T <\lambda_n \le e^{Y}}r_ne^{i\lambda_ny} \Big)+ \mathcal{E}(y,e^{Y}). 
\end{equation*}
Then, by employing \eqref{alm-per} and \eqref{9}, we obtain
\setlength\arraycolsep{0.1em}
\begin{eqnarray*}\label{28}
&& \limsup_{Y\to\infty}\dfrac{1}{Y}\int_{y_0}^Y\Big| \phi(y) - c - \Re \Big( \sum_{\lambda_n \le T}r_ne^{i\lambda_ny} \Big)  \Big|^2dy \\
&&\ll\limsup_{Y\to\infty}\dfrac{1}{Y}\int_{y_0}^Y\Big|\sum_{T<\lambda_n\leq e^Y}
r_ne^{iy\lambda_n}\Big|^2dy +\lim_{Y\to\infty}\dfrac{1}{Y}\int_{y_0}^Y|\mathcal{E}(y,e^Y)|^2dy\nonumber\\
&&\ll\limsup_{Y\to\infty}\dfrac{1}{Y}\sum_{j=0}^{\lfloor Y-y_0\rfloor}\int_{y_0+j}^{y_0+j+1}\Big|
\sum_{T<\lambda_n\leq e^Y}r_ne^{iy\lambda_n}\Big|^2dy\ll \dfrac{(\log T)^\delta}{T^\eta}.\nonumber\\
\end{eqnarray*}
This inequality together with 
$$\lim_{Y \to \infty} \frac{1}{Y} \int_{0}^{y_0}  \Big|\phi(y)-c-\Re \Big( \sum_{\lambda_n \le T} r_n e^{i \lambda_n y} \Big) \Big|^2 dy =0$$
imply that $\phi(y)$ is $B^2$-almost periodic. Hence, the theorem follows from 
Theorem \ref{340}.
\end{proof}
We next prove Corollary \ref{11}.
\begin{proof}[Proof of Corollary \ref{11}]
(a) Since $r_n \ll \lambda_n^{-\beta}$ then \eqref{lambda} imply that
$$\sum_{T<\lambda_n \leq T+1}|r_n|\ll\dfrac{\log T}{T^\beta}.$$
Now Theorem \ref{lim-dis}(a) implies the result.\\
(b) By partial summation, using \eqref{sec-mom} and $\theta<2$, we have 
\setlength\arraycolsep{0.1em}
\begin{eqnarray*}\label{31}
\sum_{\lambda_n\geq S}|r_n|^2&&=2\int_S^\infty\bigg(\sum_{\lambda_n\leq t}
\lambda_n^2|r_n|^2\bigg)\dfrac{dt}{t^3}+\lim_{X\rightarrow\infty}X^{-2}
\bigg(\sum_{\lambda_n\leq X}\lambda_n^2|r_n|^2\bigg)-S^{-2}
\bigg(\sum_{\lambda_n\leq S}\lambda_n^2|r_n|^2\bigg)\nonumber\\
&&\ll \int_S^\infty t^{\theta-3}dt+S^{\theta-2}\ll S^{\theta-2}.
\end{eqnarray*}
By employing this bound, \eqref{lambda}, and Cauchy's inequality we have
$$\sum_{S<\lambda_n\leq T} |r_n| \ll \frac{(T-S)^{\frac{1}{2}} 
(\log{T})^{\frac{1}{2}}}{S^{1-\frac{\theta}{2}}}.$$
Now we choose $\alpha=1/2$, $\beta=1-\theta/2$, and $\gamma=1/2$, and employ
Theorem \ref{lim-dis}. If $1 \le \theta<3-\sqrt{3}$ the conditions given in (b) in Theorem \ref{lim-dis} are satisfied. 
Note that this also implies the result for $0\leq \theta < 1$ since in this case $\sum_{\lambda_n \le T} \lambda_n^2 |r_n|^2 \ll T^{\theta} \le T$.
\end{proof}

\section{Applications of the Main Theorem}
\noindent In this section, by applying Theorem \ref{117}, we prove Corollaries \ref{339}, \ref{132}, and \ref{118}. 
\subsection*{3.1. Proof of Corollary \ref{339}}$~$
\medskip\par
\noindent{\bf  Error term of the prime number theorem for automorphic $\boldsymbol{L}$-functions.}
Let $\piup$ be an irreducible unitary cuspidal automorphic representation of ${\rm GL}_d(\mathbb{A}_\mathbb{Q})$ and let $L(s, \piup)$ be the automorphic $L$-function attached to $\piup$. We follow the notation in the introduction. For $\delta>0$ 
let 
$$\mathbb{C}(\delta)=\mathbb{C}\setminus\{z\in\mathbb{C}\mid
|z+\mu_\piup(j)+2k|\leq\delta,\ 1\leq j\leq d,\ k\geq0\}.$$
We need the following lemma.
\begin{lemma}\label{800}
(i) Let $\sigma\leq -1/2$ then for all $s=\sigma+it\in\mathbb{C}(\delta)$,
$$\dfrac{L'(s, \piup)}{L(s, \piup)}\ll\log{|s|}.$$
(ii) For any integer $m\geq2$, there is $T_m$ with $m\leq T_m\leq m+1$ such that
$$\dfrac{L'(\sigma\pm iT_m, \piup)}{L(\sigma\pm iT_m, \piup)}\ll\log^2{T_m}$$
uniformly for $-2\leq\sigma\leq2$.\\
(iii) For $T\geq2$, the number $N(T, \piup)$ of the zeros of $L(s, \piup)$ in the region $0\leq\Re(s)\leq1$, 
$|\Im(s)|\leq T$ satisfies 
$$N(T+1, \piup)-N(T, \piup)\ll\log{T}$$
and
$$N(T, \piup)\ll T\log{T}.$$
(iv) There is a constant $0\leq\theta<1/2$ such that for all $1\leq j \leq d$,
\begin{equation}
\label{R-bound}
|\alpha_\pi(p, j)|\leq p^\theta ~~\textrm{and}~~|\Re(\mu_\piup(j))|\leq\theta.
\end{equation}
\end{lemma}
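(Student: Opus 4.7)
The plan is to treat all four parts as standard facts from the analytic theory of $\mathrm{GL}_d$ automorphic $L$-functions, assembled from the functional equation, the Hadamard factorization of $\Phi(s,\pi)$, and known bounds on the local parameters. Part (iv) is not really something to prove from scratch in this article: the bound $|\alpha_\pi(p,j)|\le p^\theta$ and $|\Re(\mu_\pi(j))|\le \theta$ with some $0\le \theta<1/2$ is precisely the Luo--Rudnick--Sarnak bound toward the Ramanujan conjecture for $\mathrm{GL}_d(\mathbb{A}_\mathbb{Q})$, so I would simply state it and cite that work (and note the Jacquet--Shalika bound $\theta<1/2$ as the relevant input).

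For part (iii), the Riemann--von Mangoldt type formula $N(T,\pi) = \frac{d}{\pi} T \log T + O(T)$ for general $\mathrm{GL}_d$ automorphic $L$-functions is classical; I would cite, for instance, Iwaniec--Kowalski. The short-interval estimate $N(T+1,\pi)-N(T,\pi)\ll \log T$ then follows from the standard argument bounding the number of zeros in a unit box by $\Re\sum_\rho \frac{1}{1+(T-\gamma)^2}$, which in turn is estimated using the partial fraction expansion of $\Phi'/\Phi$ coming from Hadamard's factorization, evaluated at $s=2+iT$ where $L'/L$ is absolutely convergent.

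For part (i), I would take the logarithmic derivative of the functional equation
$$\Phi(s,\pi)=\epsilon_\pi Q_\pi^{1/2-s}\,\Phi(1-s,\tilde\pi),$$
giving
$$\frac{L'(s,\pi)}{L(s,\pi)} = -\log Q_\pi - \frac{L_\infty'(s,\pi_\infty)}{L_\infty(s,\pi_\infty)} - \frac{L_\infty'(1-s,\tilde\pi_\infty)}{L_\infty(1-s,\tilde\pi_\infty)} - \frac{L'(1-s,\tilde\pi)}{L(1-s,\tilde\pi)}.$$
When $\sigma\le -1/2$, the last term is uniformly bounded because $1-s$ lies in the half-plane of absolute convergence of the Dirichlet series for $L(s,\tilde\pi)$ (using (iv) to get absolute convergence for $\Re(1-s)\ge 3/2$). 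The Archimedean contributions are $O(\log|s|)$ by Stirling applied to each $\Gamma_\mathbb{R}$-factor, provided $s$ stays away from the trivial zero locus, which is exactly what $s\in\mathbb{C}(\delta)$ ensures.

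Part (ii) is the heart of the lemma, so I expect it to be the main obstacle, and I would handle it in the standard Davenport style. Starting from the Hadamard product
$$\frac{\Phi'(s,\pi)}{\Phi(s,\pi)} = B_\pi + \sum_{\rho}\Bigl(\frac{1}{s-\rho}+\frac{1}{\rho}\Bigr),$$
and subtracting the expression evaluated at $s=2+it$ to eliminate the $1/\rho$ terms, one obtains
$$\frac{L'(s,\pi)}{L(s,\pi)} = \sum_{|t-\gamma|\le 1}\frac{1}{s-\rho} + O(\log(|t|+2))$$
uniformly for $-2\le \sigma\le 2$. By the local zero count $N(T+1,\pi)-N(T,\pi)\ll \log T$ from (iii), between any two consecutive integers $m,m+1$ there are $\ll \log m$ ordinates $\gamma$, so one can choose $T_m\in[m,m+1]$ whose distance to every such $\gamma$ is $\gg 1/\log m$. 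Inserting this spacing bound into the sum over zeros at distance $\le 1$ yields $\sum_{|T_m-\gamma|\le 1}|s-\rho|^{-1}\ll (\log T_m)(\log T_m)$, hence the stated $\log^2 T_m$ bound. The only delicate step is verifying the uniformity over $-2\le \sigma\le 2$, which follows because the dominant contribution comes from the zeros near $t=T_m$ rather than from $\sigma$. With these, the lemma is established.
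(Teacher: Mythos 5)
Your four sketches are all correct, but they are doing something different from the paper in spirit: the paper's ``proof'' of Lemma 3.1 is purely a list of citations --- Moreno \cite{Mor} for (i) (with the remark that the ${\rm GL}_2$ case generalizes), Liu--Ye \cite[Lemma 4.3(a)(d)]{LY} for (ii) and (iii), and Rudnick--Sarnak \cite{R-S} for (iv). You instead reproduce the standard arguments that those references themselves use: the functional-equation/Stirling reduction for (i), the Hadamard product and the ``$\sum 1/(s-\rho)$ with a well-spaced $T_m$'' device for (ii), the Riemann--von Mangoldt count and its localization for (iii), and the Jacquet--Shalika/Luo--Rudnick--Sarnak bound for (iv). This is entirely consistent with the paper; what you gain is self-containedness, while the paper gains brevity and defers uniformity-in-$\pi$ bookkeeping to the references.

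Two small points worth tightening if you were to expand (ii) into a full proof. First, the pigeonhole selection of $T_m$ should be stated carefully: by (iii) there are $\ll \log m$ zero ordinates in $[m-2,m+2]$, so removing intervals of length $c/\log m$ around each of them still leaves a point $T_m\in[m,m+1]$, and it is this $T_m$ that is far from every $\gamma$ with $|\gamma-T_m|\le 1$. Second, the identity $\frac{L'}{L}(s)=\sum_{|t-\gamma|\le 1}\frac{1}{s-\rho}+O(\log(|t|+2))$ on $-2\le\sigma\le 2$ is obtained not just by ``subtracting at $2+it$'' but by also using the density bound from (iii) to control $\sum_{|t-\gamma|>1}$ and Stirling to control $L_\infty'/L_\infty(s)$; in particular $\frac{L'}{L}(2+it)=O(1)$ uses $2>1+\theta$, i.e.\ (iv). These are exactly the steps the Liu--Ye lemma packages. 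Finally, in (i) one should read $\log|s|$ as $\log(|s|+2)$ (the statement inherits a slight abuse for $|s|$ near $1/2$), and the term $L'/L(1-s,\tilde\pi)$ is $O(1)$ uniformly because $\Re(1-s)\ge 3/2>1+\theta$, which again rests on (iv) --- so it is good that you flagged (iv) as an input and not an output of the lemma.
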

\begin{proof}
For (i) see \cite[p. 177]{Mor} for ${\rm GL}_2$ automorphic $L$-functions. The general case is similar. See \cite[ Lemma 4.3(a)(d)]{LY} for (ii) and (iii). For (iv) see \cite[p. 275]{R-S}. Note that in (ii), and (iii) the implied constants depend on $\pi$, and in (i) the implied constant depends on $\delta$ and $\pi$.
\end{proof}
We now establish an explicit formula for $\psi(x, \piup)$.
\begin{prop}\label{801}
Let $\theta$ be the constant given in \eqref{R-bound}. For all $x>1$ and $T\geq 2$ we have 
\begin{equation}\label{802}
\psi(x, \piup)-\delta(x, \piup)=R_\piup-\sum_{|\Im(\rho)|\leq T}\dfrac{x^\rho}{\rho}+
O\Big(\dfrac{x^{\theta+1}\log^2 x}{T}+x^\theta \log{x}+\dfrac{x\log^2{T}}{T\log x}
+\dfrac{x\log{T}}{T}\Big),
\end{equation}
where $\rho$ runs over the nontrivial zeros of $L(s, \piup)$ with $|\Im(\rho)|\leq T$, and
$$R_\piup=\left\{
\begin{array}{lll}
-\dfrac{L'(0, \piup)}{L(0, \piup)}&\text{if}\ L(0, \piup)\neq 0,\\
-\log x-\dfrac{L''(0, \piup)}{2L'(0, \piup)}&\text{if}\ L(0, \piup)=0.
\end{array}
\right.
$$
The implied constant in \eqref{802} depends on $\delta$ in Lemma \ref{800} and $\pi$.
\end{prop}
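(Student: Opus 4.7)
The plan is to establish the explicit formula by the standard contour integration method: apply a truncated Perron formula to express $\psi(x,\piup)$ as a contour integral of $-L'(s,\piup)/L(s,\piup)\cdot x^s/s$, then shift the contour to the left and collect residues. I would begin with the Dirichlet series representation
\[
-\frac{L'(s,\piup)}{L(s,\piup)} = \sum_{n=1}^\infty \frac{\Lambda(n) a_\piup(n)}{n^s} \qquad (\Re(s) > 1),
\]
and apply the truncated Perron formula at $c = 1 + 1/\log x$ to obtain
\[
\psi(x,\piup) = \frac{1}{2\pi i}\int_{c-iT}^{c+iT} \left(-\frac{L'(s,\piup)}{L(s,\piup)}\right)\frac{x^s}{s}\, ds + E_{\mathrm{P}}.
\]
Using the Ramanujan-type bound $|a_\piup(p^k)| \le d\, p^{k\theta}$ from Lemma \ref{800}(iv) in the standard Perron error analysis, together with trivial bounds for terms $n$ very close to $x$, gives $E_{\mathrm{P}} \ll x^{\theta+1}(\log x)^2/T + x^\theta \log x + x \log T /T$, which accounts for three of the four error terms in the statement.

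I would then choose $T = T_m \in [m,m+1]$ as in Lemma \ref{800}(ii) so that $-L'/L$ behaves well on the horizontal segments, and shift the contour from $\Re(s) = c$ to $\Re(s) = -1/2$. Lemma \ref{800}(iv) ensures that the only trivial zero potentially crossed is the isolated one at $s = -\mu_\piup(j)$ (with real part in $[-\theta,\theta]$), since for $k \ge 1$ one has $\Re(-\mu_\piup(j) - 2k) \le -2 + \theta < -1$; a small horizontal perturbation of the contour handles this. The residue theorem then contributes three terms: a simple pole of $-L'/L$ at $s = 1 + i\tau_0$ in the exceptional case $L(s,\piup) = \zeta(s-i\tau_0)$, yielding $\delta(x,\piup)$; each nontrivial zero $\rho$ with $|\Im(\rho)| \le T$ contributes $-x^\rho/\rho$ with multiplicity; and the pole of $x^s/s$ at $s=0$ combined with $-L'/L$ near $0$ gives $R_\piup$. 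In the case $L(0,\piup) \neq 0$ this residue is directly $-L'(0,\piup)/L(0,\piup)$. In the case $L(0,\piup) = 0$, the Laurent expansion $L'(s,\piup)/L(s,\piup) = 1/s + L''(0,\piup)/(2L'(0,\piup)) + O(s)$ multiplied by $x^s/s = 1/s + \log x + O(s)$ yields residue $-\log x - L''(0,\piup)/(2L'(0,\piup))$, matching $R_\piup$.

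It remains to bound the boundary integrals arising from the rectangular shift. On the horizontal segments $\Im(s) = \pm T_m$ with $\Re(s) \in [-1/2,c]$, Lemma \ref{800}(ii) gives $|L'/L| \ll \log^2 T$, so their contribution is
\[
\ll \frac{\log^2 T}{T}\int_{-1/2}^{c} x^\sigma\, d\sigma \ll \frac{x \log^2 T}{T \log x},
\]
producing the fourth error term. On the left vertical segment $\Re(s) = -1/2$, Lemma \ref{800}(i) gives $|L'/L| \ll \log|t|$, leading to a contribution $\ll x^{-1/2}\log^2 T$ that is absorbed by the others. Combining all pieces, transferring $\delta(x,\piup)$ to the left-hand side, and using the sign convention of the residue theorem produce the claimed identity. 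The main obstacle is the delicate bookkeeping needed to make the individual error terms match the four-term bound in the statement---in particular, the interplay between the $\theta$-dependent Perron error and the $T$-dependent contour errors. The zero-density bound $N(T+1,\piup)-N(T,\piup) \ll \log T$ from Lemma \ref{800}(iii) is essential to guarantee that the chosen $T = T_m$ can be kept uniformly away from the ordinates of nontrivial zeros, so that Lemma \ref{800}(ii) applies on the horizontal segments.
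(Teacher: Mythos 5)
Your overall strategy is the same as the paper's: apply a truncated Perron formula at $c = 1+1/\log x$, shift the contour to the left, collect residues, and bound the boundary integrals using the bounds of Lemma~\ref{800}. However, there are two genuine gaps in the bookkeeping.

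First, your derivation only establishes the formula for $T = T_m$, not for arbitrary $T \ge 2$ as the statement requires. The statement's $x\log T/T$ error term does not come from the Perron error, as you claim; it is the cost of converting the sum over $|\gamma| \le T_m$ to a sum over $|\gamma| \le T$ for an arbitrary $T\in[m,m+1]$. This is precisely where Lemma~\ref{800}(iii) is needed: with $|T-T_m|\le 1$ and $N(T+1,\piup)-N(T,\piup)\ll\log T$, the difference
\[
\sum_{T_m \le |\Im(\rho)|\le T}\frac{x^\rho}{\rho} \ \ll\ \frac{x\log T}{T}.
\]
Your stated reason for citing Lemma~\ref{800}(iii) --- that it is ``essential to guarantee that the chosen $T=T_m$ can be kept uniformly away from the ordinates'' --- is off; the existence of a good $T_m$ is the content of Lemma~\ref{800}(ii) itself, and the zero-density bound plays no role in it. Without the $T_m\to T$ step your argument proves a weaker statement.

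Second, when you shift the contour to $\Re(s) = -1/2$, the rectangle contains the $k=0$ trivial zeros $s=-\mu_\piup(j)$, whose real parts lie in $(-\theta,\theta)\subset(-1/2,1/2)$. You correctly note that no further trivial zeros are crossed, but you then enumerate only three classes of residues ($\delta(x,\piup)$, the nontrivial zeros, and $R_\piup$), omitting the residues at these $k=0$ trivial zeros. They contribute $O(x^\theta)$, which is absorbed by the $x^\theta\log x$ term, but they must be accounted for. (The paper sidesteps this by taking $U\to-\infty$ and bounding the entire sum over trivial zeros by $x^\theta$.) Your ``small horizontal perturbation of the contour'' remark addresses a nonexistent problem --- the line $\Re(s)=-1/2$ misses all trivial zeros since $\theta<1/2$ --- while the actual issue, the omitted residues, goes unaddressed.
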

\begin{proof}
Recall that for $\Re(s)>1$, we have $$-\frac{L^\prime(s, \piup)}{L(s, \piup)}=\sum_{n=1}^{\infty} \frac{\Lambda(n) a_\piup(n)}{n^s}.$$
From \eqref{a_pi} and Lemma \ref{800}(iv) we conclude that $|a_{\pi}(n)|\leq d n^\theta$. 
Let $c=1+1/\log x$, and $T_m$ be as in Lemma \ref{800}(ii). By Perron's formula 
\cite[p. 70, Lemma 3.19]{T} we obtain
\begin{equation}
  \label{psixup}
\psi(x, \piup)=\frac{1}{2\pi i}\int_{c-iT_m}^{c+iT_m}\Big(-\dfrac{L'(s, \piup)}{L(s, \piup)}\Big)\ \dfrac{x^s}{s}ds+
O\Big(\dfrac{x^{\theta+1}\log^2 x}{T}+x^\theta \log{x} \Big).
\end{equation}
Let $U<-1/2$ and $\delta>0$ be such that $U\pm it\in\mathbb{C}(\delta)$ for $t\in [-T_m, T_m]$. Consider the contour which consists of the rectangle 
$\mathcal{C}$ with vertices $c+iT_m,c-iT_m,U+iT_m,U-iT_m$. By the residue theorem, we have
\begin{multline}\label{803}
\frac{1}{2\pi i}\int_{c-iT_m}^{c+iT_m}\Big(-\dfrac{L'(s, \piup)}{L(s, \piup)}\Big)\ \dfrac{x^s}{s}ds=\delta(x, \piup)+R_\piup-
\sum_{\substack{U\leq\Re(\mu)\leq\theta\\|\Im(\mu)|\leq T}}\dfrac{x^\mu}{\mu}-
\sum_{\substack{0\leq\Re(\rho)\leq1\\|\Im(\rho)|\leq T}}\dfrac{x^\rho}{\rho}\\
+\frac{1}{2\pi i}\bigg(\int_{c-iT_m}^{U-iT_m}+\int_{U-iT_m}^{U+iT_m}+\int_{U+iT_m}^{c+iT_m}\bigg)
\Big(-\dfrac{L'(s, \piup)}{L(s, \piup)}\Big)\ \dfrac{x^s}{s}ds,
\end{multline}
where the first and the second sums run over the trivial and the non-trivial zeros of $L(s, \piup)$ inside the 
rectangle $\mathcal{C}$, respectively. If we follow the argument in \cite[pp. 174--178]{Mor} and employ Lemma 
\ref{800}(i) and (ii), we get the following estimates for the integrals on the right-hand side of \eqref{803}. We have
$$\frac{1}{2\pi i}\bigg(\int_{c-iT_m}^{U-iT_m}+\int_{U+iT_m}^{c+iT_m}\bigg)\Big(-\dfrac{L'(s, \piup)}{L(s, \piup)}\Big)\ \dfrac{x^s}{s}ds
\ll \dfrac{x\log^2{T_m}}{T_m\log x},$$
and
$$\frac{1}{2\pi i}\int_{U-iT_m}^{U+iT_m}\Big(-\dfrac{L'(s, \piup)}{L(s, \piup)}\Big)\ \dfrac{x^s}{s}ds \ll\dfrac{T_mx^U\log{|U+iT_m|}}{|U|}.$$
Now we let $U\to-\infty$ through admissible values and note that $(T_mx^U\log{|U+iT_m|})/|U|\to0$. Moreover, Lemma \ref{800}(iv) implies
$$\sum_{\substack{-\infty\leq\Re(\mu)\leq\theta\\|\Im(\mu)|\leq T}}\dfrac{x^\mu}{\mu}\ll
x^\theta\left(1+\sum_{\substack{1\leq j\leq d\\k >0}}\dfrac{x^{-2k}}{\Re(\mu_\piup(j))-2k} \right)\ll x^\theta.$$
Inserting the above estimates in  \eqref{803} together with \eqref{psixup} establishes \eqref{802} in the case
$T=T_m$. Now note that if we change $T_m$ by an arbitrary 
$T\in[m,m+1]$, then we have the same estimate as in \eqref{802}, since by Lemma \ref{800}(iii), we have
$$\sum_{\substack{0\leq\Re(\rho)\leq1\\T_m\leq|\Im(\rho)|\leq T}}\dfrac{x^\rho}{\rho}+
\sum_{\substack{0\leq\Re(\rho)\leq1\\T\leq|\Im(\rho)|\leq T_m}}\dfrac{x^\rho}{\rho}
\ll\dfrac{x\log{T}}{T}.$$
This completes the proof.
\end{proof}
We now show that, under the assumption of the generalized Riemann hypothesis, 
$$E_1(y, \piup)=\dfrac{\psi(e^y, \piup)-\delta(e^y,\piup)}{e^{y/2}}$$
has a limiting distribution.
By pairing the conjugate zeros $\rho=1/2+i\gamma$ and $\bar{\rho}=1/2-i\gamma$ in \eqref{802}, 
for $y>0$ and $X\geq2$, we get
$$E_1(y, \piup)=-2{\rm ord}_{s=1/2}L(s, \piup)+\Re\bigg(\sum_{0<\gamma\leq X}\dfrac{-2e^{i\gamma y}}{\rho}\bigg)+\mathcal{E}_\piup(y,X),$$
where ${\rm ord}_{s=1/2}L(s, \piup)$ is equal to the multiplicity of the zero of $L(s, \piup)$ at $s=1/2$ if $L(1/2, \piup)=0$ 
and ${\rm ord}_{s=1/2}L(s, \piup)=0$ otherwise, and $\mathcal{E}_\piup(y,X)$ satisfies
$$\mathcal{E}_\piup(y,X)=O\bigg(\dfrac{y^2 e^{y(1/2+\theta)}}{X}+ye^{y(\theta-1/2)}+
\dfrac{e^{y/2}\log^2{X}}{yX}+\dfrac{e^{y/2}\log{X}}{X}\bigg).$$
Note that Condition \eqref{9} for $y_0>0$ is satisfied for $\mathcal{E}_\piup(e^y,e^Y)$. 
Setting $r_n=-2/\rho_n$ and $\lambda_n =\Im(\rho_n)$ where the non-trivial zeros of $L(s, \piup)$ are labelled
$(\rho_n)_{n \in \mathbb{N}}$, we obtain from Lemma \ref{800}(iii)
$$ \sum_{\lambda_n \le T} \lambda_n^2 |r_n|^2 = 
\sum_{0<\gamma\leq T}\dfrac{4\gamma^2}{|\rho|^2}\ll\sum_{0<\gamma\leq T}1\ll T\log{T}.$$
Hence, assuming the generalized Riemann hypothesis for $L(s,\piup)$, Corollary \ref{11}(b) implies that $E_1(y, \piup)$ has a limiting distribution.

\subsection*{3.2. Proof of Corollary \ref{132}}
$~$
\medskip\par
Before proceeding we require the following two lemmas.  The first lemma derives an explicit formula for sums of the shape 
$\sum_{n \le x} a_n n^{-\alpha}$.   
\begin{lemma}\label{105}
Let $( a_n )_{n \in \mathbb{N}}$ be a bounded sequence.  Assume there exist complex functions $F(w)$ and $G(w)$ such that
$\sum_{n=1}^{\infty} a_n n^{-w}=\frac{F(w)}{G(w)}$ for $\Re(w) > 1$. 
 Let $x > 1$, $\alpha\in[0,1]$, $\beta\in\mathbb{R}$, $c=1-\alpha+1/\log x$, $b \ne \alpha$, and $b<c+\alpha$. \\
 Assume the following three conditions hold:\\
(i) For any $t>0$, within and on the box $\mathcal{B}_t$ with vertices $c+\alpha+it,b+it,b-it,c+\alpha-it$, $F(s)$ is either holomorphic or it has a simple pole of residue $d_0$ at $s_0 \in (b,c+\alpha)$, and $G(s)$ is holomorphic with simple zeros at $\rho_1,\ldots,\rho_{J}$ inside $\mathcal{B}_t$ and each different from $s_0$. In the case $F(s)$ has a simple pole at $s=s_0$,
let $d_1$ be the value of $F(s)-d_0/(s-s_0)$ at $s=s_0$.  \\
(ii) There exists $\delta=\delta(b)\in\mathbb{R}-\{0\}$ such that
$$\dfrac{F(b+it)}{G(b+it)}=O((|t|+1)^\delta).$$
(iii) There exists an increasing sequence of positive numbers $(T_m)_{m \in \mathbb{N}}$ tending to infinity such that 
$$\dfrac{F(\sigma\pm iT_m)}{G(\sigma \pm iT_m)}=O(T_m^\beta),$$
uniformly on $b\leq\sigma\leq c+\alpha$. \\
Then for $\alpha \in [0,1]$ and $x >1$, 
$$\sum_{n\leq x}\dfrac{a_n}{n^\alpha}=R_{\alpha, s_0}(x)+\sum_{\substack{j=1 \\ \rho_j \in \mathcal{B}_t}}^{J}\dfrac{F(\rho_j)x^{\rho_j-\alpha}}{(\rho_j-\alpha)G'(\rho_j)}+O\Big(\dfrac{x^{1-\alpha}\log x}{T_m}+\dfrac{x^cT_m^{\beta-1}}{\log x}+x^{b-\alpha}(T_m^{\delta}+1)+x^{-\alpha}\Big),$$
where
\begin{equation}
\label{Ralpha}
R_{\alpha, s_0}(x)=\left\{\begin{array}{lll}
0&\quad \text{if} &\quad \alpha, s_0\not\in (b, c+\alpha),\\
\dfrac{F(\alpha)}{G(\alpha)}&\quad\text{if}&\quad\alpha\in (b, c+\alpha), \ s_0\not \in (b,c+\alpha),\\
\dfrac{d_0 x^{s_0-\alpha}}{(s_0-\alpha)G(s_0)}
&\quad\text{if}&\quad\alpha\not\in (b,c+\alpha),\ s_0\in (b, c+\alpha),\\
\dfrac{d_0 x^{s_0-\alpha}}{(s_0-\alpha)G(s_0)}+\dfrac{F(\alpha)}{G(\alpha)}
&\quad\text{if}&\quad\alpha, s_0 \in (b, c+\alpha),\ \alpha\neq s_0,\\
\dfrac{d_0\log x}{G(s_0)}+\dfrac{d_1}{G(s_0)}-\dfrac{d_0G'(s_0)}{G^2(s_0)}
&\quad\text{if}&\quad \alpha, s_0 \in (b, c+\alpha),\ \alpha=s_0.\\
\end{array}\right.
\end{equation}
\end{lemma}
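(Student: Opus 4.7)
The plan is to derive the formula by applying a truncated Perron formula to the Dirichlet series $\sum a_n/n^{\alpha+w} = F(w+\alpha)/G(w+\alpha)$, and then shifting the resulting contour to pick up residues. Substituting $s = w+\alpha$, the truncated Perron identity applied to the bounded sequence $(a_n)$ gives, for $x>1$ not an integer,
\begin{equation*}
\sum_{n\leq x}\frac{a_n}{n^\alpha}=\frac{1}{2\pi i}\int_{(c+\alpha)-iT_m}^{(c+\alpha)+iT_m}\frac{F(s)}{G(s)}\,\frac{x^{s-\alpha}}{s-\alpha}\,ds+O\!\left(\frac{x^{1-\alpha}\log x}{T_m}+x^{-\alpha}\right),
\end{equation*}
since $c+\alpha = 1 + 1/\log x$ lies just to the right of the abscissa of absolute convergence and $|a_n|$ is uniformly bounded. (The $x^{-\alpha}$ term accounts for $n=1$ and the usual boundary correction when $x$ is near an integer.)

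Next I would move the contour from the line $\Re(s)=c+\alpha$ to the line $\Re(s)=b$, integrating along the rectangle $\mathcal{B}_{T_m}$ of condition (i). The integrand $\frac{F(s)}{G(s)}\cdot\frac{x^{s-\alpha}}{s-\alpha}$ has potential poles from three sources inside $\mathcal{B}_{T_m}$: the simple zeros $\rho_j$ of $G$, a possible simple pole of $F$ at $s_0$, and the simple pole at $s=\alpha$ coming from the factor $1/(s-\alpha)$. A direct residue computation yields $\frac{F(\rho_j)x^{\rho_j-\alpha}}{(\rho_j-\alpha)G'(\rho_j)}$ from each zero $\rho_j$, $\frac{d_0\,x^{s_0-\alpha}}{(s_0-\alpha)G(s_0)}$ from the pole of $F$ (when $s_0\in(b,c+\alpha)$ and $s_0\neq\alpha$), and $\frac{F(\alpha)}{G(\alpha)}$ from the pole at $\alpha$ (when $\alpha\in(b,c+\alpha)$). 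In the borderline case $\alpha=s_0$ the two simple poles coalesce into a double pole, whose residue I compute by expanding $F(s)=d_0/(s-s_0)+d_1+O(s-s_0)$ and differentiating $\frac{F(s)(s-s_0)\,x^{s-s_0}}{G(s)}$ at $s_0$; the result is exactly $d_0\log x/G(s_0)+d_1/G(s_0)-d_0 G'(s_0)/G(s_0)^2$, matching the last case of \eqref{Ralpha}.

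It remains to estimate the integrals on the three new sides of $\mathcal{B}_{T_m}$. On the horizontal segments $\Im(s)=\pm T_m$, condition (iii) gives the bound $\frac{F(s)}{G(s)}\ll T_m^\beta$, so
\begin{equation*}
\int_{b\pm iT_m}^{(c+\alpha)\pm iT_m}\frac{F(s)}{G(s)}\frac{x^{s-\alpha}}{s-\alpha}\,ds\ll \frac{T_m^{\beta}}{T_m}\int_{b}^{c+\alpha}x^{\sigma-\alpha}\,d\sigma\ll \frac{x^{c}T_m^{\beta-1}}{\log x},
\end{equation*}
since the main mass of the $\sigma$-integral comes from $\sigma$ near $c+\alpha$. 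On the left vertical segment $\Re(s)=b$, condition (ii) gives $\frac{F(s)}{G(s)}\ll (|t|+1)^{\delta}$, whence this piece contributes $\ll x^{b-\alpha}(T_m^{\delta}+1)$ after integrating $(|t|+1)^\delta/(|t|+1)\,dt$ from $-T_m$ to $T_m$.

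Combining the four error contributions (Perron, top and bottom segments, and left segment) produces exactly the error term stated in the lemma, while the residue sum and $R_{\alpha,s_0}(x)$ give the main terms. The only delicate point is the double-pole calculation when $\alpha=s_0$; everything else is a standard application of Perron's formula plus contour shifting, with the hypotheses (i)--(iii) tailored precisely so each leg of the rectangle is controlled.
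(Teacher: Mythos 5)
Your argument reproduces the paper's proof essentially verbatim: a truncated Perron formula on the line $\Re(s)=c+\alpha$ followed by a contour shift to $\Re(s)=b$ through the rectangle $\mathcal{B}_{T_m}$, collecting residues at the simple zeros of $G$, at the possible pole $s_0$ of $F$, and at $s=\alpha$ (with the correct coalesced double-pole computation when $\alpha=s_0$), and then bounding the left vertical side via condition (ii) and the two horizontal sides via condition (iii). The decomposition, the residue bookkeeping giving $R_{\alpha,s_0}(x)$, and the three error estimates all match the paper's.
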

\begin{proof}
Applying Perron's formula \cite[p. 70, Lemma 3.19]{T}  with $c=1-\alpha+1/\log x$ gives
\begin{equation}\label{107}
\sum_{n\leq x}\dfrac{a_n}{n^\alpha}=\dfrac{1}{2\pi i}\int_{c-iT_m}^{c+iT_m}\dfrac{F(s+\alpha)}{G(s+\alpha)}\dfrac{x^s}{s}ds+O\Big(\dfrac{x^{1-\alpha}\log x}{T_m}+x^{-\alpha}\Big).
\end{equation}
If we replace $s$ by $s-\alpha$ in the integral, we obtain
$$\dfrac{1}{2\pi i}\int_{c-iT_m}^{c+iT_m}\dfrac{F(s+\alpha)}{G(s+\alpha)}\dfrac{x^s}{s}ds=\dfrac{1}{2\pi i}\int_{c+\alpha-iT_m}^{c+\alpha+iT_m}\dfrac{F(s)}{G(s)}\dfrac{x^{s-\alpha}}{s-\alpha}ds.$$
Cauchy's residue theorem and (i) imply
\begin{multline}\label{106}
\dfrac{1}{2\pi i}\int_{c+\alpha-iT_m}^{c+\alpha+iT_m}\dfrac{F(s)}{G(s)}\dfrac{x^{s-\alpha}}{s-\alpha}ds=R_{\alpha, s_0}(x)+
\sum_{\substack{j=1 \\ \rho_j \in \mathcal{B}_{T_m}}}^{J}
\dfrac{F(\rho_j)x^{\rho_j-\alpha}}{(\rho_j-\alpha)G'(\rho_j)}\\
+\dfrac{1}{2\pi i}\left(\int_{c+\alpha-iT_m}^{b-iT_m}+\int_{b-iT_m}^{b+iT_m}+\int_{b+iT_m}^{c+\alpha+iT_m}\right)\dfrac{F(s)}{G(s)}\dfrac{x^{s-\alpha}}{s-\alpha}ds,
\end{multline}
where $R_{\alpha, s_0}(x)$  equals the sum of the residues at $s=s_0$ and $s=\alpha$, and the sum appears from the residues at the zeros of $G(s)$.
Taking into account the various cases for $\alpha$ and $s_0$, a simple residue calculation yields \eqref{Ralpha}.  
From assumptions (ii) and (iii) we obtain
\setlength\arraycolsep{0.1em}
\begin{eqnarray}\label{126}
\dfrac{1}{2\pi i}\int_{b-iT_m}^{b+iT_m}\dfrac{F(s)}{G(s)}\dfrac{x^{s-\alpha}}{s-\alpha}ds\ll x^{b-\alpha}(T_m^\delta+1),
\end{eqnarray}
\begin{eqnarray}\label{127}
\dfrac{1}{2\pi i}\int_{b+iT_m}^{c+\alpha+iT_m}\dfrac{F(s)}{G(s)}\dfrac{x^{s-\alpha}}{s-\alpha}ds\ll\dfrac{x^{c}T_m^{\beta-1}}{\log x},
\end{eqnarray}
and similarly 
\begin{eqnarray}\label{128}
\dfrac{1}{2\pi i}\int_{c+\alpha-iT_m}^{b-iT_m}\dfrac{F(s)}{G(s)}\dfrac{x^{s-\alpha}}{s-\alpha}ds\ll\dfrac{x^{c}T_m^{\beta-1}}{\log x}.
\end{eqnarray}
The result follows by combining  \eqref{107}, \eqref{106}, \eqref{126}, \eqref{127}, and \eqref{128}.
\end{proof}
In the previous lemma, a convenient sequence $( T_m )_{m \in \mathbb{N}}$ of reals is chosen so that $F(s)/G(s)$ 
is not too large on the contour $\Im(s)=T_m$.   Consequently, in the explicit formula for $\sum_{n \le x} a_n n^{-\alpha}$, 
the sum over $\rho_j$ is constrained by the condition $|\Im(\rho_j)| \le T_m$. 
 The next lemma allows us to replace this condition by $|\Im(\rho_j)| \le T$ for any $T \ge 1$.  
\begin{lemma}\label{102}
Let $(z_n)_{n \in \mathbb{N}} \subset\mathbb{C}$ and $( \lambda_n )_{n \in \mathbb{N}}  \subset\mathbb{R}^{+}$ be sequences and 
let $x,c_1$, and $c_2$ be positive reals. 
Let $T,T' \in [1,\infty)$  such that $|T-T'| \le 1$.  Assume that for $t \ge 1$ we have 
\begin{equation}\label{104}
\sum_{\lambda_n\leq t}|z_n|^2\ll t^{c_1}
\end{equation}
and
\begin{equation}\label{103}
\sum_{t < \lambda_n \le t+1}1\ll (\log t)^{c_2}.
\end{equation}
Then 
\begin{equation*}\label{400}
\sum_{\lambda_n\leq T'}\dfrac{z_nx^{\frac{1}{2}+i\lambda_n}}{\frac{1}{2}+i\lambda_n}=\sum_{\lambda_n\leq T}\dfrac{z_nx^{\frac{1}{2}+i\lambda_n}}{\frac{1}{2}+i\lambda_n}+O\Big(x^{\frac{1}{2}} T^{(c_1-2)/2}(\log T)^{c_2/2}\Big).
\end{equation*}
\end{lemma}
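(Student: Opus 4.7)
The plan is straightforward: the two sums on the left and right of the claimed identity agree except on the symmetric difference of the ranges $\lambda_n\le T$ and $\lambda_n\le T'$, which lies in an interval of length at most $1$, so the whole matter reduces to estimating a very short dyadic sum.

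First I would, without loss of generality, assume $T'\ge T$ (the opposite case is identical after moving the error term to the other side). Then the difference of the two sums equals
\[
\sum_{T<\lambda_n\le T'}\dfrac{z_n x^{\frac{1}{2}+i\lambda_n}}{\frac{1}{2}+i\lambda_n}.
\]
Since $|x^{\frac{1}{2}+i\lambda_n}|=x^{1/2}$ and $|\tfrac{1}{2}+i\lambda_n|\ge \lambda_n\ge T\ge 1$, bounding in absolute value and enlarging the range from $(T,T']$ to $(T,T+1]$ (permissible since $|T'-T|\le 1$) gives
\[
\Bigl|\sum_{T<\lambda_n\le T'}\dfrac{z_n x^{\frac{1}{2}+i\lambda_n}}{\frac{1}{2}+i\lambda_n}\Bigr|
\ll x^{1/2}\sum_{T<\lambda_n\le T+1}\dfrac{|z_n|}{\lambda_n}.
\]

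Next I would apply the Cauchy--Schwarz inequality to the last sum, writing
\[
\sum_{T<\lambda_n\le T+1}\dfrac{|z_n|}{\lambda_n}
\le \Bigl(\sum_{T<\lambda_n\le T+1}\dfrac{1}{\lambda_n^{2}}\Bigr)^{\!1/2}
\Bigl(\sum_{T<\lambda_n\le T+1}|z_n|^{2}\Bigr)^{\!1/2}.
\]
Hypothesis \eqref{103} yields $\sum_{T<\lambda_n\le T+1}1\ll(\log T)^{c_2}$, and $\lambda_n\ge T$ in the range of summation, so the first factor is $\ll (\log T)^{c_2/2}/T$. Hypothesis \eqref{104} applied at $t=T+1$ (and the trivial extension of summation to $\lambda_n\le T+1$) gives the second factor as $\ll T^{c_1/2}$. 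Multiplying yields $T^{(c_1-2)/2}(\log T)^{c_2/2}$, and inserting the $x^{1/2}$ factor gives the desired bound.

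The argument involves no subtlety: the only ideas are to localize the difference of the two sums to a short window via $|T-T'|\le 1$, exploit the common lower bound $\lambda_n\ge T$, and then split the resulting sum by Cauchy--Schwarz so that the two hypotheses can each be applied once. Consequently there is no real obstacle; the only mild point to be careful about is ensuring that the extension from $(T,T']$ to $(T,T+1]$ and the use of the hypotheses at $t=T$ (rather than $t=T+1$) produce no spurious factors, which is clear since $\log(T+1)\ll\log T$ and $(T+1)^{c_1}\ll T^{c_1}$ for $T\ge 1$.
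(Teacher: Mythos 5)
Your proof is correct and follows essentially the same strategy as the paper: localize the difference to an interval of length at most $1$ via $|T-T'|\le 1$, pull out $x^{1/2}$, and apply Cauchy--Schwarz so that \eqref{104} and \eqref{103} each contribute one factor. The only cosmetic difference is how the Cauchy--Schwarz split is organized (you pair $1/\lambda_n$ against $|z_n|$, while the paper pairs $z_n/(\tfrac12+i\lambda_n)$ against $1$), but both routes produce the identical bound.
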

\begin{proof}
We begin by assuming $T-1 \le T' \leq T$. By the Cauchy-Schwarz inequality 
\begin{equation}\label{129}
\bigg|\sum_{T'<\lambda_n\leq T}\dfrac{z_nx^{\frac{1}{2}+i\lambda_n}}{\frac{1}{2}+i\lambda_n}\bigg|\leq x^{\frac{1}{2}}\bigg(\sum_{T'<\lambda_n\leq T}\bigg|\dfrac{z_n}{\frac{1}{2}+i\lambda_n}\bigg|^2\bigg)^{1/2}\bigg(\sum_{T'<\lambda_n\leq T}1\bigg)^{1/2}
 \ll x^{\frac{1}{2}} T^{\frac{c_1-2}{2}} (\log T)^{c_2/2}
\end{equation}
by  \eqref{104} and  \eqref{103}.  In the case $T < T' \le T+1$, we obtain the same bound as  \eqref{129}. 
\end{proof}
\noindent We now prove Corollary \ref{132}.  In each part of this corollary, we shall apply Corollary \ref{11}(b) to establish the 
existence of the limiting distribution.  
\medskip\par
\noindent{\bf (i) Weighted Sums of the M\"{o}bius Function.}
In this proof we assume the Riemann hypothesis and assumption \eqref{330}.  We 
shall show that $E_2(y,\alpha)$, defined in  \eqref{115}, possesses a limiting distribution. 
We start by establishing an explicit formula for 
$$M_\alpha(x)=\sum_{n\leq x}\dfrac{\mu(n)}{n^\alpha}.$$
We first consider the case $\alpha\neq0$. Let $0< b<\min(1/2,\alpha)$ and $0<\varepsilon< 1/2-b$. 
Under the assumption of the Riemann hypothesis, 
there exists a sequence $(T_m )_{m \in \mathbb{N}}$, where $T_m\in[m-1,m]$, such that 
\begin{equation} \label{Tm}
  |\zeta(\sigma+iT_m)|^{-1} \ll T_m^\varepsilon
\end{equation}
uniformly for $-1\leq\sigma\leq2$ (see \cite[pp. 357--358]{T}). Moreover, for any $\varepsilon>0$, we have 
\[
  |\zeta(b+it)|^{-1} \ll |t|^{-1/2+b+\varepsilon}
\]  
for $|t|\geq1$ (see \cite[Corollary 10.5 and Theorems 13.18 and 13.23]{MontVan}).   
By taking $F(s)=1$, $G(s)=\zeta(s)$, $a_n=\mu(n)$, $\beta=\varepsilon$, and $\delta=-1/2+b+\varepsilon$ in Lemma \ref{105} we derive
\begin{equation}\label{109}
M_\alpha(x)= \frac{1}{\zeta(\alpha)}+\sum_{|\gamma|\leq T_m}\dfrac{x^{\rho-\alpha}}{(\rho-\alpha)\zeta'(\rho)}+O\left(\dfrac{x^{1-\alpha}\log x}{T_m}+\dfrac{x^{1-\alpha}}{T_m^{1-\varepsilon}\log x}+x^{b-\alpha} \right),
\end{equation}
where $\rho$ ranges over the non-trivial zeros of $\zeta(s)$. 
Let $T \ge 1$ and $m \ge 1$ be the natural number  such that $T\in[m-1,m]$. 
Label the non-trivial zeros of $\zeta(s)$ with positive imaginary part in non-decreasing order by $( \rho_n )_{n \in \mathbb{N}}$. 
An application of Lemma \ref{102} with  $\lambda_n=\Im(\rho_n)$, $z_n=\zeta'(\rho_n)^{-1}$,
$c_1=\theta$, $c_2=1$, $\beta=1/2$, $T$, and $T'=T_m$ implies that
\begin{equation}\label{134}
\sum_{|\gamma|\leq T_m}\dfrac{x^{\rho-\alpha}}{(\rho-\alpha)\zeta'(\rho)}=\sum_{|\gamma|\leq T}\dfrac{x^{\rho-\alpha}}{(\rho-\alpha)\zeta'(\rho)}+O\big(x^{1/2-\alpha}T^{(\theta-2)/2}(\log T)^{1/2}\big).
\end{equation}
Substituting \eqref{134} in \eqref{109}, for $\alpha\neq0$, we have
\begin{multline}\label{108}
M_\alpha(x)= \frac{1}{\zeta(\alpha)}+\sum_{|\gamma|\leq T}\dfrac{x^{\rho-\alpha}}{(\rho-\alpha)\zeta'(\rho)}\\+O\left(\dfrac{x^{1-\alpha}\log x}{T}+\dfrac{x^{1-\alpha}}{T^{1-\varepsilon}\log x}+x^{1/2-\alpha}\left(T^{\theta-2}\log T\right)^{1/2}+x^{b-\alpha}   \right),
\end{multline}
valid for $x >1$ and $T \ge 1$. If $\alpha=0$, we let $0<b<1/2$. Then similarly we have
\begin{equation}
\label{M0}
M_0(x)=\sum_{|\gamma|\leq T}\dfrac{x^{\rho}}{\rho\zeta'(\rho)}
+O\left(\dfrac{x\log x}{T}+\dfrac{x}{T^{1-\varepsilon}\log x}
+x^{1/2}\left(T^{\theta-2}\log T\right)^{1/2}+x^b   \right).
\end{equation}
We now analyze $E_2(y,\alpha)$ in the cases $\alpha \in (0,1/2)$, $\alpha \in (1/2,1]$, $\alpha =0$, and $\alpha =1/2$. 

For $0<\alpha<1/2$, by \eqref{108}, for $X\geq 1$ and $y>0$,  we have
\begin{multline*}
E_2(y,\alpha)=\dfrac{1}{e^{y(1/2-\alpha)}\zeta(\alpha)}+e^{y(-1/2+\alpha)}\sum_{|\gamma|\leq X}\dfrac{e^{y(\rho-\alpha)}}{(\rho-\alpha)\zeta'(\rho)}\\
+O\left(\dfrac{ye^{y/2}}{X}+\dfrac{e^{y/2}}{yX^{1-\varepsilon}}+\left(X^{\theta-2}\log X\right)^{1/2}+\dfrac{1}{e^{y(1/2-b)}}\right).
\end{multline*}
Thus
$$E_2(y,\alpha)=\Re\bigg(\sum_{0<\gamma\leq X}\dfrac{2e^{iy\gamma}}{(\rho-\alpha)\zeta'(\rho)}\bigg)+\mathcal{E}_{\mu,\alpha}(y,X),$$
where 
$$\mathcal{E}_{\mu,\alpha}(y,X)=O\left(\dfrac{ye^{y/2}}{X}+\dfrac{e^{y/2}}{yX^{1-\varepsilon}}+\left(X^{\theta-2}\log X\right)^{1/2}+\dfrac{1}{e^{y(1/2-\alpha)}}\right).$$
Note that in this case the term $e^{y(\alpha-1/2)}$ in $\mathcal{E}_{\mu,\alpha}(y,X)$ comes from the 
term $e^{y(\alpha-1/2)}/\zeta(\alpha)$ in $E_2(y,\alpha)$, since we chose $b<\alpha$.

For $\frac{1}{2} < \alpha \le 1$, we recall
that $E_2(y,\alpha)=e^{y(-1/2+\alpha)}\big(M_\alpha(e^y)-1/\zeta(\alpha)\big)$. 
By  \eqref{108} and by pairing conjugate zeros, we obtain  
\begin{align*}
E_2(y,\alpha)
&=\Re\bigg(\sum_{0<\gamma\leq X}\dfrac{2e^{iy\gamma}}{(\rho-\alpha)\zeta'(\rho)}\bigg)+\mathcal{E}_{\mu,\alpha}(y,X),
\end{align*}
for $X \ge 1$ and $y > 0$, where 
\begin{equation}\label{55}
\mathcal{E}_{\mu,\alpha}(y,X)=O\left(\dfrac{ye^{y/2}}{X}+\dfrac{e^{y/2}}{yX^{1-\varepsilon}}+\left(X^{\theta-2}\log X\right)^{1/2}+\dfrac{1}{e^{y(1/2-b)}}\right).
\end{equation}

For $\alpha=0$, from \eqref{M0} we have
$$E_2(y,0)= \Re\bigg(\sum_{0<\gamma\leq X}\dfrac{2e^{iy\gamma}}{\rho\zeta'(\rho)}\bigg)+\mathcal{E}_{\mu,0}(y,X),$$
where $\mathcal{E}_{\mu,0}(y,X)$ satisfies \eqref{55}.

Finally, for $\alpha=1/2$, \eqref{108} implies
$$E_2(y,1/2)= \frac{1}{\zeta(\tfrac{1}{2})}+\Re\bigg(\sum_{0<\gamma\leq X}\dfrac{2e^{iy\gamma}}{(\rho-1/2)\zeta'(\rho)}\bigg)+\mathcal{E}_{\mu,1/2}(y,X),$$
where $\mathcal{E}_{\mu,1/2}(y,X)$ is bounded as \eqref{55}. 

Note that $\mathcal{E}_{\mu,\alpha}(y,e^Y)$ satisfies \eqref{9} for $y_0>0$, for any $\alpha\in[0,1]$.
Setting $r_n=2/(\rho_n-\alpha) \zeta'(\rho)$ and $\lambda_n =\Im(\rho_n)$, it follows from
\eqref{330} that
$$ \sum_{\lambda_n \le T} \lambda_n^2 |r_n|^2 =
\sum_{0<\gamma\leq T}\dfrac{4\gamma^2}{|(\rho-\alpha)\zeta'(\rho)|^2}\ll T^\theta.$$
Thus Corollary \ref{11}(b) implies that, under the assumptions of the Riemann hypothesis for $\zeta(s)$ and \eqref{330}, $E_2(y,\alpha)$ has a limiting distribution.
\medskip\par
\noindent{(\bf ii) Weighted Sums of the Liouville Function.}
In this part, we show that $E_3(y,\alpha)$, defined by \eqref{58}, possesses a limiting distribution. 
We begin by establishing an explicit formula for $L_\alpha(x)=\sum_{n \le x} \lambda(n) n^{-\alpha}$.
Assume the Riemann hypothesis for $\zeta(s)$ and \eqref{330}. 
For $\alpha\in(0,1]$ and $x>1$, let
\begin{equation*}
R_{\alpha, s_0}(x)=\left\{
\begin{array}{lll}
\frac{x^{1/2-\alpha}}{(1-2\alpha)\zeta(1/2)}+\frac{\zeta(2\alpha)}{\zeta(\alpha)}&\quad\mbox{if}&\quad\alpha\neq1/2,\\
\frac{\log x}{2\zeta(1/2)}+\frac{\gamma_0}{\zeta(1/2)}-\frac{\zeta^\prime (1/2)}{2\zeta(1/2)^2}&\quad\mbox{if}&\quad\alpha=1/2,
\end{array}
\right.
\end{equation*}
where $\gamma_0$ is Euler's constant. Let $0<\epsilon<b<\min(1/4,\alpha)$.  Then we have 
\[
\Big|\frac{\zeta(2(b+it))}{\zeta(b+it)} \Big| \ll|t|^{-b+\varepsilon}
\]
 for all $|t|\geq1$, and 
 \[
  \Big| \frac{\zeta(2(\sigma+iT_m))}{\zeta(\sigma+iT_m)} \Big| \ll T_m^{1/2-2b+\varepsilon}
 \]
  uniformly for $b\leq\sigma\leq c+\alpha$ (see \cite[Corollary 10.5 and Theorems 13.18 and 13.23]{MontVan}), 
  where $(T_m)_{m \in \mathbb{N}}$ is the sequence introduced in \eqref{Tm}. Set $F(s)=\zeta(2s)$, $G(s)=\zeta(s)$, $z_m=\lambda(m)$, $\beta=1/2-2b+\varepsilon$, and $\delta=-b+\varepsilon$. 
Then if $\alpha\neq0$, Lemmas \ref{105} and \ref{102} imply that, for $x>1$ and $T\geq 1$,
\begin{multline}\label{110}
L_\alpha(x)=R_{\alpha, s_0}(x)+\sum_{|\gamma|\leq T}\dfrac{x^{\rho-\alpha}}{\rho-\alpha}\dfrac{\zeta(2\rho)}{\zeta'(\rho)}\\+O\left(\dfrac{x^{1-\alpha}\log x}{T}+\dfrac{x^{1-\alpha}T^{-1/2-2b+\varepsilon}}{\log x}+x^{1/2-\alpha}\left(T^{\theta-2}\log T\right)^{1/2}+x^{b-\alpha}\right).
\end{multline}
If $\alpha=0$, we let $0<\epsilon<b<1/4$. Similarly, we have 
\begin{equation}\label{1000}
L_0(x)=\frac{x^{{1}/{2}}}{\zeta(1/2)}+\sum_{|\gamma|\leq T}\dfrac{x^{\rho}}{\rho}\dfrac{\zeta(2\rho)}{\zeta'(\rho)}
+O\left(\dfrac{x\log x}{T}+\dfrac{xT^{-1/2-2b+\varepsilon}}{\log x}+x^{1/2}
\left(T^{\theta-2}\log T\right)^{1/2}+x^{b}\right).
\end{equation}
For $\alpha\in[0,1]$, let
\begin{equation*}\label{59}
C_\alpha=\left\{
\begin{array}{lll}
\frac{1}{(1-2\alpha)\zeta(1/2)}&\quad\text{if} & \quad0\leq\alpha<1/2\ \text{or}\ 1/2<\alpha\leq1,\\
\frac{\gamma_0}{\zeta(1/2)}-\frac{\zeta^\prime(1/2)}{2\zeta(1/2)^2}&\quad\text{if} & \quad\alpha=1/2.
\end{array}
\right.
\end{equation*}
Then \eqref{110} and \eqref{1000} imply that, for $y>0$ and $X\geq 1$,
\begin{eqnarray*}\label{60}
E_3(y,\alpha)&=&C_\alpha+e^{y(-1/2+\alpha)}\sum_{|\gamma|\leq X}\dfrac{\zeta(2\rho)e^{y(\rho-\alpha)}}{(\rho-\alpha)\zeta'(\rho)}+\mathcal{E}_{\lambda,\alpha}(y,X)\nonumber\\
&=&C_\alpha+\sum_{|\gamma|\leq X}\dfrac{\zeta(2\rho)e^{iy\gamma}}{(\rho-\alpha)\zeta'(\rho)}+\mathcal{E}_{\lambda,\alpha}(y,X)\nonumber\\
&=&C_\alpha+\Re\bigg(\sum_{0<\gamma\leq X}\dfrac{2\zeta(2\rho)e^{iy\gamma}}{(\rho-\alpha)\zeta'(\rho)}\bigg)+\mathcal{E}_{\lambda,\alpha}(y,X),
\end{eqnarray*}
where
$$\mathcal{E}_{\lambda,\alpha}(y,X)\ll\dfrac{ye^{y/2}}{X}+\dfrac{e^{y/2}X^{-1/2-2b+\varepsilon}}{y}
+\left(X^{\theta-2}\log X\right)^{1/2}+\dfrac{1}{e^{y(1/2-b)}}.$$
Observe that \eqref{9} for $y_0>0$ holds for $\mathcal{E}_{\lambda,\alpha}(y,e^Y)$.
Since $r_n=2 \zeta(2 \rho_n)/(\rho_n-\alpha) \zeta'(\rho)$ and $\lambda_n =\Im(\rho_n)$, it follows from
\eqref{330} that
$$
\sum_{\lambda_n \le T} \lambda_n^2 |r_n|^2=
\sum_{0<\gamma\leq T}\dfrac{4\gamma^2|\zeta(2\rho)|^2}{|(\rho-\alpha)\zeta'(\rho)|^2}\ll \sum_{0<\gamma\leq T}\dfrac{4\gamma^2(\log \gamma)^{3/2+\varepsilon}}{|(\rho-\alpha)\zeta'(\rho)|^2}\ll T^\theta(\log T)^{3/2+\varepsilon}.$$
Note that, in the previous inequalities we have used the fact that $\zeta(1+it)=O\big((\log t)^{3/4+\varepsilon}\big)$ (see \cite[Theorem 6.14]{T}). Hence by Corollary \ref{11}(b), under the assumptions of the Riemann hypothesis for $\zeta(s)$ and \eqref{330}, $E_3(y,\alpha)$ has a limiting distribution.
\medskip\par
\noindent{\bf (iii) The Summatory Function of the M\"{o}bius Function in Arithmetic Progressions.}
In this part we prove the existence of a limiting distribution for $E_4(y;q,a)$ defined in \eqref{130}. We first establish an explicit formula for 
$$M(x;q,a):=\sum_{\substack{n \le x \\ n \equiv a \text{ mod } q}}  \mu(n),$$
where $q\geq 2$ and $(a,q)=1$. Let $0<b<1/2$ and $0<\varepsilon < 1/2-b$. Assume the generalized 
Riemann hypothesis for Dirichlet $L$-functions modulo $q$ and \eqref{135}.
An argument analogous to the proof of the existence of the sequence $(T_m)_{m \in \mathbb{N}}$ introduced in \eqref{Tm} may be carried out for Dirichlet $L$-functions. Following the proof of \cite[Theorem 13.22]{MontVan}, we are able to show that the generalized Riemann hypothesis for Dirichlet $L$-functions implies that there is a sequence $(T_{m,\chi})_{m \in \mathbb{N}}$, where $T_{m,\chi}\in[m-1,m]$, such that 
$$|L(\sigma+iT_{m,\chi},\chi)|^{-1} \ll T_{m,\chi}^\varepsilon
$$ 
uniformly for $-1\leq \sigma \leq 2$.
Moreover, for any $\varepsilon>0$, we have 
$$
|L(b+it,\chi)|^{-1}\ll|t|^{-1/2+b+\varepsilon}$$
(see \cite[Corollary 10.10 and p. 445, Exercises 8 and 10]{MontVan}).  The orthogonality relation for characters asserts that
$$\dfrac{1}{\varphi(q)}\sum_{\chi\;\mathrm{mod}\;q}\overline{\chi(a)}\chi(n)
=\left\{\begin{array}{ll}1&\quad\text{if}\ n\equiv a\ (\text{mod}\ q),\\0&\quad\text{otherwise,}\end{array}\right.$$
(see \cite[p. 122]{MontVan}). Thus
\begin{equation}\label{111}
M(x;q,a)=\dfrac{1}{\varphi(q)}\sum_{\chi\;\mathrm{mod}\;q}\overline{\chi(a)}\sum_{n\leq x}\mu(n)\chi(n).
\end{equation}
Let $F(s)=1$, $G(s)=L(s,\chi)$, $z_n=\mu(n)\chi(n)$, $\beta=\varepsilon$, $\delta=-1/2+b+\varepsilon$, and $\alpha=0$. Then by applying 
a slight variant of
Lemma \ref{105} which takes into the consideration the potential pole of $1/L(s, \chi)$ at $s=1/2$ and Lemma \ref{102}, we obtain, for $x>1$ and $T \ge 1$,
\setlength\arraycolsep{0.1em}
\begin{eqnarray*}
\sum_{n\leq x}\mu(n)\chi(n)&=&\mathrm{Res}_{s=\frac{1}{2}} \Big( 
\frac{x^s}{L(s, \chi) s} \Big)+\sum_{\substack{{|\gamma_\chi|\leq T} \\ {\gamma_\chi\neq 0}}}\dfrac{x^{\rho_\chi}}{\rho_\chi L'(\rho_\chi,\chi)}\\
&&+O\left(\dfrac{x\log x}{T}+\dfrac{x}{T^{1-\varepsilon}\log x}+x^{1/2}(T^{\theta-2}\log T)^{1/2}+x^b  \right),
\end{eqnarray*}
where $\mathrm{Res}_{s=\frac{1}{2}}(.)$ denote the residue at $s=1/2$.
Substituting this in \eqref{111} implies that, for $x>1$ and $T \ge 1$,
\begin{eqnarray*}
\label{333}
M(x;q,a)&=&\frac{1}{\varphi(q)}\sum_{\substack{ \chi ~\text{mod}~ q \\ L(1/2,\chi)=0}}
\overline{\chi(a)} \mathrm{Res}_{s=\frac{1}{2}} \Big( 
\frac{x^s}{L(s, \chi) s} \Big)+\dfrac{1}{\varphi(q)}\sum_{\chi\;\mathrm{mod}\;q}\overline{\chi(a)}\sum_{\substack{{|\gamma_\chi|\leq T}\\{\gamma_\chi\neq 0}}}\dfrac{x^{\rho_\chi}}{\rho_\chi L'(\rho_\chi,\chi)}\\
&&+O\left(\dfrac{x\log x}{T}+\dfrac{x}{T^{1-\varepsilon}\log x}+x^{1/2}(T^{\theta-2}\log T)^{1/2}+x^b    \right).
\end{eqnarray*}
Assuming the generalized Riemann hypothesis for Dirichlet $L$-functions modulo $q$ and \eqref{135}, it follows that, for $y>0$ and $X\geq 1$, 
\setlength\arraycolsep{0.1em}
\begin{eqnarray}\label{500}
E_4(y;q,a)&=&\frac{1}{\varphi(q)}\sum_{\substack{ \chi ~\text{mod}~ q \\ L(1/2,\chi)=0}}
\overline{\chi(a)} \mathrm{Res}_{s=\frac{1}{2}} \Big( 
\frac{e^{ys}}{L(s, \chi) s} \Big)\nonumber \\
&&+\dfrac{1}{\varphi(q)}\sum_{\chi\;\mathrm{mod}\;q}\overline{\chi(a)}\sum_{\substack{{|\gamma_\chi|\leq X}\\{\gamma_\chi\neq 0}}}\dfrac{e^{iy\gamma_\chi}}{\rho_\chi L'(\rho_\chi,\chi)}
+\mathcal{E}_{\mu,q,a}(y,X),
\end{eqnarray}
where 
\setlength\arraycolsep{0.1em}
\begin{eqnarray*}
\mathcal{E}_{\mu,q,a}(y,X)\ll\dfrac{ye^{y/2}}{X}+\dfrac{e^{y/2}}{yX^{1-\varepsilon}}+\dfrac{(\log X)^{1/2}}{X^{1-\theta/2}}+\dfrac{1}{e^{y(1/2-b)}}.
\end{eqnarray*}
Let $(\lambda_n)_{n\in \mathbb{N}}$ be the non-decreasing sequence that consists of all the numbers $\gamma_\chi>0$ satisfying $L(1/2+i\gamma_\chi,\chi)=0$, for some Dirichlet character $\chi$ mod $q$, and let $(r_n)_{n\in \mathbb{N}}$ be defined as
$$r_n=\dfrac{2\;\overline{\chi_{\lambda_n}(a)}}{\varphi(q)(1/2+i\lambda_n)L'(1/2+i\lambda_n,\chi_{\lambda_n})},$$ 
where $\chi_{\lambda_n}$ is the character which corresponds to $\lambda_n$. We can rewrite \eqref{500} in the form of
$$E_4(y;q,a)=\frac{1}{\varphi(q)}\sum_{\substack{ \chi ~\text{mod}~ q \\ L(1/2,\chi)=0}}
\overline{\chi(a)} \mathrm{Res}_{s=\frac{1}{2}} \Big( 
\frac{e^{ys}}{L(s, \chi) s} \Big)+\Re\bigg(\sum_{\lambda_n<X}r_ne^{iy\lambda_n}\bigg)+\mathcal{E}_{\mu,q,a}(y,X).$$
Observe that \eqref{9} for $y_0>0$ holds for $\mathcal{E}_{\mu,q,a}(y,e^Y)$ and \eqref{135} implies
$$\sum_{\lambda_n\leq T}\lambda_n^2|r_n|^2\ll T^\theta,$$
for $1\leq \theta<3-\sqrt{3}$. Hence Corollary \ref{11}(b) implies that, under the assumptions of the generalized Riemann hypothesis for Dirichlet $L$-functions modulo $q$ and \eqref{135}, $E_4(y;q,a)$ has a limiting distribution.

\subsection*{3.2. Proof of Corollary \ref{118}}$~$
\medskip\par
\noindent{\bf Chebotarev's Density Theorem.}
Let $K/k$ be a normal extension of number fields with corresponding Galois group $G$. We shall consider the squaring function $\mathrm{sq}: G \to G$  given by $\mathrm{sq}(x)=x^2$.
For a conjugacy class $C$ of $G$, let $A_1,\ldots,A_t$ be the conjugacy classes which satisfy $A_i^2\subseteq C$. 
We observe that 
$$\mathrm{sq}^{-1}(C)=\bigcup_{i=1}^tA_i$$
and define 
$$c(G,C)=-1+\frac{|\mathrm{sq}^{-1}(C)|}{|C|}+2\sum_{\chi \ne \chi_0}\overline{\chi(C)}\mathrm{ord}_{s=1/2} L(s,\chi,K/k)$$
where $\chi$ ranges over the irreducible characters of $G$ and $\chi_0$ denotes the trivial character. 
It was proven in  \cite[pp. 71--73]{NgPhd} that 
the generalized Riemann hypothesis and Artin's holomorphy conjecture imply 
that, for $x>1$, $T \ge 1$ and $1\leq j\leq r$,
\begin{multline}\label{100}
\frac{\log x}{\sqrt{x}}\left( \frac{|G|}{|C_j|}\pi_{C_j}(x) - \pi_{k}(x) \right)=\\-c(G,C_j)-\sum_{\chi \ne \chi_0}\overline{\chi(C_j)}\bigg( \sum_{0<|\gamma_{\chi}| \le T}\frac{ x^{i\gamma_{\chi}}}{1/2 + i\gamma_{\chi}}\bigg) + O \left( \frac{x^{1/2} \log^{2}(xT)}{T} +  \frac{1}{\log x} \right),
\end{multline}
where for each $\chi$, $\rho_\chi=1/2+i\gamma_\chi$ 
runs over the non-trivial zeros 
of $L(s,\chi,K/k)$. 
In this formula, the term $c(G,C_j)$ is the number field analogue of the constant term $c(q,a)$ which appears in the Chebyshev bias phenomenon.  
Let $(\lambda_n)_{n \in \mathbb{N}}$ be the non-decreasing sequence that consists of all the numbers $\gamma_\chi>0$ which satisfy $L(1/2+i\gamma_\chi,\chi,K/k)=0$ for some $\chi \ne \chi_0$. 
Suppose that $\chi_n$ is the character which corresponds to $\lambda_n$, and for $1\leq j\leq r$ set 
$r_{j,n}=-2\overline{\chi_n(C_j)}/(1/2+i\lambda_n)$.
Then \eqref{100} implies that
$$E_5^{(j)}(y):=\left(\frac{|G|}{|C_j|}\pi_{C_j}(e^y) - \pi_{k}(e^y)\right)ye^{-y/2}=-c(G,C_j)+\Re\bigg(\sum_{0<\lambda_n\leq X}r_{j,n}e^{iy\lambda_n}\bigg)+\mathcal{E}_{G;C_j}(y,X),$$ 
where 
$$\mathcal{E}_{G;C_j}(y,X)=O\bigg(\frac{e^{y/2} \log^{2}(e^yX)}{X} + \frac{1}{y}\bigg).$$
Observe that Condition \eqref{9} for $y_0>0$ holds for $\mathcal{E}_{G;C_j}(y,e^Y)$ and by \cite[Theorem 5.8]{Iwaniec} we have
$$\sum_{\lambda_n\leq T}\lambda_n^2|r_n|^2\ll\sum_{\lambda_n\leq T}1\ll T\log T.$$
Therefore, Theorem \ref{117} implies that, under the assumptions of generalized Riemann hypothesis and Artin's holomorphy conjecture, 
$\vec{E}_5(y)=\big(E_5^{(1)}(y),\ldots,E_5^{(r)}(y)\big)$ has a limiting distribution. \\

\section{Calculation of the Fourier transform $\widehat{\mu}$}
\begin{proof}[Proof of Theorem \ref{66}]
Let $\vec{r}_m=\big(r_1(\lambda_m),\ldots,r_\ell(\lambda_m)\big)$ and $N\in\mathbb{N}$. 
By Proposition \ref{36}, the vector-valued function
$$\vec{P}(y)=\bigg(c_1+\Re\Big(\sum_{m=1}^Nr_1(\lambda_m)e^{iy\lambda_m}\Big),\ldots,c_\ell+\Re\Big(\sum_{m=1}^Nr_\ell(\lambda_m)e^{iy\lambda_m}\Big)\bigg)$$
has a limiting distribution $\mu_N$. Since $\{\lambda_1, \cdots, \lambda_N\}$ is linearly independent then by the  
Kronecker-Weyl theorem \cite[Chapter 1]{Hlawka} we have
$$\lim_{Y\rightarrow \infty} \frac{1}{Y} \int_{0}^{Y} g\left(\frac{y\lambda_1}{2\pi}, \cdots, \frac{y\lambda_N}{2\pi}\right) dy =\int_{\mathbb{T}^N} g(a) d\omega,$$
where $g: \mathbb{R}^N \rightarrow \mathbb{R}$ is any continuous function of period $1$ in each of its variables and 
$d\omega(\theta_1,\dots,\theta_N)$ is the normalized Haar measure on $\mathbb{T}^N$ which is equal to the Lebesgue measure $d\theta_1\dots d\theta_N$ on $\mathbb{T}^N$. Hence, 
by taking $f(t_1,\ldots,t_\ell)=\exp(-i{\scriptstyle\sum_{k=1}^\ell\xi_k t_k})$ and $A=\mathbb{T}^N$ in \eqref{348}, we obtain
\setlength\arraycolsep{0.1em}
\begin{eqnarray}\label{F-T:1}
&&\int_{\mathbb{R}^\ell}e^{-i{\scriptstyle\sum_{k=1}^\ell\xi_k t_k}}d\mu_N(t_1,\ldots,t_\ell)\nonumber\\
&&\quad\quad\quad\quad\quad\quad=\int_{\mathbb{T}^N}\exp\bigg(-i{\textstyle\sum_{k=1}^\ell}\Big[c_k+\Re\Big({\textstyle\sum_{m=1}^N}r_k(\lambda_m)e^{2\pi i\theta_m}\Big)\Big]\xi_k\bigg)d\omega(\theta_1,\ldots,\theta_N)\nonumber\\
&&\quad\quad\quad\quad\quad\quad=e^{-i{\scriptstyle \sum_{k=1}^\ell c_k\xi_k}}\int_{\mathbb{T}^N}\exp\bigg(-i\Re\Big({\textstyle\sum_{m=1}^N}\big(\vec{r}_m\cdot\vec{\xi}\big)e^{2\pi i\theta_m}\Big)\bigg)d\theta_1\dots d\theta_N\nonumber\\
&&\quad\quad\quad\quad\quad\quad=e^{-i{\scriptstyle \sum_{k=1}^\ell c_k\xi_k}}\times\prod_{m=1}^N\int_0^1\exp\left(-i\Re\left(\big(\vec{r}_m\cdot\vec{\xi}\big)e^{2\pi i\theta}\right)\right)d\theta.
\end{eqnarray}
Thus, in view of \eqref{353} and \eqref{F-T:1} we deduce that
\setlength\arraycolsep{0.1em}
\begin{eqnarray*}
\hat{\mu}(\vec{\xi})=\int_{\mathbb{R}^\ell}e^{-i{\scriptstyle\sum_{k=1}^\ell\xi_k t_k}}d\mu(t_1,\ldots,t_\ell)&&=\lim_{N\to\infty}\int_{\mathbb{R}^\ell}e^{-i{\scriptstyle\sum_{k=1}^\ell\xi_k t_k}}d\mu_N(t_1,\ldots,t_\ell)\\
&&=e^{-i{\scriptstyle \sum_{k=1}^\ell c_k\xi_k}}\times\prod_{m=1}^\infty\int_0^1\exp\left(-i\Re\left(\big(\vec{r}_m\cdot\vec{\xi}\big)e^{2\pi i\theta}\right)\right)d\theta.
\end{eqnarray*}
If $\vec{r}_m\cdot\vec{\xi}\neq0$, then 
\setlength\arraycolsep{0.1em}
\begin{eqnarray}\label{F-T:2}
\int_0^1\exp\left(-i\Re\left(\big(\vec{r}_m\cdot\vec{\xi}\big)e^{2\pi i\theta}\right)\right)d\theta&&=\int_0^1\exp\left(-i\Re\left(|\vec{r}_m\cdot\vec{\xi}|e^{i(2\pi\theta+\mathrm{arg}(\vec{r}_m\cdot\vec{\xi}))}\right)\right)d\theta\nonumber\\
&&=\int_0^1\exp\left(-i|\vec{r}_m\cdot\vec{\xi}|\cos\big(2\pi\theta+\mathrm{arg}\big(\vec{r}_m\cdot\vec{\xi}\big)\big)\right)d\theta\nonumber\\
&&=\int_{\mathrm{arg}(\vec{r}_m\cdot\vec{\xi})/2\pi}^{1+\mathrm{arg}(\vec{r}_m\cdot\vec{\xi})/2\pi}\exp\big(-i|\vec{r}_m\cdot\vec{\xi}|\cos(2\pi t)\big)dt\nonumber\\
&&=\int_0^1\exp\big(-i|\vec{r}_m\cdot\vec{\xi}|\cos(2\pi t)\big)dt\nonumber\\
&&=J_0\big(\big|{\textstyle\sum_{k=1}^\ell}r_k(\lambda_m)\xi_k\big|\big).
\end{eqnarray}
If $\vec{r}_m\cdot\vec{\xi}=0$, then \eqref{F-T:2} holds trivially. Hence
\begin{equation*}
\hat{\mu}(\vec{\xi})=e^{-i{\scriptstyle \sum_{k=1}^\ell c_k\xi_k}}\times\prod_{m=1}^\infty J_0\big(\big|{\textstyle\sum_{k=1}^\ell}r_k(\lambda_m)\xi_k\big|\big).
\end{equation*}
\end{proof}

\medskip\par

\noindent {\bf Acknowledgements}.   The authors thank Daniel Fiorilli, Peter Humphries, and Youness Lamzouri for their comments on  this article. 
We are also grateful to Abbas Momeni for providing us with the reference for the proof of Lemma \ref{402}.

\end{document}